\documentclass[
final
, nomarks
]{dmtcs-episciences}


\usepackage[utf8]{inputenc}
\usepackage{subfigure}

\usepackage{tikz}
\usetikzlibrary{arrows,calc}

\tikzstyle{arc}=[->,shorten <=3pt, shorten >=3pt,
                 >=stealth, line width=1.1pt]
\tikzstyle{edge}=[shorten <=1pt, shorten >=1pt,
                  >=stealth, line width=0.8pt]
\tikzstyle{curveEdge}=[shorten <=1pt, shorten >=1pt, >=stealth, bend right=30,
                       bend left=30, line width=1pt]
\tikzstyle{vertex}=[circle, fill=white, draw,
                    minimum size=5pt,
                    inner sep=0pt, outer sep=0pt]
\tikzstyle{node}=[circle, fill=white, draw,
                    minimum size=5pt,
                    inner sep=1pt, outer sep=0pt]

\tikzstyle{blackV}=[circle, fill=black,
                    minimum size=5pt,
                    inner sep=0pt, outer sep=0pt]

\usepackage{amsmath}
\usepackage{amssymb}

\usepackage{amsthm}

\usepackage[capitalize]{cleveref}

\usepackage{multicol}
\usepackage{scalerel}
  \def\Spider{\scalerel*{\includegraphics{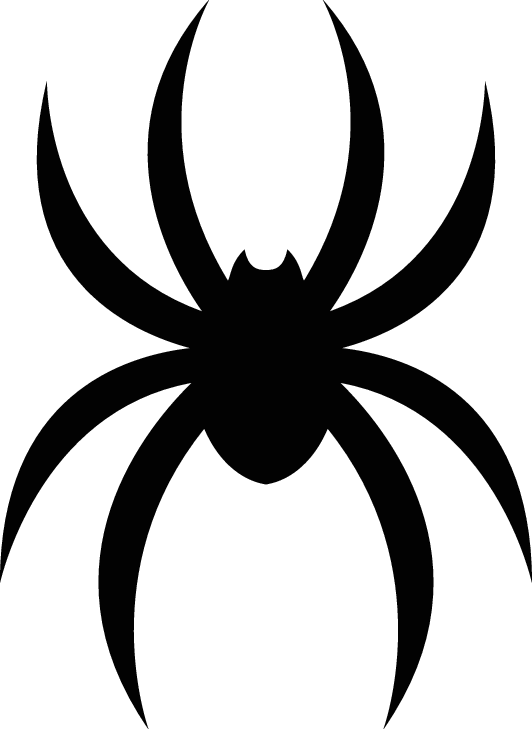}}{X}}
\newcommand{\spider}{\mathbin{\Spider{}}}

\newtheorem{theorem}{Theorem}
\newtheorem{lemma}[theorem]{Lemma}
\newtheorem{corollary}[theorem]{Corollary}
\newtheorem{proposition}[theorem]{Proposition}
\newtheorem{remark}[theorem]{Remark}
\newtheorem{problem}{Problem}

%


\author[Fernando Esteban Contreras-Mendoza and C\'esar
  Hern\'andez-Cruz]{Fernando Esteban Contreras-Mendoza \and C\'esar
  Hern\'andez-Cruz} \title[Polarity variants on cograph
  superclasses]{$2$-polarity and algorithmic aspects of polarity variants on
  cograph superclasses\thanks{Supported by grants UNAM-PAPIIT IA101423,
  SEP-CONACYT A1-S-8397, and CONACYT FORDECYT-PRONACES/39570/2020}}
\affiliation{
  Facultad de Ciencias, Universidad Nacional Aut\'onoma de M\'exico, M\'exico}
\keywords{$P_4$-sparse graph, $P_4$-extendible graph, cograph, polar
graph, $2$-polar graph, graph algorithms}

\begin{document}
\publicationdata
{vol. 26:3}
{2024}
{11}
{10.46298/dmtcs.11479}
{2023-06-19; 2023-06-19; 2024-04-30}
{2024-09-09}
\maketitle
\begin{abstract}
    A graph $G$ is said to be an $(s, k)$-polar graph if its vertex set admits a
    partition $(A, B)$ such that $A$ and $B$ induce, respectively, a complete
    $s$-partite graph and the disjoint union of at most $k$ complete graphs.
    Polar graphs and monopolar graphs are defined as $(\infty, \infty)$- and
    $(1, \infty)$-polar graphs, respectively, and unipolar graphs are those
    graphs with a polar partition $(A, B)$ such that $A$ is a clique.
  
    The problems of deciding whether an arbitrary graph is a polar graph or a
    monopolar graph are known to be NP-complete. In contrast, deciding whether a
    graph is a unipolar graph can be done in polynomial time. In this work we
    prove that the three previous problems can be solved in linear time on the
    classes of $P_4$-sparse and $P_4$-extendible graphs, generalizing analogous
    results previously known for cographs.
  
    Additionally, we provide finite forbidden subgraph characterizations for
    $(2,2)$-polar graphs on $P_4$-sparse and $P_4$-extendible graphs, also
    generalizing analogous results recently obtained for the class of cographs.
\end{abstract}

\section{Introduction}

All graphs in this paper are finite and simple; for basic terminology not
defined here we refer the reader to the beautiful book of Bondy and Murty
\cite{bondy2008}. For graphs $G$ and $H$, we denote that $H$ is an induced
subgraph of $G$ by $H \le G$. Given a family of graphs $\mathcal{H}$, we say
that $G$ is \textit{$\mathcal{H}$-free} if $G$ does not have induced subgraphs
isomorphic to any graph $H \in \mathcal{H}$; accordingly, we say that $G$ is an
\textit{$H$-free graph} if it is $\{H\}$-free. A property of graphs is
\textit{hereditary} if it is closed under taking induced subgraphs. Given a
hereditary property $\mathcal{P}$ of graphs, a \textit{minimal
$\mathcal{P}$-obstruction} is a graph $G$ that does not have the property
$\mathcal{P}$ but such that any vertex-deleted subgraph of $G$ does.

A \textit{$k$-cluster} is the disjoint union of at most $k$ complete graphs; a
\textit{cluster} is a $k$-cluster for some positive integer $k$. It is easy to
verify that $k$-clusters coincide with $\{\overline{K_{k+1}}, P_3\}$-free
graphs, while clusters are precisely $P_3$-free graphs. A \textit{complete
$k$-partite graph} is the complement of a $k$-cluster, or equivalently, a
$\{K_{k+1}, \overline{P_3}\}$-free graph; a \textit{complete multipartite graph}
is the complement of a cluster, i.e., a $\overline{P_3}$-free graph. An
\textit{$(s,k)$-polar partition} of a graph $G$ is a partition of $V_G$ in two
possible empty sets $A$ and $B$ such that $G[A]$ is a complete $s$-partite
graph, and $G[B]$ is a $k$-cluster. If $G$ admits an $(s,k)$-polar partition we
say that it is an \textit{$(s,k)$-polar graph}. A $(k,k)$-polar partition is
simply referred as a \textit{$k$-polar partition}, and a graph which admits such
partition is a \textit{$k$-polar graph}. A $1$-polar graph is commonly called a
\textit{split graph}; in the classic article of Foldes and Hammer
\cite{foldesGTC1977}, split graphs were characterized to be
$\{2K_2,C_4,C_5\}$-free graphs. If we replace $s$ or $k$ by $\infty$, it means
that the number of components of $\overline{G[A]}$ or $G[B]$, respectively, is
unbounded. An $(\infty, \infty)$-polar partition of a graph is simply called a
\textit{polar partition}, and a graph with such partition is a \textit{polar
graph}. A graph with polar partition $(A, B)$ such that $A$ is an independent
set (respectively, a clique) is called a \textit{monopolar graph} (resp. a
\textit{unipolar graph}). Naturally, the polar partitions associated to
monopolar and unipolar graphs are referred as monopolar and unipolar partitions,
respectively.

Graphs without induced paths on four vertices are known as \textit{cographs}. A
graph such that any set of five vertices induces at most one $P_4$ is called a
\textit{$P_4$-sparse graph}, and a graph such that, for any vertex subset $W$
inducing a $P_4$ there exists at most one vertex $v \notin W$ belonging to a
$P_4$ which shares vertices with $W$, is a \textit{$P_4$-extendible graph}.

In \cite{contrerasCogGen1} it was proved that any hereditary property of graphs
restricted to $P_4$-sparse graphs and $P_4$-extendible graphs can be
characterized by a finite set of forbidden induced subgraphs. In the same paper,
such characterizations for the properties of having a polar partition, a
monopolar partition, a unipolar partition, and an $(s,1)$-polar partition for
any fixed positive integer $s$ were given. In this paper, we continue with the
work started in \cite{contrerasCogGen1}, establishing linear-time algorithms to
find maximum subgraphs associated with properties related to polarity in
$P_4$-sparse and $P_4$-extendible graphs, and giving forbidden subgraph
characterizations for $P_4$-sparse and $P_4$-extendible graph which admit a
$2$-polar partition. For the sake of length we invite the reader to read
\cite{contrerasCogGen1} where a discussion on the relevance of the topic of this
paper can be found.

The rest of the paper is organized as follows. \Cref{sec:cogGen} is devoted to a
brief introduction of $P_4$-sparse and $P_4$-extendible graphs. In
\Cref{sec:2obsmin} we give complete lists of minimal $P_4$-sparse and
$P_4$-extendible $2$-polar obstructions, while in \Cref{sec:maxSubgraphs} we
provide algorithms for finding maximum polar, unipolar, and monopolar subgraphs
in both $P_4$-sparse and $P_4$-extendible graphs. Conclusions and some open
problems are given in \Cref{sec:conc}.


\section{Cograph generalizations}
\label{sec:cogGen}

We use $G + H$ to denote the disjoint union of the graphs $G$ and $H$;
accordingly, we denote by $nG$ the disjoint union of $n$ copies of the graph
$G$. The join of $G$ and $H$, defined as the graph $\overline{\overline{G} +
\overline{H}}$, will be denoted by $G \oplus H$. We say that two vertex subsets
are \textit{completely adjacent} if every vertex of one of them is adjacent to
any vertex of the other. Similarly, if no vertex of one of them is adjacent to a
vertex of the other, we say that those vertex subsets are \textit{completely
nonadjacent}. The following proposition include some characterizations for
cographs which are particularly relevant for this work.

\begin{theorem}\cite{corneilDAM3}
\label{theo:cogChar}
    Let $G$ be a graph. The following statements are equivalent.
    \begin{enumerate}
    \item $G$ is a $P_4$-free graph (i.e. a cograph).

    \item $G$ can be constructed from trivial graphs by means of join and
        disjoint union operations.

    \item For any nontrivial induced subgraph $H$ of $G$, either $H$ or
        $\overline{H}$ is disconnected.
    \end{enumerate}
\end{theorem}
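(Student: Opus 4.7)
The plan is to establish the equivalences via the cyclic chain $(2) \Rightarrow (1) \Rightarrow (3) \Rightarrow (2)$, which keeps each step as clean as possible.

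For $(2) \Rightarrow (1)$, I would proceed by induction on the number of operations in the construction. The key observation is that $P_4$ is connected and self-complementary, so it cannot be expressed as a disjoint union of smaller graphs nor, by complementation, as a join of smaller graphs. Hence if $G = G_1 + G_2$, any induced $P_4$ in $G$ must lie entirely in some $G_i$, which contradicts the inductive hypothesis. The join case reduces immediately to the disjoint union case using $\overline{G_1 \oplus G_2} = \overline{G_1} + \overline{G_2}$ and the fact that $G$ contains an induced $P_4$ if and only if $\overline{G}$ does.

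For $(3) \Rightarrow (2)$, I would induct on $|V(G)|$. The base case $|V(G)| = 1$ is trivial. Otherwise condition $(3)$ gives that $G$ or $\overline{G}$ is disconnected. In the first case, writing $G = G_1 + G_2$ with both summands nontrivial, each $G_i$ is a proper induced subgraph of $G$ and therefore inherits property $(3)$, so the induction hypothesis produces the desired construction of each $G_i$, and hence of $G$. The case where $\overline{G}$ is disconnected is symmetric and yields $G = G_1 \oplus G_2$.

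The main obstacle is the implication $(1) \Rightarrow (3)$, which is the content of Seinsche's theorem. I would argue the contrapositive: if some nontrivial induced subgraph $H$ of $G$ has both $H$ and $\overline{H}$ connected, then $H$ contains an induced $P_4$. The natural attack is by induction on $|V(H)|$. If $\mathrm{diam}(H) \geq 3$, a shortest path of length three is an induced $P_4$ and we are done; otherwise both $H$ and $\overline{H}$ have diameter at most two. Fixing a vertex $v$ of maximum degree and partitioning $V(H)$ into $\{v\}$, $N(v)$, and $M = V(H) \setminus (N(v) \cup \{v\})$, the connectivity of $\overline{H}$ forces $M \neq \emptyset$, and the diameter constraint forces every vertex of $M$ to have a neighbor in $N(v)$; a careful analysis of the adjacencies between these three parts, combined with the connectivity of the subgraph induced by $N(v)$ in $\overline{H}$, then yields four vertices inducing a $P_4$. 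The technical heart of this direction lies in the case analysis ensuring that the four vertices produced are genuinely induced as $P_4$ and not as some other four-vertex graph; this is where I expect to spend the bulk of the work.
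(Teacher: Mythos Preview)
The paper does not prove this theorem; it is quoted from \cite{corneilDAM3} as a known characterization of cographs, with no argument supplied. There is therefore no proof in the paper to compare your proposal against.

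That said, your outline is sound. The implications $(2)\Rightarrow(1)$ and $(3)\Rightarrow(2)$ are routine and your sketches are complete as stated. For $(1)\Rightarrow(3)$, your diameter-and-maximum-degree attack can be pushed through, but you are right that the case analysis is where the difficulty lies, and your sketch does not yet indicate how the induction hypothesis on $|V(H)|$ is actually invoked. A cleaner and more standard route is a direct minimal-counterexample argument: take $H$ minimal among nontrivial graphs with both $H$ and $\overline{H}$ connected and no induced $P_4$, delete any vertex $v$, and apply minimality to $H-v$ to get (up to complementation) that $H-v$ is disconnected with components $C_1,\dots,C_k$. Connectivity of $H$ forces $v$ to have a neighbour in every $C_i$; connectivity of $\overline{H}$ forces $v$ to have a non-neighbour somewhere, say in $C_j$. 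Walking inside $C_j$ from a neighbour of $v$ toward that non-neighbour yields adjacent vertices $u,w\in C_j$ with $vu\in E$, $vw\notin E$; picking any neighbour $x$ of $v$ in some $C_i$ with $i\neq j$ then gives the induced path $x,v,u,w$. This avoids the diameter-two case analysis entirely.
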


It follows from item 3 of the previous theorem that cographs can be uniquely
represented by a rooted labeled tree, its cotree, introduced by Corneil, Lerchs
and Stewart Burlingham in \cite{corneilDAM3}. In \cite{bretscherSIAMJDM22},
Bretscher, Corneil, Habib and Paul, showed that cographs can be recognized, and
its associated cotree can be constructed, in linear time by an algorithm based
on LexBFS. From here, using bottom-up algorithms on their cotrees, many
algorithmic problems which are difficult in general graphs can be efficiently
solved on cographs.

Much of the relevance of cographs comes from real-life applications involving
graph models with just a few induced paths of length three, as discussed by
Corneil, Perl and Stewart Burlingham in \cite{corneilSIAMJC14}. Evidently,
$P_4$-free graphs (cographs) are the most restrictive graph class in this way,
so it becomes important to ask whether a cograph superclass with less
restrictions on the amount of allowed induced $P_4$'s has a behavior similar to
cographs, particularly, whether it allows us to develop efficient algorithms for
solving problems by using a unique tree representation. Next, we briefly
introduce two graph classes which are unlikely to have many induced paths on
four vertices. Such families are known to have unique tree representations
analogous to the cotree, which can be computed in linear time and can be used to
solve some problems in linear time.


\subsection{\texorpdfstring{$P_4$}{P4}-sparse}
\label{sec:P4sparse}

The \textit{$P_4$-sparse graphs} are defined as the graphs such that the
subgraphs induced by any five vertices have at most one induced copy of $P_4$.
Clearly, $P_4$-sparse graphs are precisely the $\{C_5, P_5, \overline{P_5}, P,
\overline{P}, F, \overline{F}\}$-free graphs (see \Cref{fig:extSets}).
Additionally, Jamison and Olariu \cite{jamisonDAM35} provided a connectedness
characterization of $P_4$-sparse graphs based on some special graphs called
spiders, which we now introduce.

A graph $G$ is said to be an \textit{spider} if its vertex set admits a
partition $(S, K, R)$ such that $S$ is an independent set with at least two
vertices, $K$ is a clique, $R$ is completely adjacent to $K$ but completely
nonadjacent to $S$, and there is a bijection $f \colon S \to K$ such that either
$N(s) = \{f(s)\}$ for each $s \in S$ or $N(s) = K -\{f(s)\}$ for each $s \in S$.
For a spider $G = (S, K, R)$ we will say that $S$ is its \textit{legs set}, $K$
is its \textit{body}, and $R$ is its \textit{head}. A \textit{headless spider}
is a spider with empty head. An spider will be called \textit{thin}
(respectively \textit{thick}) if $d(s)=1$ (respectively $d(s) = |K| -1$) for any
$s \in S$. Observe that the complement of a thin spider is a thick spider and
vice versa.

\begin{theorem}\cite{jamisonDAM35}
\label{thm:connCharSparse}
    A graph $G$ is a $P_4$-sparse graph if and only if for every nontrivial
    induced subgraph $H$ of $G$, exactly one of the following statements is
    satisfied
    \begin{enumerate}
        \item $H$ is disconnected.

        \item $\overline{H}$ is disconnected.

        \item $H$ is an spider.
    \end{enumerate}
\end{theorem}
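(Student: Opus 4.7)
The plan is to treat the two directions separately. First, observe that the three cases are mutually exclusive for $|V(H)|\geq 2$: whenever $H$ is disconnected, say with components $C_1,\ldots,C_r$, any two vertices are connected in $\overline{H}$ (directly if they lie in different components, otherwise via a vertex in a third component), so $\overline{H}$ is connected; this rules out simultaneous (1) and (2). Every spider with $|S|\geq 2$ is connected, because each leg attaches to $K$ and $R$ is completely adjacent to $K$; moreover, its complement is a spider with the roles of $S$ and $K$ swapped (thin becoming thick and conversely), hence also connected. Thus (3) is incompatible with (1) and (2). Since $P_4$-sparsity is hereditary, the forward implication reduces to showing that a $P_4$-sparse graph $H$ with $|V(H)|\geq 2$ and both $H$, $\overline{H}$ connected must be a spider.

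To prove this, note that such $H$ is not a cograph by \Cref{theo:cogChar}, hence contains an induced $P_4$, say $\pi=v_1v_2v_3v_4$, with tentative body $K_0=\{v_2,v_3\}$ and legs $S_0=\{v_1,v_4\}$. For every other vertex $w$, I would apply the $P_4$-sparsity condition to $\{v_1,\ldots,v_4,w\}$: enumerating the possible neighborhoods of $w$ on $\pi$ and requiring at most one induced $P_4$ among the five vertices, one finds that $w$ either is completely adjacent to $K_0$ and completely nonadjacent to $S_0$ (a \emph{head} vertex, joining $R$), or extends the $P_4$ by a matched body-leg pair in either a thin or a thick attachment pattern. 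Iterating across all vertices outside $\pi$ assembles the required partition $(S,K,R)$. The main obstacle is the uniformity step: one must forbid the coexistence of thin and thick extensions relative to different vertices, since a well-chosen 5-vertex subset would then contain two induced $P_4$s, violating $P_4$-sparsity. This requires a careful case analysis comparing the adjacencies between a purported thin extension $w_1$ and a thick extension $w_2$, and a parallel check that head vertices are consistent with either global choice.

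For the converse I would argue contrapositively. If $G$ is not $P_4$-sparse then $G$ contains some induced subgraph $H$ isomorphic to one of the seven forbidden graphs $C_5, P_5, \overline{P_5}, P, \overline{P}, F, \overline{F}$ depicted in \Cref{fig:extSets}. A short inspection shows that each of these seven graphs is connected, has connected complement, and is not a spider (in particular, none admits a partition $(S,K,R)$ with $|S|\geq 2$ together with the required bijection between $S$ and $K$). Hence $H$ itself violates the trichotomy, contradicting the hypothesis that every nontrivial induced subgraph of $G$ satisfies one of (1)-(3), and the equivalence follows.
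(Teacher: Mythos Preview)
The paper does not prove this theorem; it is quoted from \cite{jamisonDAM35} without argument. There is therefore no ``paper's own proof'' to compare against. Your outline follows the standard route taken in the original Jamison--Olariu paper: mutual exclusivity of the three alternatives, the converse via the seven forbidden $5$-vertex graphs, and the forward direction by growing a spider structure around an induced $P_4$.

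That said, your forward direction is only a plan, and the part you flag as ``the main obstacle'' is genuinely where the work lies. Analyzing a single vertex $w$ against the base $P_4$ does restrict $N(w)\cap\{v_1,v_2,v_3,v_4\}$, but it does not by itself produce a global partition $(S,K,R)$: you must also show that the collection of ``body'' vertices is a clique, that the ``legs'' form an independent set, that the bijection $f\colon S\to K$ is well-defined across all extensions simultaneously (not just pairwise with the initial $P_4$), and that head vertices are completely adjacent to the \emph{entire} body $K$, not merely to $\{v_2,v_3\}$. Each of these requires looking at $5$-subsets that mix two new vertices with three of $v_1,\ldots,v_4$, and the bookkeeping is longer than your sketch suggests. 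One small slip: in your mutual-exclusivity argument, when $H$ has exactly two components there is no ``third component''; the path in $\overline{H}$ goes through any vertex of the \emph{other} component, which is enough.
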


The next observation about spiders will be important in \Cref{sec:2obsmin}. It
follows from the fact that a graph is $(0,\infty)$-polar (i.e. a cluster) if and
only if it is a $P_3$-free graph and, complementarily, that a graph is $(\infty,
0)$-polar if and only if it is a $\overline{P_3}$-free graph.

\begin{remark}
\label{rem:spiderSplit}
    Let $G$ be a spider. If $G$ is a headless spider or the head of $G$ induces
    a split graph, then $G$ is a split graph that has both, $P_3$ and its
    complement, as proper induced subgraphs. Hence, $G$ is not a minimal
    $(s,k)$-polar obstruction for any election of $s$ and $k$.
\end{remark}


\subsection{\texorpdfstring{$P_4$}{P4}-extendible graphs}

Given a graph $G$ and a vertex subset $W$, we denote by $S(W)$ the set of
vertices $x \in V_G - W$ such that $x$ belongs to a $P_4$ sharing vertices with
$W$. If a vertex subset $W$ inducing $P_4$ is such that $S(W)$ has at most one
vertex, we say that $W \cup S(W)$ is an \textit{extension set}. In
\cite{jamisonDAM34}, \textit{$P_4$-extendible graphs} were introduced by Jamison
and Olariu as the graphs $G$ such that, for every set $W$ inducing a $P_4$, $W
\cup S(W)$ is an extension set.

An extension set $D$ is \textit{separable} if no vertex of $D$ is both an
endpoint of some $P_4$ and a midpoint of some $P_4$ in $G[D]$. Notice that any
extension set must induce one of the eight graphs depicted in
\Cref{fig:extSets}; we call these graphs \textit{extension graphs}. In addition,
separable extension sets must induce one of $P_4, P, F$ or their complements;
these graphs are called \textit{separable extension graphs}.

For a separable extension graph $X$ with midpoints set $K$ and endpoints set
$S$, a graph $H$ is said to be an \textit{$X$-spider} if $H$ is an induced
supergraph of $X$ such that $R := V_H \setminus V_X$ is completely adjacent to
$K$ but completely nonadjacent to $S$. If $H$ is an $X$-spider, we say that $(S,
K, R)$ is an \textit{$X$-spider partition} of $H$, and we refer to $S, K$ and
$R$ as the \textit{legs set}, the \textit{body}, and the \textit{head} of $H$,
respectively.   From now on, every time we use the term $X$-spider, we are
assuming that $X$ is a separable extension graph.

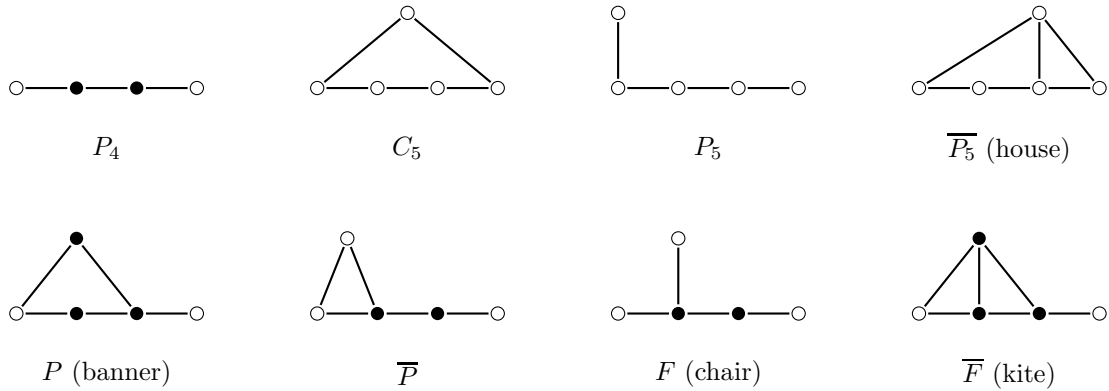
\begin{figure}[ht!]
\centering
\begin{tikzpicture}

\begin{scope}[xscale=0.8] 
\foreach \i in {0,3}
	\node [vertex] (\i) at (\i,0)[]{};
\foreach \i in {1,2}
	\node [blackV] (\i) at (\i,0)[]{};

\foreach \i in {0,1,2}
	\draw let \n1={int(\i+1)} in [edge]
		(\i) to node [above] {} (\n1);
\node [rectangle] (n) at (1.5,-0.8){$P_4$};
\end{scope}

\begin{scope}[xshift=4cm, xscale=0.8] 
\foreach \i in {0,1,2,3}
	\node [vertex] (\i) at (\i,0)[]{};
\node [vertex] (4) at (1.5,1)[]{};

\foreach \i in {0,1,2}
	\draw let \n1={int(\i+1)} in [edge]
		(\i) to node [above] {} (\n1);

\foreach \i/\j in {4/0,4/3}
	\draw [edge] (\i) to node [above] {} (\j);
\node [rectangle] (n) at (1.5,-0.8){$C_5$};
\end{scope}

\begin{scope}[xshift=8cm, xscale=0.8] 
\foreach \i in {0,1,2,3}
	\node [vertex] (\i) at (\i,0)[]{};
\node [vertex] (4) at (0,1)[]{};

\foreach \i in {0,1,2}
	\draw let \n1={int(\i+1)} in [edge]
		(\i) to node [above] {} (\n1);

\foreach \i/\j in {4/0}
	\draw [edge] (\i) to node [above] {} (\j);
\node [rectangle] (n) at (1.5,-0.8){$P_5$};
\end{scope}

\begin{scope}[xshift=12cm, xscale=0.8] 
\foreach \i in {0,1,2,3}
	\node [vertex] (\i) at (\i,0)[]{};
\node [vertex] (4) at (2,1)[]{};

\foreach \i in {0,1,2}
	\draw let \n1={int(\i+1)} in [edge]
		(\i) to node [above] {} (\n1);

\foreach \i/\j in {4/0,4/2,4/3}
	\draw [edge] (\i) to node [above] {} (\j);
\node [rectangle] (n) at (1.5,-0.8){$\overline{P_5}$ (house)};
\end{scope}

\begin{scope}[yshift=-3cm, xscale=0.8] 
\foreach \i in {0,3}
	\node [vertex] (\i) at (\i,0)[]{};
\foreach \i in {1,2}
	\node [blackV] (\i) at (\i,0)[]{};
\node [blackV] (4) at (1,1)[]{};

\foreach \i in {0,1,2}
	\draw let \n1={int(\i+1)} in [edge]
		(\i) to node [above] {} (\n1);

\foreach \i/\j in {4/0,4/2}
	\draw [edge] (\i) to node [above] {} (\j);
\node [rectangle] (n) at (1.5,-0.8){$P$ (banner)};
\end{scope}

\begin{scope}[yshift=-3cm, xshift=4cm, xscale=0.8] 
\foreach \i in {0,3}
	\node [vertex] (\i) at (\i,0)[]{};
\foreach \i in {1,2}
	\node [blackV] (\i) at (\i,0)[]{};
\node [vertex] (4) at (0.5,1)[]{};

\foreach \i in {0,1,2}
	\draw let \n1={int(\i+1)} in [edge]
		(\i) to node [above] {} (\n1);

\foreach \i/\j in {4/0,4/1}
	\draw [edge] (\i) to node [above] {} (\j);

\node [rectangle] (n) at (1.5,-0.8){$\overline{P}$};
\end{scope}

\begin{scope}[yshift=-3cm, xshift=8cm, xscale=0.8] 
\foreach \i in {0,3}
	\node [vertex] (\i) at (\i,0)[]{};
\foreach \i in {1,2}
	\node [blackV] (\i) at (\i,0)[]{};
\node [vertex] (4) at (1,1)[]{};

\foreach \i in {0,1,2}
	\draw let \n1={int(\i+1)} in [edge]
		(\i) to node [above] {} (\n1);

\foreach \i/\j in {4/1}
	\draw [edge] (\i) to node [above] {} (\j);
\node [rectangle] (n) at (1.5,-0.8){$F$ (chair)};
\end{scope}

\begin{scope}[yshift=-3cm, xshift=12cm, xscale=0.8] 
\foreach \i in {0,3}
	\node [vertex] (\i) at (\i,0)[]{};
\foreach \i in {1,2}
	\node [blackV] (\i) at (\i,0)[]{};
\node [blackV] (4) at (1,1)[]{};

\foreach \i in {0,1,2}
	\draw let \n1={int(\i+1)} in [edge]
		(\i) to node [above] {} (\n1);

\foreach \i/\j in {4/0,4/1,4/2}
	\draw [edge] (\i) to node [above] {} (\j);
\node [rectangle] (n) at (1.5,-0.8){$\overline{F}$ (kite)};
\end{scope}

\end{tikzpicture}

\caption{The eight extension graphs. Black vertices are the midpoints of
  separable extension graphs.}
\label{fig:extSets}
\end{figure}

Jamison and Olariu also gave in \cite{jamisonDAM34} the following connectedness
characterization for the class of $P_4$-extendible graphs.

\begin{theorem}\cite{jamisonDAM34}
\label{thm:connChar}
    A graph $G$ is a $P_4$-extendible graph if and only if, for every nontrivial
    induced subgraph $H$ of $G$, precisely one of the following conditions holds
    \begin{enumerate}
        \item $H$ is disconnected.

        \item $\overline{H}$ is disconnected.

        \item $H$ is an extension graph.

        \item There is a unique separable extension graph $X$ such that $H$ is
        an $X$-spider with nonempty head.
    \end{enumerate}
\end{theorem}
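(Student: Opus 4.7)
The plan is to prove both directions, using the hereditary nature of $P_4$-extendibility (if $H \le G$, then $S_H(W) \subseteq S_G(W)$ for every induced $P_4$ $W$ in $H$, so $|S_H(W)| \le 1$ is inherited from $G$) and the symmetry between $G$ and $\overline{G}$ under the operation $W \mapsto S(W)$.

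For the sufficiency direction, assume every nontrivial induced subgraph of $G$ satisfies exactly one of the four conditions, and verify $P_4$-extendibility by showing $|S_G(W)| \le 1$ for an arbitrary induced $P_4$ $W$ in $G$. I would proceed by induction on $|V_G|$, applying the hypothesis to $G$ itself: when $G$ (respectively $\overline{G}$) is disconnected, both $W$ and every vertex of $S(W)$ live in a single component of $G$ (respectively of $\overline{G}$), and the inductive hypothesis closes the case; when $G$ is an extension graph, the bound follows by direct inspection of the eight graphs in \Cref{fig:extSets}, since $|V_G| \le 5$ already forces $|V_G \setminus W| \le 1$; and when $G$ is an $X$-spider with head $R$, a preliminary lemma (arguing that no induced $P_4$ can mix vertices of $V_X$ and $R$, due to the rigid complete/complete-non adjacencies between $R$ and the midpoints/endpoints of $X$) reduces the problem to $G[V_X]$ (an extension graph, hence $P_4$-extendible by direct check) and $G[R]$ (a strictly smaller induced subgraph, $P_4$-extendible by induction).

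For the necessity direction, assume $G$ is $P_4$-extendible, nontrivial, and that neither $G$ nor $\overline{G}$ is disconnected. By \Cref{theo:cogChar}, $G$ is not a cograph, so it contains an induced $P_4$ $W$; by $P_4$-extendibility, $D := W \cup S(W)$ has at most five vertices and $G[D]$ is one of the eight extension graphs. If $D = V_G$, condition~3 holds. Otherwise, for every $v \in V_G \setminus D$ I would enumerate the $2^{|D|}$ possible adjacency patterns of $v$ with $D$ and show that the only admissible one is ``completely adjacent to the midpoints and completely nonadjacent to the endpoints of $G[D]$''; every other pattern either produces an induced $P_4$ containing $v$ and some $w_i \in W$ (contradicting $v \notin S(W)$) or produces a second $P_4$-extender somewhere else in $G$ (violating the uniqueness clause of $P_4$-extendibility). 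The same analysis rules out the three non-separable extension graphs $C_5$, $P_5$ and $\overline{P_5}$ as candidates for $G[D]$ whenever $V_G \setminus D \neq \emptyset$, so $G[D]$ is forced to be a separable extension graph $X$ and $G$ is an $X$-spider with nonempty head $R := V_G \setminus D$; uniqueness of $X$ and mutual exclusivity of conditions~1--4 then follow immediately, since $X = G - R$ is determined by $R$ and conditions~3 and~4 already require both $G$ and $\overline{G}$ to be connected.

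The main obstacle is this case analysis in the necessity direction: for each of the six separable extension graphs on four or five vertices one must enumerate all adjacency patterns of an external vertex and track, for each one, either the specific new $P_4$ through $W$ that it creates or the specific second extender that it generates. The work is mechanical but requires careful bookkeeping, especially for the patterns that do not directly produce a $P_4$ with $W$ (such as ``completely adjacent'' or ``completely nonadjacent'' to $D$) and must be excluded indirectly via the uniqueness clause applied to a different $P_4$ inside $D$.
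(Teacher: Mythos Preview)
The paper does not prove this theorem; it is quoted verbatim from Jamison and Olariu \cite{jamisonDAM34} as a known structural characterization, and no proof or proof sketch appears anywhere in the text. There is therefore no ``paper's own proof'' against which to compare your attempt.

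That said, your outline has a concrete gap in the necessity direction. Your plan is to fix one induced $P_4$ $W$, set $D = W \cup S(W)$, and then classify each $v \in V_G \setminus D$ by its adjacency pattern to $D$, using only that $v \notin S(W)$ and that $G$ is $P_4$-extendible. You correctly flag the patterns ``completely adjacent to $D$'' and ``completely nonadjacent to $D$'' as the hard ones, and you propose to exclude them ``via the uniqueness clause applied to a different $P_4$ inside $D$''. But when $G[D] \cong P_4$ (that is, when $S(W) = \varnothing$), there is \emph{no} other $P_4$ inside $D$, and within $D \cup \{v\}$ alone these two patterns violate nothing: both $K_1 \oplus P_4$ and $K_1 + P_4$ are $P_4$-extendible graphs in which the apex vertex lies in no $P_4$ at all. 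Ruling out these patterns therefore cannot be done one vertex at a time against $D$; it genuinely requires the global hypotheses that $G$ and $\overline G$ are connected, and the argument must bring in at least one further vertex $u \notin D$ (e.g.\ a neighbour of $v$ on a shortest $v$--$D$ path in $G$ or in $\overline G$) before a forbidden $P_4$ can be exhibited. Your plan as written does not account for this, and the phrase ``a different $P_4$ inside $D$'' is precisely where the argument breaks. (Minor point: there are five separable extension graphs --- $P_4, P, \overline P, F, \overline F$ --- not six.)
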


Notice that any extension graph, but $P_4$, is a $P_4$-extendible graph which is
not a $P_4$-sparse graph. In addition, any headless spider of order at least six
is a $P_4$-sparse graph which is not a $P_4$-extendible graph. Thus,
$P_4$-sparse and $P_4$-extendible graphs are two cograph superclasses which are
incomparable to each other.


\section{Minimal 2-polar obstructions}
\label{sec:2obsmin}

Throughout this section we give complete lists of minimal $P_4$-sparse and
minimal $P_4$-extendible $2$-polar obstructions, obtaining in this way
characterizations for the $P_4$-sparse and $P_4$-extendible graphs which admit a
$2$-polar partition. These characterizations generalize analogous results given
for cographs by Hell, Hern\'andez-Cruz and Linhares-Sales in \cite{hellDAM261}.
In fact, we base our characterizations in the following propositions, most of
them taken from the mentioned paper.

We start with two lemmas which provide some useful general structural properties
about minimal $k$-polar obstructions.

\begin{lemma}\cite{hellDAM261}
    \label{lem:hellDAM261lem1}
    Let $H$ be a minimal $k$-polar obstruction. The following statements are
    true
    \begin{enumerate}
        \item $H$ has at most $k+2$ components.

        \item $H$ has at least one nontrivial component.

        \item $H$ has at most $k+1$ trivial components.

        \item If $H$ has at least one trivial component, $H$ has at most one
        noncomplete component.

        \item If $H \not \cong (k+1)K_{k+1}$, every complete component of $H$ is
            isomorphic to $K_1$ or $K_2$.
    \end{enumerate}
\end{lemma}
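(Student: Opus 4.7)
The plan is to use minimality of $H$ throughout: for every vertex $v$, the graph $H - v$ admits a $k$-polar partition $(A, B)$, and we exploit two structural constraints. Since $G[B]$ is a disjoint union of at most $k$ cliques, $B$ intersects at most $k$ components of $H - v$, and every component of $H - v$ contained entirely in $B$ is itself a clique. Since $G[A]$ is complete $k$-partite, it is either connected (and then lies inside a single component of $H - v$) or an independent set; in the latter case every component of $H - v$ contained in $A$ is trivial, and moreover if $A$ contains an isolated vertex $u$ of $H - v$ then $A$ cannot have a second nonempty part (else $u$ would be forced to be adjacent to its vertices), so $A$ is a single independent part.

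Item 2 is immediate: if every component of $H$ were trivial, then $H \cong nK_1$ would itself be complete $1$-partite and hence $k$-polar. For item 3, suppose $H$ has $t \ge k + 2$ trivial components and remove one of them, $v$; in the partition $(A, B)$ of $H - v$, either some isolated vertex of $H - v$ lies in $A$, in which case the structural observation forces $A$ to be a single independent part and $v$ can be appended to $A$ to produce a $k$-polar partition of $H$; or all $t - 1 \ge k + 1$ isolated vertices of $H - v$ lie in $B$, each contributing its own $K_1$-clique, so $G[B]$ has strictly more than $k$ components. Both subcases contradict minimality. Item 4 is handled similarly: if $H$ has an isolated vertex $t$ and two noncomplete components $C_1, C_2$, then in the partition of $H - t$ neither $C_i$ can lie fully in $B$ (they are connected and not cliques), so both intersect $A$; this forces $A$ to meet two different components of $H$, hence $G[A]$ is disconnected, therefore an independent set, after which $t$ is appended to $A$ to extend the partition.

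For item 1, assume $m \ge k + 3$ and split on whether $H$ has a trivial component. If it does, pick such a $v$ and note that $H - v$ has $m - 1 \ge k + 2$ components; in the partition $(A, B)$, connectedness of $G[A]$ confines at least $m - 2 \ge k + 1$ components into $B$ (too many cliques), whereas independence of $G[A]$ allows $v$ to be appended to $A$. If $H$ has no trivial component, pick any $v$: then $H - v$ has at least $m \ge k + 3$ components, so at least two lie entirely in $A$, which forces $G[A]$ to be an independent set and every component of $H - v$ in $A$ to be trivial; hence the nontrivial components $C_2, \dots, C_m$ of $H$ cannot lie entirely in $A$ and must each intersect $B$, contributing $m - 1 \ge k + 2$ distinct cliques to $G[B]$, a contradiction.

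For item 5, one assumes a complete component $K_a$ with $a \ge 3$ together with $H \not\cong (k+1)K_{k+1}$, removes $v \in K_a$, and tracks the location of the resulting clique $K_{a-1}$ in the partition of $H - v$; a careful pigeonhole on the parts of $A$ and the cliques of $B$ lets one re-insert $v$ into the partition unless the component count and sizes of $H$ match the extremal graph exactly. The main obstacle, as I see it, is item 5, because unlike the earlier parts it is not a one-step extension argument: one must show that failure to re-insert $v$ can only happen for the specific graph $(k+1)K_{k+1}$, which requires delicate interaction between the clique structure of $G[B]$ and the multipartite structure of $G[A]$ when several complete components of size at least three are simultaneously present.
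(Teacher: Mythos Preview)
The paper does not prove this lemma; it is quoted from \cite{hellDAM261} without argument, so there is no in-paper proof to compare against. That said, your handling of items 1--4 is correct. The dichotomy you exploit on the multipartite side---a complete multipartite graph is either connected (hence confined to a single component) or a single independent part---together with the pigeonhole on the at most $k$ cliques of $B$, cleanly yields each of these items. One small remark on item 1, case (b): your claim that $H-v$ has at least $m$ components uses that every component of $H$ is nontrivial, and your implicit bound ``at most $k$ components of $H-v$ meet $B$'' is justified because distinct components meeting $B$ must occupy distinct cliques of $G[B]$.

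The genuine gap is item 5. You only sketch it and explicitly flag it as the obstacle, so the proposal as written is incomplete. Your opening move is right: delete $v$ from a $K_a$-component $C$ with $a\ge 3$ and examine where $C-v$ sits in a $k$-polar partition $(A,B)$ of $H-v$. What is missing is the actual case analysis. If $C-v$ meets $B$, one can always append $v$ to the clique $(C-v)\cap B$ of $G[B]$ (since $v$ is adjacent to all of $C-v$ and to nothing outside $C$), so the only obstructive case is $C-v\subseteq A$. There one shows $A=C-v$ exactly (any vertex of $A\setminus C$ would be forced, by multipartiteness, to be adjacent to some vertex of $C-v$, impossible as $C$ is a component), whence $a-1\le k$; failure to append $v$ to $A$ then forces $a=k+1$, and failure to append $v$ as a new singleton clique in $B$ forces $G[B]$ to already have $k$ cliques, so $H$ has exactly $k+1$ complete components with $C\cong K_{k+1}$. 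The remaining step---showing every other component is also $K_{k+1}$---is the ``delicate interaction'' you allude to but do not carry out; it is obtained by deleting a vertex from each of the other components in turn and using that the existing $K_{k+1}$ can now serve as $A$. Until that analysis is written down, item 5 remains unproved.
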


\begin{lemma}\cite{hellDAM261}
\label{lem:hellDAM261seven}
    Let $H$ be a minimal $2$-polar obstruction.
    \begin{enumerate}
        \item $H$ has at least seven vertices.
        
        \item If $H$ has seven vertices and three connected components, then at
            least one of them is an isolated vertex.
    \end{enumerate}
\end{lemma}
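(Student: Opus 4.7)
The plan is to prove both items by contradiction: in each case I would assume the stated configuration for $H$, combine the constraints of Lemma~\ref{lem:hellDAM261lem1} to narrow the possible component structure to a short list of isomorphism types, and then exhibit an explicit $(2,2)$-polar partition for each type, contradicting minimality.

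I would dispose of item 2 first, since the argument is cleaner. Suppose $H$ has seven vertices and three components, none of them isolated. Since $|V_H|=7\neq 9$, $H\not\cong 3K_3$, and by Lemma~\ref{lem:hellDAM261lem1}(5) every complete component of $H$ is $K_1$ or $K_2$; the absence of isolated vertices leaves only $K_2$. Partitioning $7$ into three parts each of size at least $2$ forces the component sizes to be $(2,2,3)$, so $H \cong 2K_2 + C$ with $C$ a connected graph on three vertices. As $C \cong K_3$ would be a complete component of order $3$ and hence contradict (5), we must have $C \cong P_3$, i.e., $H \cong 2K_2 + P_3$. But taking $A=V(P_3)$ and $B=V(2K_2)$ yields a $(2,2)$-polar partition: $G[A]=P_3=K_{1,2}$ is complete bipartite and $G[B]=2K_2$ is a $2$-cluster. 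This contradicts $H$ being a $2$-polar obstruction.

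For item 1 the same template applies but more cases arise. Assume $|V_H| \le 6$. Again $H \not\cong 3K_3$, so complete components are $K_1$ or $K_2$; and items (1)--(4) of Lemma~\ref{lem:hellDAM261lem1} restrict $H$ to between one and four components, at most three trivial, at least one nontrivial, and at most one noncomplete component whenever a trivial one exists. The disconnected candidates then reduce to a short list---for instance $K_2+3K_1$, $2K_2+2K_1$, $P_3+3K_1$, $K_1+2K_2$, $K_1+K_2+P_3$, $3K_2$, and $2K_1+C$ for a handful of small connected noncomplete $C$ of order at most four---and each admits a $(2,2)$-polar partition obtained by placing an independent subset (always complete $1$-partite, and hence complete $2$-partite) in $A$ and leaving at most two cliques in $B$. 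For connected $H$ I would exploit that $(2,2)$-polarity is self-complementary: swapping the parts of an $(s,k)$-polar partition of $G$ yields a $(k,s)$-polar partition of $\overline{G}$, so $\overline{H}$ is also a minimal $2$-polar obstruction; whenever $\overline{H}$ turns out to be disconnected, the previous case analysis applied to $\overline{H}$ finishes the argument.

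The main obstacle is the remaining sub-case in which both $H$ and $\overline{H}$ are connected, because Lemma~\ref{lem:hellDAM261lem1} provides no useful decomposition of such ``prime'' graphs. There are only finitely many connected graphs on at most six vertices whose complement is also connected, and I would inspect each one to locate, for example, a pair of vertices whose deletion leaves at most two clique components, producing the desired partition. This finite verification is by far the most laborious step, but I do not anticipate any conceptual difficulty beyond the bookkeeping.
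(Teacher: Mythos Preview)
The paper does not prove this lemma; it is quoted verbatim from \cite{hellDAM261} with no accompanying argument, so there is no in-paper proof to compare your proposal against.

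On its own merits: your treatment of item~2 is complete and correct. The combination of Lemma~\ref{lem:hellDAM261lem1}(5) with the forced partition $7=2+2+3$ pins $H$ down to $2K_2+P_3$, and the explicit $(2,2)$-polar partition $(V(P_3),V(2K_2))$ finishes it cleanly.

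For item~1 your strategy is sound---reduce via Lemma~\ref{lem:hellDAM261lem1} and the self-complementarity of $(2,2)$-polarity, then exhaust the residual finite list---but what you have written is a plan, not a proof. The disconnected candidates you name are explicitly ``for instance'' rather than exhaustive (you omit, e.g., $K_1+C$ for connected $C$ of order up to five, $K_2+C$ for $C$ of order up to four, and $2P_3$), and the doubly-connected case on five and six vertices still comprises a couple of dozen isomorphism types you have not actually checked. There is no conceptual obstacle here, and the underlying claim (equivalently: every graph on at most six vertices is $(2,2)$-polar) is true, but as written item~1 remains a sketch awaiting the bookkeeping you yourself flag as the laborious step.
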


Next, we give a slight correction to Lemma 2 in \cite{hellDAM261}, which
characterize the minimal $k$-polar obstructions with the maximum possible number
of components; it is worth noticing that it does not affect the main results in
such paper.

\begin{lemma}
\label{lem:charExtrKobsmin}
    Let $k$ be an integer, $k\ge 2$, and let $G$ be graph. Then, $G$ is a
    minimal $k$-polar obstruction with exactly $k+2$ connected components if and
    only if $G\cong \ell K_1 + (k-\ell+1)K_2 + G'$, where $\ell$ is an integer
    in the set $\{1,\dots,k+1\}$ and $G'$ is a connected complete $k$-partite
    graph which is a minimal $(1,\ell-1)$-polar obstruction and such that, if
    $\ell \le k$, $G'$ is a $(1,\ell)$-polar graph.
\end{lemma}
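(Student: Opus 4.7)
The plan is to prove both directions by structural analysis guided by Lemma~\ref{lem:hellDAM261lem1}, with the forward direction pinning down the shape of $G$ and the backward direction exhibiting explicit partitions.

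For the forward direction, let $G$ be a minimal $k$-polar obstruction with exactly $k+2$ components. Since $(k+1)K_{k+1}$ has only $k+1$ components, $G \not\cong (k+1)K_{k+1}$, so by item~5 of Lemma~\ref{lem:hellDAM261lem1} every complete component of $G$ is $K_1$ or $K_2$. Let $\ell$ denote the number of $K_1$ components; item~3 gives $\ell \le k+1$. The first substantive step is to show $\ell \ge 1$, arguing by contradiction: if $\ell = 0$, pick any $K_2$ component $\{u,v\}$ and consider the $k$-polar partition $(A,B)$ of $G-v$ guaranteed by minimality. In $G-v$ the vertex $u$ is isolated, so if $u \in B$ the clique of $u$ in $G[B]$ is $\{u\}$ and returning $v$ to $B$ grows this clique to $\{u,v\}$, yielding a $k$-polar partition of $G$; the subcase $u \in A$ is handled by noting that any other vertex of $A$ would form $\overline{P_3}$ with $\{u,v\}$, forcing $A=\{u\}$ so that $(\{u,v\},B)$ partitions $G$. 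Either subcase contradicts that $G$ is an obstruction. With $\ell \ge 1$ in hand, item~4 bounds the noncomplete components by one, and a component count forces $G \cong \ell K_1 + (k-\ell+1)K_2 + G'$, where $G'$ is either the unique noncomplete component or, if every component of $G$ is complete, an extra $K_2$ (forcing $\ell=1$, since $K_2$ is a minimal $(1,\ell-1)$-polar obstruction only when $\ell=1$).

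The remaining forward tasks pin down the properties of $G'$ by partition arguments: if $G'$ contained an induced $\overline{P_3}$ or $K_{k+1}$ one could build a $k$-polar partition of $G$ directly from that substructure, so $G'$ is complete $k$-partite; if $G'$ were $(1,\ell-1)$-polar via $(A',B')$, then taking $A = A' \cup \{\text{one endpoint of each }K_2\}$ (still independent) and $B = B' \cup \{\text{the other endpoint of each }K_2\}$, placing the $\ell$ trivial vertices into $A$, leaves at most $k$ cliques in $B$ and contradicts that $G$ is an obstruction; minimality of $G'$ as a $(1,\ell-1)$-polar obstruction is obtained by restricting a $k$-polar partition of $G-u$ (for $u \in V(G')$) to $V(G')\setminus\{u\}$ and bookkeeping the cluster slots already consumed by the $K_1$'s and $K_2$'s; and if $\ell \le k$, the $(1,\ell)$-polarity of $G'$ follows from an analogous restriction of a $k$-polar partition of $G-v$ for $v$ a trivial vertex.

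For the backward direction, I would show $G$ is not $k$-polar by an exhaustive case analysis on $(A,B)$: either $A$ is independent—and then the $\ell$ trivial vertices, the $(k-\ell+1)$ copies of $K_2$, and $V(G')\cap B$ contribute more than $k$ cliques to $B$ unless $V(G')\cap B$ has at most $\ell-1$ cliques with $V(G')\cap A$ independent, which would give a forbidden $(1,\ell-1)$-polar partition of $G'$—or $A$ contains an edge and thus lies in a single component of $G$, immediately forcing at least $k+1$ cliques into $B$. For every $w \in V(G)$ I would produce an explicit $k$-polar partition of $G-w$: (i) if $w$ is a trivial vertex, set $A = V(G')$ (complete $k$-partite by hypothesis) and $B$ collects the remaining $k$ cliques from the $K_1$'s and $K_2$'s; (ii) if $w \in V(K_2)$ and $\ell \le k$, combine a $(1,\ell)$-polar partition of $G'$ with a split of each intact $K_2$; (iii) if $w \in V(G')$, use that $G'-w$ is $(1,\ell-1)$-polar and place the $\ell$ trivial vertices and one endpoint of each $K_2$ into $A$, sending the rest into $B$. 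The main obstacle I anticipate is the $\ell \ge 1$ argument: lifting a $k$-polar partition of $G-v$ back to $G$ demands careful handling of where $u$ sits and how many cluster slots remain available in $B$, and it is precisely this point where the ``slight correction'' stated in the lemma enters the picture.
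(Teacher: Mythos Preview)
Your overall plan matches the paper's, but the argument for $\ell \ge 1$ has a genuine gap. You write ``if $\ell = 0$, pick any $K_2$ component $\{u,v\}$'', yet nothing in Lemma~\ref{lem:hellDAM261lem1} guarantees a $K_2$ component when $\ell = 0$: item~4 bounds the noncomplete components only \emph{given} a trivial one, so a priori all $k+2$ components could be noncomplete. Even granting a $K_2$ component, your $u \in A$ subcase does not close. The $\overline{P_3}$ observation only shows that $G[A]$ is an independent set (an isolated vertex in a $\overline{P_3}$-free graph forces the whole graph to be edgeless), not that $A = \{u\}$; and if it happened that $A = \{u\}$, then $B$ would contain all $k+1$ remaining nontrivial components and hence have at least $k+1$ cliques, so $(\{u,v\},B)$ is \emph{not} a $k$-polar partition of $G$. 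The paper sidesteps both issues with a single stroke: if $\ell = 0$, every one of the $k+2$ components is nontrivial, so $G$ properly contains $K_1 + (k+1)K_2$ (take one vertex from one component and one edge from each of the others); since $K_1 + (k+1)K_2$ is itself a minimal $k$-polar obstruction, this contradicts the minimality of $G$.

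Two smaller points. Your sketch for the complete $k$-partiteness of $G'$ (``build a $k$-polar partition of $G$ directly from an induced $\overline{P_3}$ or $K_{k+1}$'') is not an argument as stated, and I do not see a direct construction; the paper instead deletes an isolated vertex $v$, observes that no $k$-polar partition of $G-v$ can be $(1,k)$-polar (else $G$ would be), and since $G-v$ still has $k+1$ components concludes $A = V(G')$. For the $(1,\ell)$-polarity of $G'$ when $\ell \le k$, you should delete a vertex from a $K_2$ component rather than a trivial vertex: removing a $K_2$-vertex leaves $k+2$ components, forcing any $k$-polar partition of the result to be $(1,k)$-polar, from which the restriction to $G'$ is $(1,\ell)$-polar; removing a trivial vertex leaves only $k+1$ components and reproduces the complete $k$-partiteness conclusion, not $(1,\ell)$-polarity.
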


\begin{proof}
	Suppose $G\cong \ell K_1 + (k-\ell+1)K_2 + G'$, where $\ell$ is an integer
	in the set $\{1,\dots,k+1\}$ and $G'$ is a connected complete $k$-partite
	graph which is a minimal $(1,\ell-1)$-polar obstruction such that, if $\ell
	\le k$, it is a $(1,\ell)$-polar graph. If $G$ is a $(1,k)$-polar graph,
	then $G'$ is a $(1, \ell-1)$-polar graph, but it is not. Thus, since $G$ is
	not $(1,k)$-polar, if its admits a $k$-polar partition $(A,B)$, the subgraph
	$G[A]$ is a connected graph and hence it is completely contained in some
	component of $G$.   But then, $G$ would have at most $k+1$ connected
	components, which is not the case. Hence, $G$ is not a $k$-polar graph.

	Let $v$ be an isolated vertex of $G$. Then $G-v$ is the disjoint union of a
	$k$-cluster with $G'$, and since $G'$ is a complete $k$-partite graph, then
	$G-v$ is a $k$-polar graph. Now, since $G'$ is a minimal $(1,\ell-1)$-polar
	obstruction, for any vertex $w$ of $G'$, $G'-w$ can be partitioned into an
	stable set and an $(\ell-1)$-cluster, so $G-w$ is a $(1,k)$-polar graph, and
	then a $k$-polar graph. Finally, if at least one component of $G$ is a copy
	of $K_2$, then $\ell \le k$ and we have that $G'$ is a $(1,\ell)$-polar
	graph. Thus, for any vertex $u$ in a $K_2$-component of $G$, $G-u$ is a
	$(1,k)$-polar graph. Hence, $G$ is a minimal $k$-polar obstruction which
	evidently has exactly $k+2$ connected components.

	For the converse implication, assume that $G$ is a minimal $k$-polar
	obstruction with precisely $k+2$ components and $\ell$ isolated vertices. If
	$\ell =0$ then $G$ properly contains $K_1 + (k+1)K_2$ as an induced
	subgraph, but that is impossible because both, $G$ and $K_1 + (k+1)K_2$ are
	minimal $k$-polar obstructions. Then, $G$ has at least one isolated vertex,
	and evidently $G$ is not an empty graph, so $\ell \le k+1$. We know by
	\Cref{lem:hellDAM261lem1} that $G$ has at most one noncomplete connected
	component and that any complete component of $G$ has at most two vertices,
	so $G \cong \ell K_1 + (k-\ell+1)K_2 + G'$ where $\ell\in\{1,\dots, k+1\}$
	and $G'$ is a connected graph.

	Notice that $G'$ is not a $(1,\ell-1)$-polar graph, otherwise $G$ would be a
	$(1,k)$-polar graph, and hence a $k$-polar graph. Let $u$ be a vertex of
	$G'$. By the minimality of $G$, we have that $G-u$ is a $k$-polar graph.
	Moreover, since $G-u$ has at least $k+2$ connected components, any $k$-polar
	partition of $G-u$ is necessarily a $(1,k)$-polar partition, which implies
	that $G'-u$ is a $(1,\ell-1)$-polar graph. Then $G'$ is a minimal
	$(1,\ell-1)$-polar obstruction. Now, let $v$ be an isolated vertex of $G$.
	By the minimality of $G$, $G-u$ has a $k$-polar partition $(A,B)$, but it
	cannot be a $(1,k)$-polar partition or $G$ would be a $(1,k)$-polar graph.
	Thus, either $G' \cong K_2$ and $\ell=1$, or $A=V(G')$ and hence $G'$ is a
	complete $k$-partite graph. Finally, if $l\le k$, $G$ has at least one
	$K_2$-component. Let $w$ be a vertex in one of such components. Then $G-w$
	is a $k$-polar graph with $k+2$ connected components, which implies that in
	fact $G-w$ is a $(1,k)$-polar graph, and hence $G'$ is a $(1,\ell)$-polar
	graph.
\end{proof}

A \textit{partial complement} of a graph $H$ is either the usual complement of
$H$, or a graph $\overline{H_1}+\overline{H_2}$, where $H_1$ and $H_2$ are
subgraphs of $H$ obtained by splitting the components of $H$ into two parts,
$H_1$ and $H_2$. The next result shows how the partial complement operation
preserves $2$-polarity, which will be useful for giving compact lists of minimal
$2$-polar obstructions on $P_4$-sparse and $P_4$-extendible graphs. Remarkably,
this lemma was originally proven for the special class of cographs, but the same
proof works for any hereditary class of graphs closed under complement and
disjoint union operations, particularly, it works for the classes of
$P_4$-sparse and $P_4$-extendible graphs.

\begin{lemma}\cite{hellDAM261}
\label{lem:hellDAM261partialComp}
    Let $\mathcal{G}$ be a hereditary class of graphs closed under complement
    and disjoint union operations, and let $G \in \mathcal{G}$ be a $2$-polar
    graph. Then, any partial complement of $G$ is a $2$-polar graph belonging to
    $\mathcal{G}$.
\end{lemma}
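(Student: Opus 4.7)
The plan is to exhibit the swapped bipartition $(B, A)$ as a $2$-polar partition of the partial complement $G' = \overline{H_1} + \overline{H_2}$. That $G' \in \mathcal{G}$ is immediate: $H_1$ and $H_2$ are induced subgraphs of $G$, hence they lie in $\mathcal{G}$ by heredity; their complements are in $\mathcal{G}$ by closure under complement; and their disjoint union is in $\mathcal{G}$ by closure under disjoint union. Hence the real work is to verify the $2$-polarity.

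Set $A_i = A \cap V(H_i)$ and $B_i = B \cap V(H_i)$ for $i \in \{1,2\}$. Because $G'$ has no edges between $V(H_1)$ and $V(H_2)$, the induced subgraphs decompose as
\[
G'[A] = \overline{G[A_1]} + \overline{G[A_2]}, \qquad G'[B] = \overline{G[B_1]} + \overline{G[B_2]},
\]
and the task reduces to showing that $G'[B]$ is complete $2$-partite while $G'[A]$ is a $2$-cluster.

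The key structural observation, and the step that really needs care, concerns $G'[B]$: each clique component of the $2$-cluster $G[B]$, being connected, lies inside a single component of $G$ and hence inside a single $H_i$. Consequently each $G[B_i]$ is a disjoint union of \emph{whole} clique components of $G[B]$, and the combined number of such cliques across $B_1$ and $B_2$ is at most $2$. Taking complements, each $\overline{G[B_i]}$ is either empty, an independent set, or a $K_{p,q}$ with $p,q \geq 1$; the arithmetic constraint rules out precisely the combinations that would introduce an induced $\overline{P_3}$ in the disjoint union, and a short case check shows $\overline{G[B_1]} + \overline{G[B_2]}$ is itself empty, an independent set, or a single $K_{p,q}$—always complete $2$-partite.

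For $G'[A]$ I would use the dichotomy that a complete $2$-partite graph is either connected (hence empty, a single vertex, or a $K_{a,b}$ with both parts nonempty) or an independent set. In the connected case $A$ lies inside one $H_i$, so $G'[A] = \overline{G[A]}$, the complement of a complete $2$-partite graph, which is a $2$-cluster. In the independent-set case each $\overline{G[A_i]}$ is a clique, so $G'[A]$ is a disjoint union of at most two cliques, again a $2$-cluster. Once the structural observation that the cliques of $G[B]$ cannot be split across $H_1$ and $H_2$ is in place, the potentially dangerous disjoint union of two complete multipartite graphs collapses to a single complete multipartite graph, and the rest of the verification is routine.
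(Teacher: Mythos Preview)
Your argument is correct. The paper does not actually give a proof of this lemma: it is quoted from \cite{hellDAM261}, with the remark that the original proof for cographs carries over verbatim to any hereditary class closed under complement and disjoint union. What you have written is precisely that direct argument --- swap the roles of $A$ and $B$ in the partial complement, use that connected pieces (the cliques of $G[B]$, or a connected $G[A]$) cannot straddle $H_1$ and $H_2$, and check the small number of resulting cases. This is the standard proof and matches what the paper is invoking by citation.

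One cosmetic point: you only treat the case $G' = \overline{H_1} + \overline{H_2}$ explicitly, but the ``usual complement'' case is either trivial (take $(B,A)$ directly) or recovered by allowing one of $H_1, H_2$ to be empty, so nothing is missing.
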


Based on the previous propositions, we can easily check that the graphs in
\Crefrange{fig:2obsmin9v}{fig:2obsP4ext4}, as well as their complements, are all
of them minimal $2$-polar obstructions: it is enough to verify that, for each of
the mentioned figures, one of its graphs, let say $F$, is a $2$-polar
obstruction, and that the closure of $\{F\}$ under partial complements is
precisely the set of all graphs in the same figure and their complements. In the
following sections we prove that any minimal $2$-polar obstruction that is a
$P_4$-sparse or a $P_4$-extendible graph is either a graph depicted in Figures
\Crefrange{fig:2obsmin9v}{fig:2obsP4ext4} of the complement of one of them.


\subsection{\texorpdfstring{$P_4$}{P4}-sparse minimal 2-polar obstructions}

Throughout this section we characterize $P_4$-sparse graphs admitting a
$2$-polar partition by means of its family of minimal obstructions. At the end
of the section we conclude that any $P_4$-sparse minimal $2$-polar obstruction
is in fact a cograph, which is interesting since also any known $P_4$-sparse
minimal $(s,k)$-polar obstruction is a cograph. We start by proving that the
complement of any connected $P_4$-sparse minimal $2$-polar obstruction is a
disconnected graph.

\begin{proposition}
\label{pro:p4sp2polSIIsplit}
    If $G$ is a spider, then $G$ is $2$-polar if and only if $G$ is a split
    graph.
\end{proposition}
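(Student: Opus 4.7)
The easy direction is immediate: every split graph is $(1,1)$-polar, hence $(2,2)$-polar, that is, $2$-polar. For the converse, suppose $G$ is a spider with partition $(S, K, R)$ admitting a $2$-polar partition $(A, B)$; write $A = A_1 \cup A_2$ for the (possibly degenerate) bipartition of the complete bipartite graph $G[A]$ and $B = B_1 \cup B_2$ for the at most two clique components of $G[B]$. In view of \Cref{rem:spiderSplit}, it suffices to show that either $R$ is empty or $G[R]$ is a split graph.

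The heart of the plan is the structural claim that there exist $i, j \in \{1, 2\}$ with $R \subseteq A_i \cup B_j$. Granting this, $R \cap A_i$ is independent (as a subset of an independent part of $G[A]$) and $R \cap B_j$ is a clique (as a subset of a clique component of $G[B]$); hence $R$ admits a split partition, and \Cref{rem:spiderSplit} yields that $G$ is split. To establish the claim I would show that $R$ cannot meet both parts $A_1, A_2$, and by an analogous argument cannot meet both cliques $B_1, B_2$. Suppose for contradiction that $r_1 \in R \cap A_1$ and $r_2 \in R \cap A_2$. Since every vertex of $R$ is completely adjacent to $K$ and completely nonadjacent to $S$, and since in $G[A]$ two vertices are adjacent precisely when they lie in different parts, the placement of $r_1$ forces $S \cap A_2 = K \cap A_1 = \emptyset$, and that of $r_2$ forces $S \cap A_1 = K \cap A_2 = \emptyset$; hence $S \cup K \subseteq B$. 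Now the clique $K$ is confined to a single cluster of $B$, while the independent set $S$ can contribute at most one vertex per cluster. If $m := |S| \geq 3$, this is already a cardinality contradiction; if $m = 2$, the unique leg lying in the same cluster as $K$ would be adjacent in $G[B]$ to both vertices of $K$, contradicting whichever of $N(s) = \{f(s)\}$ or $N(s) = K \setminus \{f(s)\}$ defines the spider. The symmetric statement for $B_1, B_2$ follows by the dual argument, or equivalently by applying the first half of the claim to $\overline{G}$ (which is again a spider with the same head $R$) together with its $2$-polar partition $(B, A)$.

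The main obstacle I expect is the small case $m = 2$: for $m \geq 3$ the contradiction is a pure cardinality count against fitting an independent set of size three inside a $2$-cluster, but for $m = 2$ that slack disappears and one must explicitly invoke the matching $f\colon S \to K$ to rule out the remaining configuration. Once the structural claim is in hand, the split decomposition of $R$, and therefore of $G$, is essentially read off from the $2$-polar partition.
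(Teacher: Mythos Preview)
Your proposal is correct and follows essentially the same route as the paper: both arguments reduce to showing that $G[R]$ is split (and then invoke \Cref{rem:spiderSplit}) by establishing that $R$ meets at most one part $A_i$ and at most one clique $B_j$ of the given $2$-polar partition. The only cosmetic differences are that the paper first passes to thin spiders with nonempty head and phrases the contradiction as ``the headless spider $G[S\cup K]$ is not complete multipartite,'' whereas you keep both spider types in play, derive the contradiction from an explicit placement argument for $S$ and $K$, and handle the $m=2$ case by hand.
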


\begin{proof}
	Let $(S, K, R)$ be the spider partition of $G$. We only need to prove that
	any $2$-polar spider is, in fact, a split graph. Since $k$-polar graphs are
	closed under complements, and headless spiders trivially are split graphs,
	we can assume that $G$ is a thin spider with nonempty head. Let $(V_1, V_2,
	V_3, V_4)$ be a $2$-polar partition of $G$, and for any $i\in\{1,2,3,4\}$,
	let $R_i = V_i\cap R$. Notice that, since $K$ is completely adjacent to $R$,
	$R_i = \varnothing$ for some $i \in \{1, \dots, 4\}$.

	First, suppose that $(R_1,R_3,R_4)$ is a $(1,2)$-polar partition of $G[R]$.
	Again, some of $R_1,R_3$ and $R_4$ must be empty because $K$ and $R$ are
	completely adjacent and $K$ has at least two vertices. Thus, either $(R_1,
	R_3)$ is a split partition of $G[R]$, or $(R_3,R_4)$ is a $(0,2)$-polar
	partition of $G[R]$. But the second case is not possible since then, $S\cup
	K\subseteq V_1\cup V_2$, which is impossible since $G[S \cup K]$ is not a
	complete multipartite graph. Hence, $G[R]$ is a split graph and, by
	\Cref{rem:spiderSplit}, also is $G$. The case in which $(R_1,R_2,R_3)$ is a
	$(2,1)$-polar partition of $G[R]$ can be treated in a similar way.
\end{proof}

\begin{corollary}
    If $G$ is a spider, then $G$ is not a minimal $2$-polar obstruction. In
    consequence, for any $P_4$-sparse minimal $2$-polar obstruction $H$, either
    $H$ or its complement is disconnected.
\end{corollary}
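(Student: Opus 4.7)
My plan is to prove the first assertion, that no spider is a minimal $2$-polar obstruction, by contradiction; the second assertion will then be immediate from \Cref{thm:connCharSparse}. So suppose $G$ is a spider and a minimal $2$-polar obstruction. Because $2$-polarity and the property of being a minimal $2$-polar obstruction are both closed under complementation, and the complement of a thin spider is a thick spider, I may assume $G$ is a thin spider with legs $S$, body $K$, head $R$, and bijection $f\colon S\to K$. Since $G$ is not $2$-polar, \Cref{pro:p4sp2polSIIsplit} gives that $G$ is not split, and the contrapositive of \Cref{rem:spiderSplit} (whose converse for spiders is a short direct check) forces $R$ to be nonempty and $G[R]$ to not be split.

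Next I extract the basic constraint on $G[R]$. For each $r\in R$, the graph $G-r$ is a thin spider with head $R-r$; by minimality and \Cref{pro:p4sp2polSIIsplit}, $G-r$ is split, so $G[R-r]$ is split. Hence $G[R]$ is a minimal non-split graph, that is, $G[R]\cong 2K_2$, $C_4$, or $C_5$. I will then split on $|S|$. If $|S|\ge 3$, I remove a leg $s\in S$: in a thin spider, $f(s)$ is adjacent to $K-\{f(s)\}$ and to all of $R$, while non-adjacent to $S-s$, so $G-s$ remains a thin spider with body $K-\{f(s)\}$ and head $R\cup\{f(s)\}$. This new head contains $G[R]$ as an induced subgraph, hence is not split, and \Cref{pro:p4sp2polSIIsplit} yields that $G-s$ is not $2$-polar, contradicting minimality.

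The main obstacle is the case $|S|=2$, where removing a leg destroys the spider structure and \Cref{pro:p4sp2polSIIsplit} no longer applies directly. I plan to handle this by exhibiting, for each of the three possible heads, a specific vertex $v$ with $G-v$ not $2$-polar. When $G[R]\cong C_4$ or $C_5$, I take $v=s_1$: the natural candidate partition $A=\{s_2,f(s_1),f(s_2)\}$, $B=R$ fails because $G[R]$ is not a $2$-cluster, and the remaining candidates can be ruled out by tracking how $s_2$ and the body vertices interact with the non-$2$-cluster structure of $G[R]$. When $G[R]\cong 2K_2$, that partition is in fact valid for $G-s_1$, so I instead take $v=f(s_1)$, which isolates the leg $s_1$; a parallel case analysis on the resulting seven-vertex graph rules out every candidate $2$-polar partition, again contradicting minimality.

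Finally, for the second assertion, let $H$ be a $P_4$-sparse minimal $2$-polar obstruction. By \Cref{thm:connCharSparse}, exactly one of the following holds: $H$ is disconnected, $\overline{H}$ is disconnected, or $H$ is a spider. The first assertion eliminates the spider alternative, so $H$ or $\overline{H}$ must be disconnected.
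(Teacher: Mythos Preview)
Your proposal is correct and follows essentially the same approach as the paper: reduce to $G[R]$ being a minimal split obstruction and then exhibit a leg or body vertex whose deletion still yields a non-$2$-polar graph, contradicting minimality. The paper compresses your $|S|\ge 3$ and $|S|=2$ analyses into the single sentence ``from here it is easy to prove that deleting either one leg or one vertex of the body of $G$ the resulting graph is not a $2$-polar graph,'' and it silently uses the ambient $P_4$-sparse hypothesis to exclude $G[R]\cong C_5$, whereas you handle that case as well; so your write-up is slightly more explicit and marginally more general, but not a different route.
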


\begin{proof}
  Let $(S, K, R)$ be the spider partition of $G$. As in the lemma above, we can
  suppose that $G$ is a thin spider. Assume for a contradiction that $G$ is a
  minimal $2$-polar obstruction so, by the previous lemma and
  \Cref{rem:spiderSplit}, we have that $G[R]$ is not a split graph. Then, for
  any $r\in R$, $G-r$ is a spider which is $2$-polar, so $G[R]-r$ is a split
  graph. Thus $G[R]$ is a $P_4$-sparse minimal split obstruction, that is to
  say, $G[R]$ is isomorphic to either $2K_2$ or $C_4$. From here is easy to
  prove that deleting either one leg or one vertex of the body of $G$ the
  resulting graph is not a $2$-polar graph, contradicting the minimality of $G$.
  Hence, a $P_4$-sparse minimal $2$-polar obstruction is not a spider, and the
  result directly follows from \Cref{thm:connCharSparse}.
\end{proof}

By \Cref{lem:hellDAM261lem1}, any $P_4$-sparse minimal $2$-polar obstruction has
at most four connected components. With the purpose of giving the complete list
of such obstructions, we mention first some useful propositions on minimal
$(s,1)$-polar obstructions.

\begin{theorem}\cite{contrerasCogGen1}
\label{thm:allDisc(s1)obsmin}
    Let $s$ be an integer, $s\ge 2$. If $G$ is a disconnected minimal
    $(s,1)$-polar obstruction, then $G$ satisfies one of the following
    assertions:
    \begin{enumerate}
        \item $G$ is isomorphic to one of the graphs depicted in
        \Cref{fig:essentials}.

        \item $G \cong 2K_{s+1}$.

        \item $G \cong K_2 + (2K_1\oplus K_s)$.

        \item $G \cong K_1 + (C_4\oplus K_{s-1})$.
    \end{enumerate}
\end{theorem}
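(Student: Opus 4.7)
The plan is to classify disconnected minimal $(s,1)$-polar obstructions through a structural analysis of how their connected components relate to any $(s,1)$-polar partition $(A,B)$. The guiding observation is that $B$ is a clique, hence connected, so $B$ lies in a single component of $G$; meanwhile $G[A]$ is complete $s$-partite, and as such is either connected (whenever at least two of its parts are nonempty) or an independent set. Consequently, a disconnected $(s,1)$-polar graph has essentially two possible shapes: a two-component union of a connected complete $s$-partite graph with a clique, or a ``clique-carrying'' component together with some isolated vertices coming from $A$. I would exploit this rigidity to pin down the structure of any minimal obstruction.

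First I would bound the number of components of $G$: in the spirit of \Cref{lem:hellDAM261lem1}, a disconnected minimal $(s,1)$-polar obstruction must have at most three connected components, since a larger number of components would allow one to assemble a trivial $(s,1)$-polar partition of $G$ itself. Then I would split into cases according to the number of components. For the two-component case $G = G_1 + G_2$, the minimality forces $(G_1 - v) + G_2$ and $G_1 + (G_2 - v)$ to admit $(s,1)$-polar partitions for every vertex $v$, while $G$ does not; applying the structural dichotomy above to each such partition drastically narrows the possibilities. I would expect this to isolate $2K_{s+1}$ (two cliques whose size is exactly at the forbidden threshold), $K_2 + (2K_1 \oplus K_s)$, where $2K_1 \oplus K_s$ is the smallest complete multipartite graph that forces a $K_{s+1}$, and $K_1 + (C_4 \oplus K_{s-1})$, where $C_4 \oplus K_{s-1}$ is the smallest complete multipartite graph that forces a $\overline{P_3}$ inside $A$ once paired with a clique component. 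The three-component case, heavily constrained by the forced presence of isolated vertices in $A$, should collapse into the graphs listed in \Cref{fig:essentials}.

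The main obstacle is ensuring the exhaustiveness of the case analysis: it is not sufficient to verify that the listed graphs are indeed minimal $(s,1)$-polar obstructions; one must derive them from the structural constraints, and in particular justify that $2K_1 \oplus K_s$ and $C_4 \oplus K_{s-1}$ are the only ``cores'' that can appear alongside a clique component. This requires tracking, for each candidate two-component structure $G_1 + G_2$, how the minimality condition forces $G_1$ to be one of a very short list of near-complete-$s$-partite graphs, and showing that no other small obstruction escapes the analysis. A secondary technical hurdle is uniformity in $s$: the arguments must be phrased so that they apply to every $s \ge 2$ at once, which is delicate for the $\overline{P_3}$-free side since the role of $K_{s-1}$ in $C_4 \oplus K_{s-1}$ degenerates when $s$ is small.
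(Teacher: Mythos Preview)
This theorem is quoted from \cite{contrerasCogGen1} and is not proved in the present paper, so there is no in-paper argument to compare your outline against; I can only assess the outline on its own terms.

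Your opening observation is correct: in any $(s,1)$-polar partition $(A,B)$ the clique $B$ lies in a single component, and $G[A]$ is either connected or an independent set. The bound of at most three components for a minimal obstruction is also right and can be obtained by an argument parallel to \Cref{lem:hellDAM261lem1}.

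The case split you propose, however, does not line up with the conclusion. You expect the two-component case to produce exactly the three $s$-dependent families and the three-component case to produce the graphs of \Cref{fig:essentials}. But of the seven graphs in \Cref{fig:essentials}, only $E_1 = K_1 + 2K_2$ and $E_3 = C_4 + 2K_1$ have three components; the other five ($E_2 = 2P_3$, $E_7$, $E_{10}$, $E_{11}$, $E_{12}$) have exactly two. So the two-component analysis must output \emph{both} most of the essentials and all three $s$-dependent families, and your sketch of that case is not nearly fine enough to do this. In particular, the heuristic that $2K_1 \oplus K_s$ and $C_4 \oplus K_{s-1}$ are the only possible ``cores'' alongside a clique component is simply false at the level of generality you state it: $2P_3$, $K_1 + C_5$, $K_1 + P$, etc., all arise in the two-component case without fitting that template.

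A dichotomy that does organize the list is the one suggested by the caption of \Cref{fig:essentials}: the seven essentials are precisely the disconnected minimal $(\infty,1)$-polar obstructions, while each of $2K_{s+1}$, $K_2 + (2K_1 \oplus K_s)$, and $K_1 + (C_4 \oplus K_{s-1})$ is $(\infty,1)$-polar but not $(s,1)$-polar. So the natural first split is whether $G$ is $(\infty,1)$-polar. If not, $G$ contains a minimal $(\infty,1)$-polar obstruction, and minimality of $G$ as an $(s,1)$-obstruction forces equality; this reduces to knowing the disconnected minimal $(\infty,1)$-polar obstructions. If $G$ is $(\infty,1)$-polar, then $G$ has a partition into a complete multipartite part and a clique, and the failure of $(s,1)$-polarity comes solely from the multipartite side having more than $s$ parts; now your component-based reasoning can be applied to pin down the three $s$-dependent families. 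Your outline is missing this layer, and without it the two-component case will not close.
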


\begin{figure}[ht!]
\centering
\begin{tikzpicture}

\begin{scope}[scale=0.8]
\begin{scope}[scale=1]
\node [vertex] (1) at (0,0)[]{};
\node [vertex] (2) at (1,0)[]{};
\node [vertex] (3) at (0,0.8)[]{};
\node [vertex] (4) at (1,0.8)[]{};
\node [vertex] (5) at (0.5,1.5)[]{};

\foreach \from/\to in {1/2,3/4}
  \draw [edge] (\from) to (\to);

\node (1) at (0.5,-0.75){$E_1 = K_1+2K_2$};
\end{scope}

\begin{scope}[xshift=3.5cm, scale=1]
\node [vertex] (1) at (0,0)[]{};
\node [vertex] (2) at (0,0.9)[]{};
\node [vertex] (3) at (0,1.8)[]{};
\node [vertex] (4) at (1,0)[]{};
\node [vertex] (5) at (1,0.9)[]{};
\node [vertex] (6) at (1,1.8)[]{};

\foreach \from/\to in {1/2,2/3,4/5,5/6}
  \draw [edge] (\from) to (\to);

\node (1) at (0.5,-0.75){$E_2 = 2P_3$};
\end{scope}

\begin{scope}[xshift=7cm, scale=1]
\node [vertex] (1) at (0,0)[]{};
\node [vertex] (2) at (1,0)[]{};
\node [vertex] (3) at (0,1)[]{};
\node [vertex] (4) at (1,1)[]{};
\node [vertex] (5) at (0,1.8)[]{};
\node [vertex] (6) at (1,1.8)[]{};

\foreach \from/\to in {1/2,1/3,3/4,2/4}
  \draw [edge] (\from) to (\to);

\node (1) at (0.5,-0.75){$E_3 = C_4+2K_1$};
\end{scope}

\begin{scope}[xshift=11cm, scale=1]
\node [vertex] (1) at (0.5,0)[]{};
\node [vertex] (2) at (-0.1,0.6)[]{};
\node [vertex] (3) at (0.5,1.2)[]{};
\node [vertex] (4) at (1.1,0.6)[]{};
\node [vertex] (5) at (0.5,0.6)[]{};
\node [vertex] (6) at (0.5,1.8)[]{};

\foreach \from/\to in {1/2,2/3,3/4,4/1,1/5,2/5,4/5}
  \draw [edge] (\from) to (\to);

\node (1) at (0.5,-0.7){$E_7 = K_1+\overline{P_3+K_2}$};
\end{scope}

\begin{scope}[xshift=1.75cm, yshift=-3cm, scale=0.85]
\node [vertex] (1) at (126:0.8)[]{};
\node [vertex] (2) at (54:0.8)[]{};
\node [vertex] (3) at (342:0.8)[]{};
\node [vertex] (4) at (270:0.8)[]{};
\node [vertex] (5) at (198:0.8)[]{};
\node [vertex] (6) at (90:1.33)[]{};

\foreach \from/\to in {1/2,2/3,3/4,4/5,5/1}
  \draw [edge] (\from) to (\to);

\node (1) at (0,-1.72){$E_{10} = K_1+C_5$};
\end{scope}

\begin{scope}[xshift=6.25cm, yshift=-3cm, scale=0.8]
\node [vertex] (1) at (-2,0)[]{};
\node [vertex] (2) at (-1.2,0.8)[]{};
\node [vertex] (3) at (-0.4,0)[]{};
\node [vertex] (4) at (-1.2,-0.8)[]{};
\node [vertex] (5) at (0.8,0)[]{};
\node [vertex] (6) at (0,1.45)[]{};

\foreach \from/\to in {1/2,2/3,3/4,4/1,3/5}
  \draw [edge] (\from) to (\to);

\node (1) at (-0.2,-1.8){$E_{11} = K_1+P$};
\end{scope}

\begin{scope}[xshift=9cm, yshift=-2.7cm, scale=1]
\node [vertex] (1) at (0,-1)[]{};
\node [vertex] (2) at (0,-0.2)[]{};
\node [vertex] (3) at (1,-0.2)[]{};
\node [vertex] (4) at (1,-1)[]{};
\node [vertex] (5) at (0.5,0.3)[]{};
\node [vertex] (6) at (1,0.85)[]{};

\foreach \from/\to in {1/2,2/3,3/4,4/1,2/5,5/3}
  \draw [edge] (\from) to (\to);

\node (1) at (0.5,-1.7){$E_{12} = K_1+\overline{P_5}$};
\end{scope}

\end{scope}

\end{tikzpicture}

\caption{Some minimal $(\infty,1)$-polar obstructions.}
\label{fig:essentials}
\end{figure}
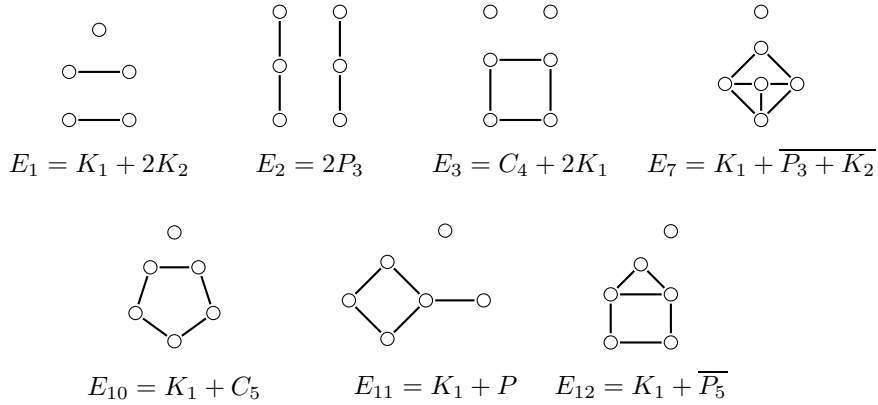

\begin{proposition}\cite{contrerasCogGen1}
\label{cor:spiderNOobsmin}
Let $s$ be a positive integer. Any $P_4$-sparse minimal $(s,1)$-polar
obstruction $G$ is a cograph. In consequence, either $G$ or its complement is
disconnected.
\end{proposition}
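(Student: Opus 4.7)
The plan is to combine the structural theorem \Cref{thm:connCharSparse} with the classification in \Cref{thm:allDisc(s1)obsmin}, inducting on $|V(G)|$. Since $G$ is $P_4$-sparse, \Cref{thm:connCharSparse} splits the argument into three exclusive cases: (a) $G$ is disconnected, (b) $\overline{G}$ is disconnected, or (c) $G$ is a spider. Once $G$ is shown to be a cograph, the ``in consequence'' clause follows at once from item 3 of \Cref{theo:cogChar}.

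Case (c) can be disposed of following the template of \Cref{pro:p4sp2polSIIsplit} and its corollary: one shows that any spider admitting an $(s,1)$-polar partition is already a split graph. Given such a partition $(A,B)$ of a spider $(S,K,R)$, the clique $B$ together with the independence of $S$ (with $|S|\ge 2$) forces all but at most one vertex of $S$ into $A$ and into a single part of its complete $s$-partite structure, which in turn pushes most of $K$ into $B$; a short case analysis on where $R$ may lie then shows that $G[R]$ (or $G$ itself, when $R=\varnothing$) is a split graph, and \Cref{rem:spiderSplit} precludes $G$ from being a minimal $(s,1)$-polar obstruction.

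In case (a), for $s\ge 2$, \Cref{thm:allDisc(s1)obsmin} reduces $G$ to an explicit finite list of candidates (the case $s=1$ amounts to the split obstructions $\{2K_2,C_4,C_5\}$ and is handled directly). A direct inspection of \Cref{fig:essentials} shows that $E_{10}=K_1+C_5$, $E_{11}=K_1+P$, and $E_{12}=K_1+\overline{P_5}$ are not $P_4$-sparse, because $C_5$, the banner, and the house each contain more than one induced $P_4$ on their five vertices; the remaining candidates $E_1,E_2,E_3,E_7$ and the parametric families $2K_{s+1}$, $K_2+(2K_1\oplus K_s)$, and $K_1+(C_4\oplus K_{s-1})$ are manifestly cographs.

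The main obstacle is case (b), since the class of $(s,1)$-polar obstructions is not closed under complementation. I would handle it by induction on $|V(G)|$. Write $G=G_1\oplus G_2$ with both factors nontrivial and observe that $G_1\oplus G_2$ is $(s,1)$-polar if and only if $s$ admits a decomposition $s=s_1+s_2$ for which $G_i$ is $(s_i,1)$-polar. Letting $\sigma_i$ be the least $s'$ for which $G_i$ is $(s',1)$-polar, the non-$(s,1)$-polarity of $G$ yields $\sigma_1+\sigma_2>s$, while the minimality of $G$ forces $\sigma_{G_i-v}<\sigma_i$ for every $v\in V(G_i)$. Thus each $G_i$ is itself a minimal $(\sigma_i-1,1)$-polar obstruction, $P_4$-sparse and on strictly fewer vertices; the induction hypothesis makes each $G_i$ a cograph, hence so is the join $G$.
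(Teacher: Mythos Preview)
The paper does not prove this proposition at all: it is imported verbatim from \cite{contrerasCogGen1}, so there is no in-paper argument to compare your attempt against. Your outline is nonetheless a correct strategy and would go through with minor repairs.

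Cases (a) and (b) are essentially fine. In (a), \Cref{thm:allDisc(s1)obsmin} is stated for general graphs, so it legitimately applies, and your inspection of which candidates survive the $P_4$-sparse constraint is right. In (b), one point you glide over is that $\sigma_1,\sigma_2$ are finite (indeed $\le s$): this follows because deleting any vertex of $G_2$ leaves an $(s,1)$-polar join, forcing $G_1$ to be $(s',1)$-polar for some $s'\le s$, and symmetrically for $G_2$. With that, the inequality $\sigma_1+\sigma_2\ge s+1$ and the minimality argument give exactly that each $G_i$ is a $P_4$-sparse minimal $(\sigma_i-1,1)$-polar obstruction on fewer vertices, and induction finishes. (The boundary $\sigma_i=0$ just means $G_i$ is complete, hence already a cograph.)

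Case (c) has a genuine gap in the write-up. You prove ``spider $(s,1)$-polar $\Rightarrow$ split'', and then invoke \Cref{rem:spiderSplit}. But a minimal obstruction $G$ is \emph{not} $(s,1)$-polar, so that implication is vacuous for it; \Cref{rem:spiderSplit} only rules out spiders whose head is split. The template you cite has two halves, and you wrote only the first. What is missing: from the equivalence ``spider $(s,1)$-polar $\Leftrightarrow$ head split'' (which does hold---for any leg $s\in S\cap A$ one gets $R\cap A$ in the same part as $s$, hence independent, so $G[R]$ is split), minimality forces $G[R]$ to be a $P_4$-sparse minimal split obstruction, i.e.\ $G[R]\in\{2K_2,C_4\}$. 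Then deleting a leg $s_0$ still leaves at least one leg in $A$, so the same argument shows $G-s_0$ is not $(s,1)$-polar, contradicting minimality. You should state this second half explicitly rather than folding it into the phrase ``following the template''.
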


\begin{proposition}\cite{contrerasCogGen1}
\label{rem:psE}
There are exactly nine $P_4$-sparse minimal $(2,1)$-polar obstructions; they are
the graphs $E_1, \dots, E_9$ depicted in
\Cref{fig:essentials,fig:P4ext(21)obsmin}.
\end{proposition}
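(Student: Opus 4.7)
The approach is to combine two previously cited structural results: \Cref{cor:spiderNOobsmin} forces every $P_4$-sparse minimal $(s,1)$-polar obstruction to be a cograph, and \Cref{thm:allDisc(s1)obsmin} lists the disconnected minimal $(s,1)$-polar obstructions for any $s \ge 2$. Applying the first to our setting, any $P_4$-sparse minimal $(2,1)$-polar obstruction $G$ is a cograph, so by \Cref{theo:cogChar} either $G$ or $\overline{G}$ is disconnected. I would split the proof along this dichotomy.

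In the disconnected case, I would specialize \Cref{thm:allDisc(s1)obsmin} to $s=2$. The resulting candidate list comprises the seven graphs of \Cref{fig:essentials} together with $2K_3$, $K_2+(2K_1\oplus K_2)$, and $K_1+(C_4\oplus K_1)$. I would immediately discard $E_{10}$, $E_{11}$, $E_{12}$, since $C_5$, the banner $P$, and $\overline{P_5}$ appear as induced subgraphs and are precisely among the forbidden graphs characterizing the $P_4$-sparse class. For each of the remaining seven cograph candidates I would verify explicitly (i) that it fails to admit a $(2,1)$-polar partition, and (ii) that every vertex-deleted subgraph does; this yields $E_1,E_2,E_3,E_7$ from \Cref{fig:essentials} together with three of the graphs drawn in \Cref{fig:P4ext(21)obsmin}.

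For the case in which $\overline{G}$ is disconnected, I would work directly with $\overline{G}$, which is a $P_4$-sparse minimal $(1,2)$-polar obstruction (complementation swaps the $(s,k)$-parameters and $P_4$-sparseness is preserved under complement). Using \Cref{lem:hellDAM261lem1} with $k=2$, $\overline{G}$ has at most four connected components, at most three trivial components, and if any trivial component exists all but one of the remaining components must be complete. A short case analysis organized by the number of isolated vertices and by the structure of the unique nontrivial noncomplete component singles out exactly two such obstructions, whose complements are the two remaining graphs of \Cref{fig:P4ext(21)obsmin}. The main obstacle is not conceptual but combinatorial: verifying minimality for each candidate requires exhibiting a valid $(2,1)$- or $(1,2)$-polar partition after every single-vertex deletion. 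Each such check is routine on cographs of at most nine vertices, but the bookkeeping must be organized uniformly to guarantee that neither spurious obstructions are listed nor genuine ones omitted.
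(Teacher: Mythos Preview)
The paper does not prove this proposition; it is quoted from \cite{contrerasCogGen1} without argument, so there is no ``paper's own proof'' to compare against. I will therefore assess your proposal on its merits.

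Your disconnected case is sound: specializing \Cref{thm:allDisc(s1)obsmin} to $s=2$ and discarding the three candidates containing $C_5$, $P$, or $\overline{P_5}$ leaves exactly the seven disconnected cograph obstructions $E_1,E_2,E_3,E_6,E_7,E_8,E_9$.

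The connected case, however, has a genuine gap. You invoke \Cref{lem:hellDAM261lem1} for $\overline{G}$, but that lemma is stated for minimal $k$-polar (that is, $(k,k)$-polar) obstructions, whereas $\overline{G}$ is only a minimal $(1,2)$-polar obstruction. These are different objects: for instance $\overline{E_4}=3K_2$ is a minimal $(1,2)$-polar obstruction yet is $2$-polar (take one $K_2$ as the multipartite side and the remaining $2K_2$ as the cluster side), so it is not a $2$-polar obstruction at all and the component bounds of \Cref{lem:hellDAM261lem1} do not apply to it. Your ``short case analysis organized by the number of isolated vertices'' therefore rests on structural constraints you have not established for $(1,2)$-polar obstructions.

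To repair this, you would either need to prove an analogue of \Cref{lem:hellDAM261lem1} for minimal $(1,k)$-polar obstructions (plausible, but not available in the present paper), or argue directly from the join decomposition $G=G_1\oplus\cdots\oplus G_t$ of the connected cograph $G$, showing that any connected cograph failing $(2,1)$-polarity contains one of $E_1,\dots,E_9$; this is where the real work of Case~2 lies.
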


\begin{figure}[ht!]
\centering
\begin{tikzpicture}

\begin{scope}[scale=0.8]
\begin{scope}[yscale=0.75]
\foreach \i in {0,1}
	\foreach \j in {0,1,2}
		\node [vertex] (\i\j) at (\i-0.5,\j-1)[]{};

\foreach \i/\j in {00/11,00/12,01/10,01/12,02/10,02/11,00/01,01/02,10/11,11/12}
  \draw [edge] (\i) to (\j);

\foreach \i/\j in {00/02,12/10}
  \draw [curveEdge] (\i) to (\j);

\node (n) at (0,-1.75){$E_4 = \overline{3K_2}$};
\end{scope}

\begin{scope}[xshift=3.5cm, yscale=0.75]
\foreach \i in {0,1}
	\foreach \j in {0,1,2}
		\node [vertex] (\i\j) at (\i-0.5,\j-1)[]{};

\foreach \i/\j in {00/01,01/02,10/11,11/12,00/10,00/11,01/10,01/12,02/11,02/12}
  \draw [edge] (\i) to (\j);

\node (n) at (0,-1.8){$E_5 = \overline{K_2+C_4}$};
\end{scope}

\begin{scope}[xshift=7cm, scale=0.75]
\foreach \i in {0,...,3}
	\node [vertex] (\i) at ({(\i*90)}:0.9){};
\node [vertex] (4) at (0,0){};
\node [vertex] (5) at (1,1){};

\foreach \i in {0,...,3}
{
	\draw let \n1={int(mod(\i+1,4))} in [edge] (\i) to (\n1);
	\draw [edge] (\i) to (4);
}

\node (n) at (0,-1.8){$E_6 = K_1 + W_4$};
\end{scope}

\begin{scope}[xshift=0cm, yshift=-3.32cm, scale=0.75, yscale=1]
\foreach \i in {0,...,3}
	\node [vertex] (\i) at ($ (0,-0.5) + ({(\i*90)}:0.9) $){};
\node [vertex] (4) at (-0.6,1.2){};
\node [vertex] (5) at (0.6,1.2){};

\foreach \i in {0,...,3}
	\draw let \n1={int(mod(\i+1,4))} in [edge] (\i) to (\n1);

\foreach \i/\j in {1/3,4/5}
	\draw [edge] (\i) to (\j);

\node (n) at (0,-2.4){$E_8 = K_2+(K_2\oplus \overline{2K_1})$};
\end{scope}

\begin{scope}[xshift=3.5cm, yshift=-3.5cm, scale=0.75]
\foreach \i in {0,...,2}
	\node [vertex] (\i) at ($ (-0.3,0.6) + ({(\i*120)}:0.9) $){};
\foreach \i in {3,...,5}
	\node [vertex] (\i) at ($ (0.3,-0.6) + ({180+(\i*120)}:0.9) $){};

\foreach \i in {0,...,2}
	\draw let \n1={int(mod(\i+1,3))} in [edge] (\i) to (\n1);
\foreach \i in {3,...,5}
	\draw let \n1={int(3+mod(\i+1,3))} in [edge] (\i) to (\n1);

\node (n) at (0,-2.2){$E_9 = 2K_3$};
\end{scope}

\begin{scope}[xshift=7cm, yshift=-3.5cm, scale=0.75]
\foreach \i in {0,...,4}
	\node [vertex] (\i) at ({90+(\i*72)}:1.3){};
\node [vertex] (5) at (-0.4,-0.3){};
\node [vertex] (6) at (0.4,-0.3){};

\foreach \i in {0,...,4}
	\draw let \n1={int(mod(\i+1,5))} in [edge] (\i) to (\n1);
\foreach \i in {0,...,4}
	\foreach \j in {5,6}
		\draw [edge] (\i) to (\j);

\node (n) at (0,-2.2){$E_{13} = \overline{K_2+C_5}$};
\end{scope}

\end{scope}

\end{tikzpicture}

\caption{Some minimal $(2,1)$-polar obstructions.}
\label{fig:P4ext(21)obsmin}
\end{figure}
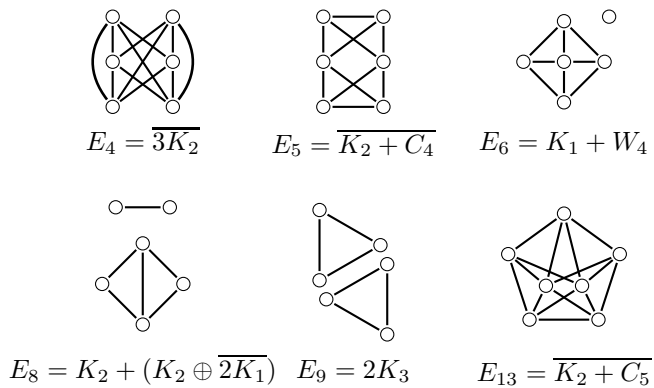

Now we have the necessary tools to prove that there are exactly three
$P_4$-sparse minimal $2$-polar obstructions with four connected components.

\begin{proposition}
\label{pro:techLem}
    Let $\ell$ be a positive integer. If $G$ is a connected $P_4$-sparse minimal
    $(1,\ell-1)$-polar obstruction which is a complete multipartite graph, then
    $G$ is isomorphic to either $K_{\ell,\ell}$ or $K_1\oplus C_4$.
\end{proposition}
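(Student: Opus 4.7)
The plan is first to characterize which connected complete multipartite graphs $K_{n_1, \dots, n_m}$ (with $m \ge 2$) admit a $(1, \ell-1)$-polar partition, and then to read off the minimal obstructions from that characterization.

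For the characterization, suppose $(A, B)$ is a $(1, \ell-1)$-polar partition of $G = K_{n_1, \dots, n_m}$. Since $A$ is independent in a complete multipartite graph, $A$ is contained in a single part, say $V_j$. The induced subgraph $G[B]$ is again complete multipartite, and since it must also be a cluster, it is $\{P_3, \overline{P_3}\}$-free; hence $G[B]$ is either a single clique or an independent set (possibly empty). If $G[B]$ is a clique, then $B$ meets each part of $G$ in at most one vertex, which forces $|V_i| \le 1$ for every $i \ne j$ and $|V_j \setminus A| \le 1$; equivalently, at most one part of $G$ has size $\ge 2$, and necessarily $\ell \ge 2$. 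If $G[B]$ is independent, then $B$ is contained in a single part $V_k$, which forces $V_i = \emptyset$ for $i \ne j, k$ and hence $m = 2$, $A = V_j$, $B = V_k$, and $|V_k| \le \ell - 1$. Combining the two cases, $G$ is $(1, \ell-1)$-polar if and only if either (i) $\ell \ge 2$ and at most one part of $G$ has size $\ge 2$, or (ii) $m = 2$ and $\min(n_1, n_2) \le \ell - 1$.

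Now I would apply minimality. If $m = 2$, then the failure of (ii) gives $\min(n_1, n_2) \ge \ell$, while minimality demands $\min(n_i - 1, n_{3-i}) \le \ell - 1$ for each $i$; combined, this forces $n_1 = n_2 = \ell$, so $G \cong K_{\ell, \ell}$. If $m \ge 3$, then the failure of (i) gives at least two parts of size $\ge 2$, say $n_1, n_2 \ge 2$. A short case analysis on single vertex deletions then forces $G \cong K_{1, 2, 2}$: deleting a vertex in a part of size $\ge 3$, in a third part of size $\ge 2$, or in a singleton part while $m \ge 4$, yields an induced subgraph that still has at least two parts of size $\ge 2$ and at least three parts in total, again failing both (i) and (ii), contradicting minimality. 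Therefore $m = 3$, $n_1 = n_2 = 2$, $n_3 = 1$, and $G \cong K_1 \oplus C_4$.

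The main delicacy is keeping the characterization in the first step tight enough that every vertex-deletion in the minimality argument can be decided without ambiguity; the observation that a complete multipartite cluster is either a single clique or an independent set does most of the work. Note that the $P_4$-sparse hypothesis is not used directly in the argument, since every complete multipartite graph is a cograph and hence $P_4$-sparse; the hypothesis is present only because we are classifying $P_4$-sparse minimal obstructions.
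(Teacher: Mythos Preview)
Your argument is correct and takes a genuinely different route from the paper. The paper handles $\ell \in \{1,2\}$ by inspection and, for $\ell \ge 3$, appeals to two results imported from the companion paper: that every $P_4$-sparse minimal $(s,1)$-polar obstruction is a cograph (so $\overline G$ is disconnected whenever $G$ is connected), together with the explicit list of disconnected minimal $(s,1)$-polar obstructions (\Cref{thm:allDisc(s1)obsmin}), applied via complementation with $s=\ell-1$. You instead work from first principles, characterizing exactly which connected complete multipartite graphs are $(1,\ell-1)$-polar and then extracting the minimal obstructions directly. Your approach is self-contained and, as you observe, makes transparent that the $P_4$-sparse hypothesis plays no role; the paper's proof is shorter on the page only because it outsources the work to previously established classifications.

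One small point worth tightening: in the $m \ge 3$ case you write that failure of (i) forces at least two parts of size $\ge 2$, but failure of (i) could also simply mean $\ell = 1$. This is harmless, since for $\ell = 1$ a connected complete multipartite graph with $m \ge 3$ parts is never a minimal $(1,0)$-polar obstruction (any vertex deletion still leaves an edge), but it is worth saying so explicitly. Note also that in the bipartite case your use of condition~(ii) alone for $G-v$ is justified because, when $m=2$, condition~(i) implies condition~(ii): if at most one part has size $\ge 2$ then $\min(n_1,n_2)\le 1\le \ell-1$.
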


\begin{proof}
	Clearly, if $\ell = 1$, $G \cong K_2$, while if $\ell = 2$, $G \cong C_4$.
	For $\ell \ge 3$, we have from \Cref{cor:spiderNOobsmin} that $\overline{G}$
	is a disconnected graph, and it follows from \Cref{thm:allDisc(s1)obsmin}
	that $G$ is isomorphic to either $K_{\ell, \ell}$ or $K_1 \oplus C_4$.
\end{proof}

\begin{corollary}
\label{cor:Kobsmink+2}
    If $G$ is a $P_4$-sparse graph, then $G$ is a minimal $2$-polar obstruction
    with exactly 4 connected components if and only if $G \cong \ell K_1 +
    (3-\ell) K_2 + K_{\ell,\ell}$ for some integer $\ell \in \{1, 2, 3\}$.
\end{corollary}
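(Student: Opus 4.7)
The plan is to derive this as a direct application of \Cref{lem:charExtrKobsmin} (specialised to $k = 2$) combined with the structural description of connected $P_4$-sparse complete multipartite minimal $(1,\ell-1)$-polar obstructions given in \Cref{pro:techLem}. The only work left is then to discard one of the two candidates provided by \Cref{pro:techLem} and to verify, for small $\ell$, the extra $(1,\ell)$-polarity condition demanded by \Cref{lem:charExtrKobsmin}.

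In more detail, applying \Cref{lem:charExtrKobsmin} with $k = 2$ tells us that $G$ is a minimal $2$-polar obstruction with exactly four connected components if and only if $G \cong \ell K_1 + (3-\ell) K_2 + G'$ for some $\ell \in \{1, 2, 3\}$, where $G'$ is a connected complete bipartite graph that is a minimal $(1,\ell-1)$-polar obstruction and, additionally, is $(1,\ell)$-polar whenever $\ell \le 2$. Since the class of $P_4$-sparse graphs is closed under taking induced subgraphs, $G$ being $P_4$-sparse forces $G'$ to be $P_4$-sparse as well, so I would restrict the search for $G'$ to connected $P_4$-sparse graphs satisfying the listed properties.

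By \Cref{pro:techLem}, any connected $P_4$-sparse complete multipartite graph that is a minimal $(1,\ell-1)$-polar obstruction is isomorphic to either $K_{\ell,\ell}$ or $K_1 \oplus C_4$. The second option is ruled out immediately, because $K_1 \oplus C_4$ contains a triangle (the universal vertex together with any edge of $C_4$) and is therefore not bipartite, contrary to the requirement from \Cref{lem:charExtrKobsmin} that $G'$ be complete $2$-partite. Hence $G' \cong K_{\ell,\ell}$.

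Finally, I would check the remaining $(1,\ell)$-polarity condition in the two small cases: for $\ell = 1$, $K_{1,1} \cong K_2$ is trivially a split graph and so is $(1,1)$-polar; for $\ell = 2$, $K_{2,2} \cong C_4$ admits the partition into its two colour classes, whose second part induces $2K_1$, a $2$-cluster, and is therefore $(1,2)$-polar. Conversely, for each $\ell \in \{1,2,3\}$ the graph $K_{\ell,\ell}$ satisfies all hypotheses of \Cref{lem:charExtrKobsmin}, so the lemma guarantees that $\ell K_1 + (3-\ell) K_2 + K_{\ell,\ell}$ is indeed a minimal $2$-polar obstruction with four components. No step looks like a serious obstacle: the main content is packaged into \Cref{lem:charExtrKobsmin,pro:techLem}, and the rest is a brief case analysis over the three admissible values of~$\ell$.
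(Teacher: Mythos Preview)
Your proposal is correct and follows essentially the same route as the paper: specialise \Cref{lem:charExtrKobsmin} to $k=2$, invoke \Cref{pro:techLem} to pin down $G'$, discard $K_1 \oplus C_4$ for not being bipartite, and observe that $K_{\ell,\ell}$ is $(1,\ell)$-polar. The paper does all this more tersely (it absorbs the elimination of $K_1 \oplus C_4$ into the phrase ``complete bipartite'' and calls the $(1,\ell)$-polarity of $K_{\ell,\ell}$ trivial), but the logical structure is identical.
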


\begin{proof}
	Let $G$ be a $P_4$-sparse graph. By \Cref{lem:charExtrKobsmin} we have that
	$G$ is a minimal $2$-polar obstruction with precisely four connected
	components if and only if $G \cong \ell K_1 + (3 - \ell) K_2 + G'$, where
	$\ell \in \{1, 2, 3\}$, and $G'$ is a connected complete bipartite graph
	which is a minimal $(1, \ell-1)$-polar obstruction such that, if $\ell \ne
	3$, $G'$ is a $(1, \ell)$-polar graph. In addition, we have from
	\Cref{pro:techLem} that the only connected $P_4$-sparse minimal $(1,
	\ell-1)$-polar obstruction which is a complete bipartite graph is $K_{\ell,
	\ell}$. The result follows since $K_{\ell, \ell}$ trivially is a $(1,
	\ell)$-polar graph.
\end{proof}

Hannebauer \cite{hannebauerTH'10} proved that, for any nonnegative integers $s$
and $k$, any $P_4$-sparse minimal $(s,k)$-polar obstruction has at most
$(s+1)(k+1)$ vertices. Thus, we have by \Cref{lem:hellDAM261seven} that any
$P_4$-sparse minimal $2$-polar obstruction has at least seven and at most nine
vertices. The following three lemmas completely characterize such minimal
obstructions depending on their order; the proofs are simple generalizations of
the analogous proofs given in \cite{hellDAM261} for cographs.

\begin{lemma}
    The disconnected $P_4$-sparse minimal $2$-polar obstructions on 7 vertices
    are exactly the graphs $F_1, \dots, F_5$ depicted in \Cref{fig:2obsmin7v}.
\end{lemma}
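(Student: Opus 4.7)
The plan is to handle both directions of the equivalence. For the direct implication---that each $F_i$ is a disconnected $P_4$-sparse minimal $2$-polar obstruction on seven vertices---I would invoke the remark preceding the lemma: it suffices to verify one representative from $\{F_1,\dots,F_5\}$ and then appeal to \Cref{lem:hellDAM261partialComp} together with closure of the list under partial complement. The bulk of the work is the converse: letting $G$ be a disconnected $P_4$-sparse minimal $2$-polar obstruction with $|V(G)|=7$, I would show $G$ is isomorphic to some $F_i$ by a case analysis on the number of components of $G$, which by \Cref{lem:hellDAM261lem1}(1) is two, three, or four.

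The four-component case falls out of \Cref{cor:Kobsmink+2}: among $\ell\in\{1,2,3\}$, only $\ell=1$ satisfies $7 = \ell + 2(3-\ell) + 2\ell$, so $G\cong K_1 + 3K_2$. For the three-component case, \Cref{lem:hellDAM261seven}(2) guarantees an isolated vertex, after which items (4) and (5) of \Cref{lem:hellDAM261lem1} (combined with $G\not\cong 3K_3$) restrict $G$ to one of the patterns $2K_1 + X$ with $|V(X)|=5$ or $K_1+K_2+Y$ with $|V(Y)|=4$, where each of $X$ and $Y$ is connected and noncomplete. Enumerating such $P_4$-sparse graphs via \Cref{thm:connCharSparse} and discarding any candidate that either is $2$-polar or has a vertex deletion that is not $2$-polar would leave precisely the three-component members of $\{F_1,\dots, F_5\}$.

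The two-component case requires the most care. A trivial component is impossible: \Cref{lem:hellDAM261lem1}(4) would force the other component to be complete, giving $G\cong K_1+K_6$, which is split and hence $2$-polar. So both components are nontrivial; writing $G = H_1 + H_2$ with $2\le |V(H_i)|$ summing to $7$, the minimality of $G$ implies that for each $v\in V(H_i)$ the graph $H_i - v$ must admit a polar partition whose shape, combined with $H_{3-i}$, produces a $2$-polar partition of $G-v$. Together with \Cref{thm:connCharSparse} and the explicit list of $P_4$-sparse $(2,1)$-polar obstructions from \Cref{rem:psE}, this pins down the candidate pairs $(H_1, H_2)$. The main obstacle here is tracking simultaneously the three possible shapes $(2,0)$, $(1,2)$, or $(2,1)$ that the polar partition of $G-v$ could take, while maintaining $P_4$-sparseness of both halves and non-$2$-polarity of $G$; the small vertex counts ($|V(H_i)|\le 5$), closure under partial complement via \Cref{lem:hellDAM261partialComp}, and the fact that the analogous cograph proof in \cite{hellDAM261} already carries out the bookkeeping should keep this reduction routine.
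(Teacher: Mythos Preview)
Your two-component analysis contains a genuine error. You claim that a trivial component is impossible because \Cref{lem:hellDAM261lem1}(4) ``would force the other component to be complete.'' That is a misreading of item~(4): it says that if $H$ has at least one trivial component then $H$ has \emph{at most one} noncomplete component. A trivial component $K_1$ is itself complete, so in the two-component situation $K_1 + X$ the single component $X$ is perfectly allowed to be noncomplete. In fact, both $F_3$ and $F_5$ in \Cref{fig:2obsmin7v} have exactly this shape: each is $K_1$ together with a connected noncomplete graph on six vertices. Your argument as written would miss both of them.

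The paper proceeds quite differently from your direct component-count enumeration. Rather than splitting into separate two-, three-, and four-component cases, it exploits \Cref{lem:hellDAM261partialComp} as a \emph{reduction} device: if some sequence of partial complementations of $H$ yields a graph with four components, \Cref{cor:Kobsmink+2} finishes the argument; otherwise one can always arrange, via partial complementation and the argument of Lemma~7 in \cite{hellDAM261}, that $H$ has exactly two components, one of which is trivial. From there the nontrivial component must contain a connected $P_4$-sparse minimal $(2,1)$-polar obstruction on at most six vertices, and \Cref{rem:psE} leaves only $K_1+2K_2$, $\overline{3K_2}$, and $2K_2\oplus 2K_1$ as candidates, which are dispatched individually. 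This bypasses the delicate both-components-nontrivial bookkeeping you anticipated.
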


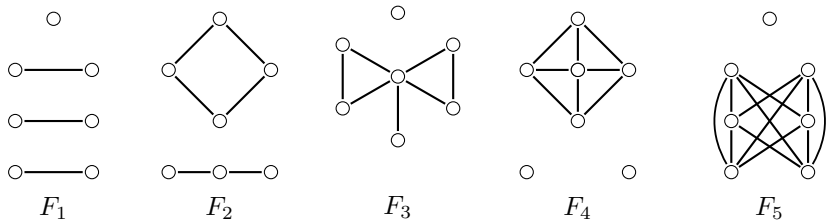
\begin{figure}[ht!]
\centering
\begin{tikzpicture}

\begin{scope}[scale=0.8]

\begin{scope}[scale=0.85]
\foreach \i in {0,1}
	\foreach \j in {0,1,2}
		\node [vertex] (\i\j) at (\i*1.5,\j)[]{};
\node [vertex] (7) at (0.75,3)[]{};

\foreach \j in {0,1,2}
	\draw [edge]  (0\j) to (1\j);
\node [rectangle] (n) at (0.75,-0.7){$F_1$};
\end{scope}

\begin{scope}[scale=0.85, xshift=4cm, yshift=2cm]
\foreach \i in {0,...,2}
	\node [vertex] (\i) at (\i-1,-2)[]{};
\foreach \i in {3,...,6}
	\node [vertex] (\i) at ({(\i*90)}:1)[]{};

\foreach \i in {0,1}
	\draw let \n1={int(\i+1)} in [edge] (\i) to (\n1);

\foreach \i/\j in {3/4,4/5,5/6,6/3}
	\draw [edge] (\i) to (\j);

\node [rectangle] (n) at (0,-2.7){$F_2$};
\end{scope}

\begin{scope}[scale=1.06, xshift=6cm, yshift=1.5cm]
\foreach \i in {0,...,5}
	\node [vertex] (\i) at ({150+(\i*60)}:1)[]{};
\node [vertex] (6) at (0,0)[]{};

\foreach \i in {0,...,4}
	\draw [edge] (\i) to (6);

\foreach \i/\j in {0/1,3/4}
	\draw [edge] (\i) to (\j);

\node [rectangle] (n) at (0,-2.05){$F_3$};
\end{scope}

\begin{scope}[scale=0.85, xshift=11cm, yshift=2cm]
\node [vertex] (0) at (-1,-2)[]{};
\node [vertex] (1) at (1,-2)[]{};
\node [vertex] (2) at (0,0)[]{};
\foreach \i in {3,...,6}
	\node [vertex] (\i) at ({(\i*90)}:1)[]{};
\node [vertex] (2) at (0,0)[]{};

\foreach \i/\j in {3/4,4/5,5/6,6/3,2/3,2/4,2/5,2/6}
	\draw [edge] (\i) to (\j);

\node [rectangle] (n) at (0,-2.7){$F_4$};
\end{scope}

\begin{scope}[scale=0.85, xshift=14cm, yshift=0cm]
\foreach \i in {0,1}
	\foreach \j in {0,1,2}
		\node [vertex] (\i\j) at (\i*1.5,\j)[]{};
\node [vertex] (7) at (0.75,3)[]{};

\foreach \i\j in {00/11,00/12,01/10,01/12,
02/10,02/11,00/01,01/02,10/11,11/12}
	\draw [edge]  (\i) to (\j);

\foreach \i\j in {00/02,12/10}
	\draw [-, shorten <=1pt, shorten >=1pt, >=stealth, line width=.7pt, bend
    left=30]  (\i) to (\j);

\node [rectangle] (n) at (0.75,-0.7){$F_5$};
\end{scope}

\end{scope}

\end{tikzpicture}

\caption{$P_4$-sparse minimal $2$-polar obstructions on 7 vertices.}
\label{fig:2obsmin7v}
\end{figure}

\begin{proof}
    Let $H$ be a disconnected $P_4$-sparse minimal $2$-polar obstruction on
    seven vertices. By the observation after \Cref{lem:hellDAM261partialComp} it
    is enough to prove that $H \cong F_i$ for some $i \in \{1, \dots, 5\}$. If
    $H$ has four connected components or it can be transformed by a sequence of
    partial complementations into a graph with four components, it follows from
    \Cref{cor:Kobsmink+2,lem:hellDAM261partialComp} that $H$ is isomorphic to
    $F_i$ for some $i \in \{ 1, \dots, 5 \}$. Thus, we can assume that any graph
    obtained from $H$ by partial complementations has at most three components;
    from here we can replicate the argument in Lemma 7 of \cite{hellDAM261} to
    assume that $H$ is a graph with precisely two connected components, one of
    them being a trivial graph.

    Since $H$ is not a $2$-polar graph, its nontrivial component must contain a
    minimal $(2,1)$-polar obstruction $H'$ as an induced subgraph. Moreover,
    $H'$ cannot be a disconnected graph on six vertices, so we have from
    \Cref{rem:psE} that $H \in \{ K_1 + 2K_2, \overline{3K_2}, 2K_2 \oplus 2K_1
    \}$. If $H' \cong \overline{3K_2}$, $H$ is the graph $F_5$ in
    \Cref{fig:2obsmin7v}. If $H' \cong 2K_2\oplus 2K_1$, is straightforward to
    verify that $H$ is a $(1,2)$-polar graph, which cannot occur. Otherwise, if
    $H' \cong K_1 + 2K_2$, we have that $H \cong F_3$, because $P_4$-sparse
    graphs are $\{\overline P, P_5\}$-free and $H'$ is contained in a connected
    component of $H$ on six vertices.
\end{proof}

\begin{lemma}
\label{lem:p4sp2obsminOrder9}
The disconnected $P_4$-sparse minimal $2$-polar obstructions on 9 vertices are
the graphs $F_{21}, \dots, F_{24}$ depicted in \Cref{fig:2obsmin9v}.
\end{lemma}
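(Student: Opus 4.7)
The plan is to mirror the strategy used in the proof of the 7-vertex lemma. Let $H$ be a disconnected $P_4$-sparse minimal $2$-polar obstruction on 9 vertices. By Hannebauer's theorem, $|V_H| \le (s+1)(k+1) = 9$, so 9 vertices is the extremal size; by Lemma \ref{lem:hellDAM261lem1}(1), $H$ has between 2 and 4 connected components.

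If $H$ has four components, then Corollary \ref{cor:Kobsmink+2} forces $H \cong \ell K_1 + (3-\ell) K_2 + K_{\ell,\ell}$ for some $\ell \in \{1,2,3\}$, and the vertex count $\ell + 2(3-\ell) + 2\ell = \ell + 6 = 9$ yields $\ell = 3$, so $H \cong 3K_1 + K_{3,3}$, one of $F_{21}, \dots, F_{24}$. By Lemma \ref{lem:hellDAM261partialComp} and the observation following it, the partial-complementation closure of $3K_1 + K_{3,3}$ consists entirely of minimal $2$-polar obstructions on 9 vertices; a direct enumeration shows this closure contains exactly four disconnected graphs, which coincide with $F_{21}, \dots, F_{24}$ (the unique connected member being the complement of one of them).

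The remaining task is to show that every disconnected minimal obstruction $H$ on 9 vertices lies in this closure. If some sequence of partial complementations transforms $H$ into a 4-component graph, then by the above and the symmetry of the partial-complement relation, $H$ itself lies in the closure and we are done. Otherwise, I may assume that every graph obtained from $H$ by partial complementations has at most 3 components; in particular $H \not\cong 3K_3$, so by Lemma \ref{lem:hellDAM261lem1}(5) every complete component of $H$ is $K_1$ or $K_2$. Following the reduction used in the 7-vertex lemma (Lemma 7 of \cite{hellDAM261}), I may further assume that $H$ has exactly two components, one of which is an isolated vertex; the other is a connected $P_4$-sparse non-$2$-polar graph $C$ on 8 vertices.

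The main obstacle is the case analysis on $C$. Since $H$ is not $2$-polar, $C$ contains a minimal $(2,1)$-polar obstruction $H'$ as an induced subgraph; by Proposition \ref{rem:psE}, $H' \in \{E_1, \dots, E_9\}$ has 5 or 6 vertices. For each such $H'$, I would enumerate the possible ways of adjoining the remaining 2 or 3 vertices of $C$ to $H'$ so that $C$ is connected and $P_4$-sparse (hence $\{C_5, P_5, \overline{P_5}, P, \overline{P}, F, \overline{F}\}$-free), $H$ is not $2$-polar, and every vertex deletion of $H$ is $2$-polar. The forbidden induced subgraph conditions, combined with the minimality constraints, sharply restrict the neighborhoods of the added vertices; each surviving candidate turns out to coincide with some partial complement of $3K_1 + K_{3,3}$, closing the analysis.
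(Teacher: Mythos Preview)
Your broad outline (handle the 4-component case via Corollary~\ref{cor:Kobsmink+2}, then close under partial complements) is sound, but two points deserve attention.

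First, a slip: once you have $H = K_1 + C$, the component $C$ is \emph{not} ``non-$2$-polar''. By minimality of $H$, deleting the isolated vertex yields a $2$-polar graph, so $C$ is $2$-polar; what fails is that $C$ is not $(2,1)$-polar (else $H$ would be $(2,2)$-polar). Your conclusion that $C$ contains some $E_i$ survives, but the justification must be stated correctly.

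Second, and more substantively, the reduction you invoke (``following the 7-vertex lemma, assume $H$ has exactly two components, one isolated'') is not the one used for nine vertices. The paper, following Lemma~8 of \cite{hellDAM261}, reduces instead to the case where $H$ has \emph{three} components, \emph{two} of them isolated vertices, so that the nontrivial component $B_3$ has only seven vertices. At that point Theorem~\ref{thm:connCharSparse} is applied directly to $B_3$: either $B_3$ is a spider, in which case its head has at most three vertices and hence induces a split graph, so $B_3$ (and then $H$) is split---a contradiction; or $B_3 = T_1 \oplus T_2$, and the cograph argument of \cite{hellDAM261} goes through verbatim. Your route via an $8$-vertex connected component $C$ and an exhaustive extension analysis over $E_1,\dots,E_9$ could in principle succeed, but you would first have to justify the two-component reduction for nine-vertex graphs (it is not simply Lemma~7 of \cite{hellDAM261}, which uses the specifics of seven vertices), and then actually carry out a substantial case enumeration. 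The paper's three-component reduction avoids all of this: the only genuinely new case beyond cographs is the spider case, and it is dispatched in one line.
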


\begin{figure}[ht!]
\centering
\begin{tikzpicture}

\begin{scope}[scale=0.8]

\begin{scope}[scale=0.85]
\foreach \i in {0,1}
	\foreach \j in {1,2,3}
		\node [vertex] (\i\j) at (\i*2,\j)[]{};
\foreach \i in {0,1,2}
	\node [vertex] (\i) at (\i,0)[]{};

\foreach \i in {01,02,03}
	\foreach \j in {11,12,13}
	\draw [edge]  (\i) to (\j);

\node [rectangle] (n) at (1,-0.85){$F_{21}$};
\end{scope}

\begin{scope}[scale=0.63, xshift=6.4cm, yshift=2cm]
\foreach \i in {0,...,2}
	\node [vertex] (\i) at ($ (-0.5,1.2) + (\i*120:1) $)[]{};

\foreach \i in {3,...,5}
	\node [vertex] (\i) at ($ (0.5,0) + ({180+(\i*120)}:1) $)[]{};

\foreach \i in {6,...,8}
	\node [vertex] (\i) at ($ (-0.5,-1.2) + (\i*120:1) $)[]{};

\foreach \i/\j in {0/1,1/2,2/0,3/4,4/5,5/3,6/7,7/8,8/6}
	\draw [edge] (\i) to (\j);

\node [rectangle] (n) at (0,-3.15){$F_{22}$};
\end{scope}

\begin{scope}[scale=0.765, xshift=7.6cm, yshift=0cm]
\foreach \j in {0,1,3}
	\foreach \i in {0,2}
		\node [vertex] (\i\j) at (\i,\j)[]{};
\foreach \j in {1,2,3}
	\node [vertex] (1\j) at (1,\j)[]{};

\foreach \i in {03,13,23,01,11,21}
	\draw [edge] (\i) to (12);

\foreach \i/\j in {03/13,13/23,01/11,11/21,00/20}
	\draw [edge] (\i) to (\j);

\foreach \i/\j in {03/23,21/01}
	\draw [-, shorten <=1pt, shorten >=1pt, >=stealth, line width=.7pt, bend
    left=40] (\i) to (\j);

\node [rectangle] (n) at (1,-0.95){$F_{23}$};
\end{scope}

\begin{scope}[scale=0.765, xshift=11cm, yshift=0cm]
\foreach \i/\j in {0/3,1/3,2/3,0/1,1/1,2/1,1/0}
	\node [vertex] (\i\j) at (\i,\j)[]{};
\node [vertex] (x) at (0.4,2)[]{};
\node [vertex] (y) at (1.6,2)[]{};

\foreach \i in {x,y}
	\foreach \j in {03,13,23,01,11,21}
		\draw [edge] (\i) to (\j);

\foreach \i/\j in {x/y,03/13,13/23,01/11,11/21}
	\draw [edge] (\i) to (\j);

\foreach \i/\j in {03/23,21/01}
    \draw [-, shorten <=1pt, shorten >=1pt, >=stealth, line width=.7pt, bend
    left=40] (\i) to (\j);

\node [rectangle] (n) at (1,-0.95){$F_{24}$};
\end{scope}

\end{scope}

\end{tikzpicture}

\caption{$P_4$-sparse minimal $2$-polar obstructions on 9 vertices.}
\label{fig:2obsmin9v}
\end{figure}
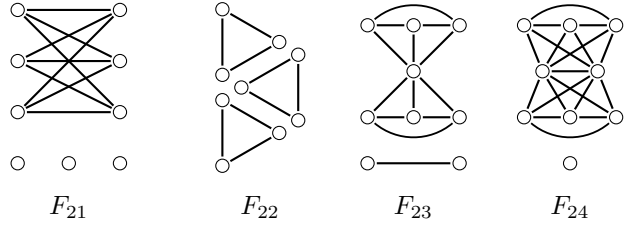

\begin{proof}
	Almost all the arguments used in the proof of Lemma 8 in \cite{hellDAM261}
	are still valid for $P_4$-sparse graphs. We only have to care about the case
	when $H$ is a $P_4$-sparse minimal $2$-polar obstruction on 9 vertices with
	three connected components and precisely two isolated vertices. In such a
	case the nontrivial connected component of $H$, $B_3$, is either a spider or
	the join of two smaller $P_4$-sparse graphs $T_1$ and $T_2$. In the former
	case, since the head of $B_3$ has at most three vertices, $B_3$ is a split
	graph, so $H$ is too. The latter case follows as in the original proof.
\end{proof}

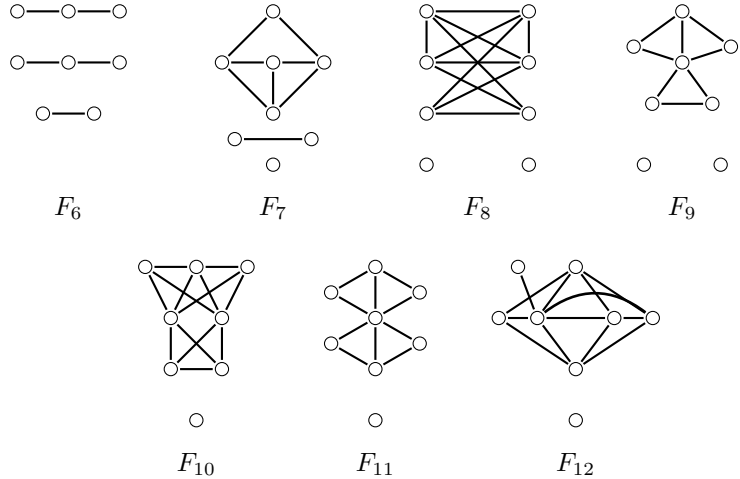
\begin{figure}[ht!]
\centering
\begin{tikzpicture}

\begin{scope}[scale=0.8]

\begin{scope}[scale=0.85, xshift=0cm, yshift=-0cm]
\foreach \i in {0,1,2}
	\foreach \j in {2,3}
		\node [vertex] (\i\j) at (\i,\j)[]{};
\foreach \i in {0,1}
	\node [vertex] (\i) at (\i+0.5,1)[]{};

\foreach \i/\j in {0/1,03/13,13/23,02/12,12/22}
	\draw [edge]  (\i) to (\j);

\node [rectangle] (n) at (1,-0.85){$F_{6}$};
\end{scope}

\begin{scope}[scale=0.85, xshift=5cm, yshift=0cm]
\foreach \j in {0,...,3}
	\node [vertex] (\j) at ($ (0,2) + ({(\j*90)}:1) $)[]{};
\node [vertex] (c) at (0,2)[]{};
\node [vertex] (a) at (-0.75,0.5)[]{};
\node [vertex] (b) at (0.75,0.5)[]{};
\node [vertex] (x) at (0,0)[]{};

\foreach \i in {0,2,3}
	\draw [edge]  (\i) to (c);
\foreach \i/\j in {0/1,1/2,2/3,3/0,a/b}
	\draw [edge]  (\i) to (\j);

\node [rectangle] (n) at (0,-0.85){$F_{7}$};
\end{scope}

\begin{scope}[scale=0.85, xshift=8cm, yshift=0cm]
\foreach \i in {0,2}
	\foreach \j in {0,...,3}
		\node [vertex] (\i\j) at (\i,\j)[]{};

\foreach \i in {23,22,21}
	\foreach \j in {03,02,01}
		\draw [edge]  (\i) to (\j);

\foreach \i/\j in {02/03,22/23}
	\draw [edge]  (\i) to (\j);

\node [rectangle] (n) at (1,-0.85){$F_{8}$};
\end{scope}

\begin{scope}[scale=0.85, xshift=13cm, yshift=0cm]
\foreach \j in {0,...,4}
	\node [vertex] (\j) at ($ (0,2) + ({90+(\j*72)}:1) $)[]{};
\node [vertex] (c) at (0,2)[]{};
\node [vertex] (a) at (-0.75,0)[]{};
\node [vertex] (b) at (0.75,0)[]{};

\foreach \i in {0,...,4}
	\draw [edge]  (\i) to (c);
\foreach \i/\j in {4/0,0/1,2/3}
	\draw [edge]  (\i) to (\j);

\node [rectangle] (n) at (0,-0.85){$F_{9}$};
\end{scope}

\begin{scope}[scale=0.85, xshift=2.5cm, yshift=-5cm]
\foreach \i in {0,1,2}
	\node [vertex] (\i3) at (\i,3)[]{};
\foreach \i in {0,1}
	\foreach \j in {1,2}
		\node [vertex] (\i\j) at (\i+0.5,\j)[]{};
\node [vertex] (x) at (1,0)[]{};

\foreach \i in {02,12}
	\foreach \j in {03,13,23,01,11}
		\draw [edge]  (\i) to (\j);

\foreach \i/\j in {03/13,13/23,01/11}
	\draw [edge]  (\i) to (\j);

\node [rectangle] (n) at (1,-0.85){$F_{10}$};
\end{scope}

\begin{scope}[scale=0.85, xshift=7cm, yshift=-5cm]
\foreach \j in {0,...,5}
	\node [vertex] (\j) at ($ (0,2) + ({90+(\j*60)}:1) $)[]{};
\node [vertex] (c) at (0,2)[]{};
\node [vertex] (x) at (0,0)[]{};

\foreach \i in {0,...,5}
	\draw [edge]  (\i) to (c);
\foreach \i/\j in {5/0,0/1,2/3,3/4}
	\draw [edge]  (\i) to (\j);

\node [rectangle] (n) at (0,-0.85){$F_{11}$};
\end{scope}

\begin{scope}[xscale=0.64, yscale=0.85, xshift=14.5cm, yshift=-5cm]
\foreach \j in {0,...,3}
	\node [vertex] (\j) at ($ (0,2) + ({90+(\j*90)}:1) $)[]{};
\node [vertex] (a) at (-2,2) []{};
\node [vertex] (b) at (2,2) []{};
\node [vertex] (c) at (-1.5,3)[]{};
\node [vertex] (x) at (0,0)[]{};

\foreach \i in {0,2}
	\foreach \j in {a,1,3,b}
		\draw [edge]  (\i) to (\j);
\foreach \i/\j in {1/c,a/1,1/3,3/b}
	\draw [edge]  (\i) to (\j);
\draw [curveEdge]  (1) to (b);

\node [rectangle] (n) at (0,-0.85){$F_{12}$};
\end{scope}

\end{scope}

\end{tikzpicture}

\caption{Family $A$ of $P_4$-sparse minimal $2$-polar obstructions on 8
vertices.}
\label{fig:2obsmin8va}
\end{figure}

\begin{lemma}
    The disconnected $P_4$-sparse minimal $2$-polar obstructions on 8 vertices
    are the graphs $F_{6}, \dots, F_{20}$ and $F_{25}$, depicted in
    \Cref{fig:2obsmin8va,fig:2obsmin8vb}.
\end{lemma}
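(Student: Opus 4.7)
The plan is to mirror the strategy of Lemma~8 in \cite{hellDAM261} for cographs, together with the adjustments already used in the proofs of the preceding two lemmas to accommodate the spider alternative in \Cref{thm:connCharSparse}. Let $H$ be a disconnected $P_4$-sparse minimal $2$-polar obstruction on eight vertices. By \Cref{lem:hellDAM261lem1}(1) and Hannebauer's bound on the order of such obstructions, $H$ has two, three, or four connected components, each of which is a legitimate case to consider.

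I would first dispatch the four-component case using \Cref{cor:Kobsmink+2}: taking $\ell = 2$ there gives exactly $2K_1 + K_2 + K_{2,2}$, one of the graphs depicted in \Cref{fig:2obsmin8va,fig:2obsmin8vb}. Next, since the family $\{F_6, \dots, F_{20}, F_{25}\}$ together with complements is closed under partial complementation by construction, \Cref{lem:hellDAM261partialComp} allows me to treat the remaining two- and three-component cases one partial-complementation orbit at a time. Choosing representatives in which $H$ has a trivial component, \Cref{lem:hellDAM261lem1}(4)--(5) constrain $H$ to the form $aK_1 + bK_2 + H'$, where $H'$ is a connected noncomplete $P_4$-sparse graph of the appropriate order. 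Minimality of $H$ forces $H'$ to contain a small forbidden structure---generically one of the minimal $(2, 1)$-polar obstructions $E_1, \ldots, E_9$ supplied by \Cref{rem:psE}---and the candidates for $H'$ can then be enumerated by inspection along the lines of the cograph proof.

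The only genuinely new ingredient compared to the cograph argument is the possibility that $H'$ is a spider rather than a join, by \Cref{thm:connCharSparse}, and this is the step I expect to be the main obstacle. It is resolved exactly as in the previous two lemmas via \Cref{rem:spiderSplit}: with $|V(H')| \le 7$ and a spider partition $(S, K, R)$ satisfying $|S| = |K| \ge 2$, the head $R$ has at most three vertices and so induces a split graph, which forces $H'$ itself to be a split graph---hence incompatible with containing any of the $E_i$'s, contradiction. The remaining work is bookkeeping: verifying that the surviving candidates constitute exactly $\{F_6, \dots, F_{20}, F_{25}\}$ up to isomorphism and partial complementation, and that each is genuinely minimal by the standard vertex-deletion check applied graph by graph. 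This matches the advertised list.
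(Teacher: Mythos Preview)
Your proposal is correct and follows essentially the same approach as the paper. The paper's own proof is even more terse: it simply states that the argument of Lemma~9 in \cite{hellDAM261} carries over verbatim to $P_4$-sparse graphs, with the spider alternative handled exactly as you describe (a head on at most three vertices is split, so the spider is split, contradiction), and notes that $F_{25}$ must be added as a partial complement of $F_{19}$ that was accidentally omitted in \cite{hellDAM261}.
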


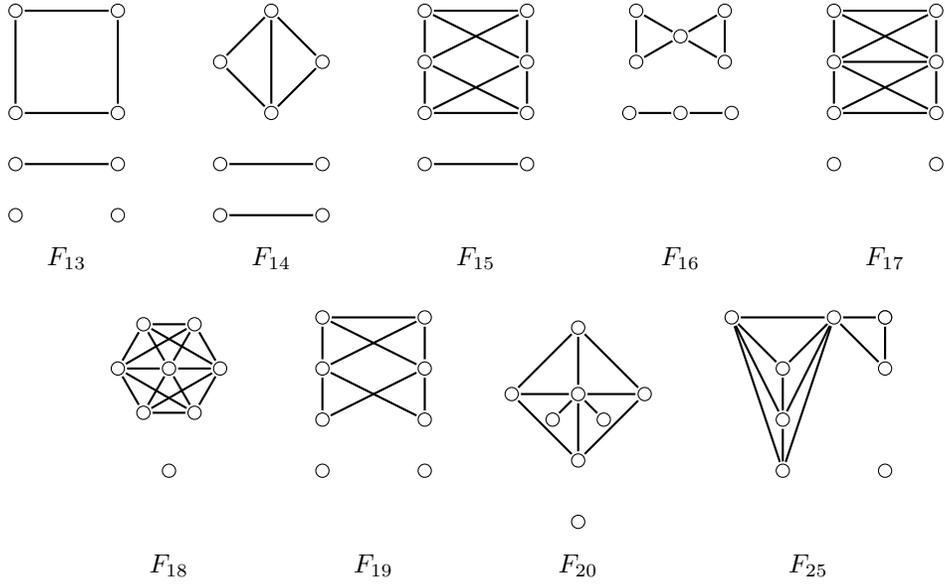
\begin{figure}[ht!]
\centering
\begin{tikzpicture}

\begin{scope}[scale=0.8]

\begin{scope}[scale=0.85, xshift=0cm, yshift=0cm]
\foreach \i in {0,2}
	\foreach \j in {0,1,2,4}
		\node [vertex] (\i\j) at (\i,\j)[]{};

\foreach \i/\j in {04/24,24/22,22/02,02/04,01/21}
	\draw [edge]  (\i) to (\j);

\node [rectangle] (n) at (1,-0.85){$F_{13}$};
\end{scope}

\begin{scope}[scale=0.85, xshift=4cm, yshift=0cm]
\foreach \i in {0,2}
	\foreach \j in {0,1,3}
		\node [vertex] (\i\j) at (\i,\j)[]{};
\foreach \j in {2,4}
	\node [vertex] (1\j) at (1,\j)[]{};

\foreach \i/\j in {03/14,14/23,23/12,12/03,12/14,01/21,00/20}
	\draw [edge]  (\i) to (\j);

\node [rectangle] (n) at (1,-0.85){$F_{14}$};
\end{scope}

\begin{scope}[scale=0.85, xshift=8cm, yshift=0cm]
\foreach \i in {0,2}
	\foreach \j in {1,...,4}
		\node [vertex] (\i\j) at (\i,\j)[]{};

\foreach \i in {04,24,22,02}
	\foreach \j in {03,23}
	\draw [edge]  (\i) to (\j);

\foreach \i/\j in {04/24,02/22,01/21}
	\draw [edge]  (\i) to (\j);

\node [rectangle] (n) at (1,-0.85){$F_{15}$};
\end{scope}

\begin{scope}[scale=0.85, xshift=13cm, yshift=0cm]
\foreach \j in {0,1,3,4}
	\node [vertex] (\j) at ($ (0,3.5) + ({(\j*60)-30}:1) $)[]{};
\node [vertex] (x) at (0,3.5)[]{};

\foreach \i in {5,6,7}
	\node [vertex] (\i0) at (\i-6,2)[]{};

\foreach \i/\j in {3/4,1/0,x/3,x/4,x/0,x/1,50/60,60/70}
	\draw [edge]  (\i) to (\j);

\node [rectangle] (n) at (0,-0.85){$F_{16}$};
\end{scope}

\begin{scope}[scale=0.85, xshift=16cm, yshift=0cm]
\foreach \i in {0,2}
	\foreach \j in {1,...,4}
		\node [vertex] (\i\j) at (\i,\j)[]{};

\foreach \i in {04,24,22,02}
	\foreach \j in {03,23}
	\draw [edge]  (\i) to (\j);

\foreach \i/\j in {04/24,02/22,03/23}
	\draw [edge]  (\i) to (\j);

\node [rectangle] (n) at (1,-0.85){$F_{17}$};
\end{scope}

\begin{scope}[scale=0.85, xshift=3cm, yshift=-6cm]
\foreach \j in {0,...,5}
	\node [vertex] (\j) at ($ (0,3) + ({(\j*60)}:1) $)[]{};
\node [vertex] (x) at (0,3)[]{};
\node [vertex] (y) at (0,1)[]{};

\foreach \i in {2,1,5,4}
	\draw [edge]  (\i) to (3);
\foreach \i in {0,...,5}
	\draw [edge]  (\i) to (x);
\foreach \i in {1,2,4,5}
	\draw [edge]  (\i) to (0);
\foreach \i/\j in {2/1,4/5}
	\draw [edge]  (\i) to (\j);

\node [rectangle] (n) at (0,-0.85){$F_{18}$};
\end{scope}

\begin{scope}[scale=0.85, xshift=6cm, yshift=-6cm]
\foreach \i in {0,2}
	\foreach \j in {1,...,4}
		\node [vertex] (\i\j) at (\i,\j)[]{};

\foreach \i in {04,24,22,02}
	\foreach \j in {03,23}
	\draw [edge]  (\i) to (\j);

\foreach \i/\j in {04/24}
	\draw [edge]  (\i) to (\j);

\node [rectangle] (n) at (1,-0.85){$F_{19}$};
\end{scope}

\begin{scope}[scale=0.85, xshift=11cm, yshift=-6cm]
\foreach \j in {0,...,3}
	\node [vertex] (\j) at ($ (0,2.5) + ({(\j*90)}:1.3) $)[]{};
\node [vertex] (a) at (-0.5,2)[]{};
\node [vertex] (b) at (0.5,2)[]{};
\node [vertex] (c) at (0,2.5)[]{};
\node [vertex] (x) at (0,0)[]{};

\foreach \i in {0,1,2,3,a,b}
	\draw [edge]  (\i) to (c);
\foreach \i/\j in {0/1,1/2,2/3,3/0}
	\draw [edge]  (\i) to (\j);

\node [rectangle] (n) at (0,-0.85){$F_{20}$};
\end{scope}

\begin{scope}[scale=0.85, xshift=14cm, yshift=-5cm]
\foreach \i in {0,2,3}
	\node [vertex] (\i3) at (\i,3)[]{};
\foreach \j in {0,1,2}
	\node [vertex] (1\j) at (1,\j)[]{};
\foreach \j in {0,2,3}
	\node [vertex] (3\j) at (3,\j)[]{};

\foreach \i in {03,23}
	\foreach \j in {12,11,10}
		\draw [edge]  (\i) to (\j);
\foreach \i/\j in {03/23,23/33,23/32,33/32,12/11,11/10}
	\draw [edge]  (\i) to (\j);

\node [rectangle] (n) at (1.5,-1.85){$F_{25}$};
\end{scope}

\end{scope}

\end{tikzpicture}

\caption{Family $B$ of $P_4$-sparse minimal $2$-polar obstructions on 8
vertices.}
\label{fig:2obsmin8vb}
\end{figure}

\begin{proof}
	The proof of Lemma 9 in \cite{hellDAM261} is still valid for $P_4$-sparse
	graphs with the only addition of the graph $F_{25}$ as a partial complement
	of the graph $F_{19}$, which was omitted by mistake in \cite{hellDAM261}.
	The main arguments are similar to those used in the proof of
	\Cref{lem:p4sp2obsminOrder9}.
\end{proof}

We summarize the results of this section in the following theorem.

\begin{theorem}
    There are exactly 50 $P_4$-sparse minimal $2$-polar obstructions, and each
    of them is a cograph. The disconnected $P_4$-sparse minimal $2$-polar
    obstructions are the graphs $F_1, \dots, F_{25}$ depicted in
    \Crefrange{fig:2obsmin7v}{fig:2obsmin8vb}.
\end{theorem}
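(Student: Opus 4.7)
The plan is to aggregate the three preceding lemmas enumerating the disconnected $P_4$-sparse minimal $2$-polar obstructions on $7$, $8$, and $9$ vertices, and then use closure under complementation to produce the connected ones. First, I would invoke Hannebauer's bound, stating that any $P_4$-sparse minimal $(s,k)$-polar obstruction has at most $(s+1)(k+1)$ vertices, to conclude that every $P_4$-sparse minimal $2$-polar obstruction has at most $9$ vertices; combined with \Cref{lem:hellDAM261seven}, this restricts the possible orders to $7$, $8$, or $9$.

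Next, I would invoke the corollary following \Cref{pro:p4sp2polSIIsplit}, which asserts that for any $P_4$-sparse minimal $2$-polar obstruction $H$, either $H$ or $\overline{H}$ is disconnected. Since the class of $P_4$-sparse graphs is self-complementary and the property of being $2$-polar is preserved under complementation, the complement operation yields a bijection between the connected and the disconnected minimal obstructions (using the elementary fact that a graph on at least two vertices and its complement cannot both be disconnected). Thus it suffices to list all disconnected obstructions and double the count.

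The three preceding lemmas provide exactly these disconnected obstructions: $F_1, \dots, F_5$ on $7$ vertices, $F_6, \dots, F_{20}$ together with $F_{25}$ on $8$ vertices, and $F_{21}, \dots, F_{24}$ on $9$ vertices, which amount to $5 + 16 + 4 = 25$ graphs. Taking complements yields $25$ additional obstructions, all connected and therefore distinct from the $F_i$'s, giving $50$ in total. Finally, inspection of the figures confirms that each $F_i$ is $P_4$-free, and since cographs are closed under complementation, every minimal obstruction in the list is a cograph. The only delicate point worth verifying is that the $25$ listed graphs are pairwise non-isomorphic (which is routine from their differing orders, numbers of components, and degree sequences), ensuring that neither the disconnected list nor its complementary counterpart contains duplicates.
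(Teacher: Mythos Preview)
Your proposal is correct and follows the same approach the paper implicitly takes: the theorem is presented in the paper as a summary of the preceding results, and your aggregation of the three lemmas (on $7$, $8$, and $9$ vertices) together with the corollary ruling out self-complementarily connected obstructions and the Hannebauer bound is exactly the intended argument. Your explicit justification for the count of $50$ via the complementation bijection, and the remark on pairwise non-isomorphism, simply make precise what the paper leaves tacit.
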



\subsection{\texorpdfstring{$P_4$}{P4}-extendible minimal 2-polar obstructions}

In \cite{contrerasCogGen1} it was observed that the set of cograph minimal
$(s,k)$-polar obstructions is a proper subset of the set of $P_4$-extendible
minimal $(s,k)$-polar obstructions for the cases $\min \{s,k\} = 1$ and $s = k =
\infty$. In the present section we give the complete family of $P_4$-extendible
minimal $2$-polar obstructions, and show that also in the case $s = k = 2$ there
are $P_4$-extendible minimal $(s,k)$-polar obstructions which are not cographs.
Indeed, each graph depicted in \Crefrange{fig:2obsP4ext}{fig:2obsP4ext4} is a
$P_4$-extendible minimal $2$-polar obstruction which is not a cograph.

We start by proving that there exists only one $P_4$-extendible connected
minimal $2$-polar obstruction whose complement is also a connected graph.

\begin{lemma}
\label{lem:coPspider2obsmin}
    If $G = (S, K, R)$ is a $\overline{P}$-spider and $H = G[R]$, then $G$ is a
    minimal $2$-polar obstruction if and only if $H \cong P_3$, that is, if $G$
    is isomorphic to the graph $F_{26}$ in \Cref{fig:2obsP4ext}.
\end{lemma}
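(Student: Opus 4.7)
The plan is to prove the two implications separately. For the backward direction (if $H\cong P_3$ then $G$ is a minimal $2$-polar obstruction), let $G=F_{26}$, write $K=\{a,b\}$ for the body and $S=\{x,y,z\}$ for the endpoints, with $y$ the unique degree-one vertex (attached only to $b$) and $xz$ the unique edge in $G[S]$, and let $R=\{r_1,r_2,r_3\}$ induce the path $r_1r_2r_3$. To see that $G$ is not $2$-polar I would assume a $2$-polar partition $(A,B)$ exists and argue by cases on the placement of $a$ and $b$. The key structural fact is that $K$ is completely adjacent to $R$: if $a,b\in A$ the edge $ab$ forces $R\subseteq B$, whence $G[B]\supseteq G[R]=P_3$ contradicts $G[B]$ being a $2$-cluster; the remaining placements (both in $B$, or split between $A$ and $B$) reduce to short checks in which the forbidden induced subgraphs for $G[A]$ (namely $K_3$ and $\overline{P_3}$) and for $G[B]$ (namely $P_3$ and $3K_1$) always appear. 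Then, for each $v\in V(G)$, I would display an explicit $2$-polar partition of $G-v$, using the symmetries $x\leftrightarrow z$ and $r_1\leftrightarrow r_3$ so that only a few representative deletions have to be handled.

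For the forward direction, suppose $G$ is a minimal $2$-polar obstruction. If $H$ properly contains a $P_3$, pick $R'\subsetneq R$ of size three inducing $P_3$; the induced subgraph $G[V(\overline{P})\cup R']$ is then a proper $\overline{P}$-spider subgraph of $G$ which, by the backward direction just established, is itself a $2$-polar obstruction, contradicting minimality. Hence $H\cong P_3$ or $H$ is $P_3$-free, so $H$ is a cluster $K_{n_1}+\dots+K_{n_m}$. If $m=1$ then $H\cong K_n$ and $(\{a,b,y\},\{x,z\}\cup R)$ is a $2$-polar partition of $G$; if every $n_i=1$ then $H\cong mK_1$ and $(\{a\}\cup R,\{b,x,y,z\})$ is a $2$-polar partition; either outcome contradicts $G$ being a $2$-polar obstruction.

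The remaining possibility is $m\ge 2$ with some $n_i\ge 2$, in which case $H$ contains an induced $K_1+K_2$. Pick $R'\subseteq R$ of size three inducing $K_1+K_2$ and let $G'$ be the corresponding $\overline{P}$-spider; a direct case analysis parallel to the one used for $H=P_3$ shows $G'$ is not $2$-polar. If $|R|>3$ this yields a proper non-$2$-polar induced subgraph of $G$, contradicting minimality. If $|R|=3$, so $H\cong K_1+K_2$ and $G=G'$, I would instead verify that $G-b$ admits no $2$-polar partition by another short case analysis, which also contradicts minimality.

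The main obstacle will be carrying out the partition case analyses that establish non-$2$-polarity for the three graphs that arise: $G$ with $H\cong P_3$, the spider with $H\cong K_1+K_2$, and the auxiliary $G-b$ in the boundary case. In each of them the complete join of $K$ with $R$ together with the forbidden subgraphs for the two parts of the partition force the candidate partitions into finitely many easily enumerated types, all of which must be ruled out.
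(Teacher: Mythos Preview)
Your argument is correct.  The backward direction and the reduction to ``$H$ is $P_3$-free'' match the paper exactly (the paper just cites the prior blanket verification of the figures for the backward part, while you spell it out).  Your explicit $2$-polar partitions for $H$ complete and $H$ edgeless are right and are what the paper means by ``easy to check''.

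The genuine divergence is in how you eliminate the case where $H$ contains an induced $\overline{P_3}=K_1+K_2$.  The paper disposes of this in one line by observing that the $\overline{P}$-spider then properly contains the already-listed minimal $2$-polar obstruction $F_3=K_1+(K_1\oplus(2K_2+K_1))$: with your labels, the set $\{y\}\cup\{a\}\cup\{x,z\}\cup R'$ (using the triangle $\{x,z,a\}$, the $K_2$ and the isolated vertex in $R'$, together with the isolated $y$ once $b$ is dropped) induces exactly $F_3$, and since $|V(G)|\ge 8>7$ this is proper.  You instead verify directly that the eight-vertex sub-spider $G'$ is not $2$-polar, and then in the boundary case $|R|=3$ you check that $G-b$ is not $2$-polar.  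Both of these verifications succeed; in fact $G-b$ \emph{is} $F_3$, and $G'$ contains it, so what you are doing is reproving non-$2$-polarity of $F_3$ by hand.  The paper's route is shorter because it leverages the catalogue of obstructions already established, while yours is self-contained at the cost of two extra case analyses and the $|R|=3$ versus $|R|>3$ split.  If you want to streamline, note that exhibiting the $7$-vertex $F_3$ directly makes the split unnecessary, since $|V(G)|\ge 8$ whenever $H$ contains $\overline{P_3}$.
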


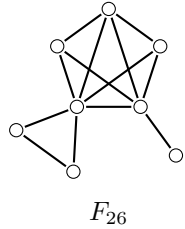
\begin{figure}[ht!]
\centering
\begin{tikzpicture}

\begin{scope}[scale=0.8]
\foreach \i in {0,...,4}
	\node [vertex] (\i) at ($ (0,1) + ({90+(\i*72)}:0.9) $)[]{};
\foreach \i in {7,9,12}
	\node [vertex] (\i) at ($ (0,1) + ({90+(\i*18)}:1.9) $)[]{};

\foreach \i in {2,3}
	\foreach \j in {0,1,4}
		\draw [edge]  (\i) to (\j);

\foreach \i/\j in {2/7,2/9,7/9,3/12,2/3,4/0,0/1}
	\draw [edge]  (\i) to (\j);
\node [rectangle] (n) at (0,-1.5){$F_{26}$};
\end{scope}

\end{tikzpicture}

\caption{A connected $P_4$-extendible minimal
$2$-polar obstruction with connected complement.}
\label{fig:2obsP4ext}
\end{figure}

\begin{proof}
	If $H \cong P_3$, then $G \cong F_{26}$, so $G$ is a minimal $2$-polar
	obstruction. Suppose for a contradiction that $G$ is another $\overline
	P$-spider minimal $2$-polar obstruction. Being $P_3$-free, $H$ is a cluster.
	Moreover, if $H$ is not a complete multipartite graph, then $G$ properly
	contains $F_3$ as an induced subgraph, which is impossible. Then $H$ is a
	cluster which is a complete multipartite graph, so it is either a complete
	or an empty graph. However, it is easy to check that in both cases $G$ is a
	$2$-polar graph, contradicting our original assumption.
\end{proof}

The proofs of the next proposition and its corollaries are very similar to the
proofs of \Cref{pro:p4sp2polSIIsplit} and its corollaries, so we only sketch
them without going into details.

\begin{proposition}
\label{pro:Char2polP4ext}
    Let $X \in \{P_4, F\}$.  If $G$ is an $X$-spider, then $G$ is a $2$-polar
    graph if and only if $R$ induces a split graph.
\end{proposition}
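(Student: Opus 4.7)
The plan is to adapt the proof of \Cref{pro:p4sp2polSIIsplit} to the $X$-spider setting. For the \emph{if} direction, I would fix a split partition $(Q,I)$ of $G[R]$ with $Q$ a clique and $I$ an independent set, and then set $A = S \cup I$ and $B = K \cup Q$. Since $S$ is independent in $X$, $I$ is independent, and $R$ is completely nonadjacent to $S$, the set $A$ induces an edgeless graph, which is trivially complete bipartite. Since $K$ and $Q$ are both cliques and $R$ is completely adjacent to $K$, the set $B$ induces a clique, which is a $2$-cluster (with a single component). Hence $(A,B)$ is a $2$-polar partition of $G$.

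For the \emph{only if} direction, suppose $G$ has a $2$-polar partition $(A,B)$, and refine it into $(V_1,V_2,V_3,V_4)$, where $V_1, V_2$ are the two parts of the complete bipartite graph $G[A]$ and $V_3, V_4$ are the (at most two) clique components of the $2$-cluster $G[B]$. Writing $R_i = R \cap V_i$, the same adjacency accounting used in \Cref{pro:p4sp2polSIIsplit} applies: because $K \cong K_2$ is completely adjacent to $R$, $R_j \neq \emptyset$ forces $K \cap V_j = \emptyset$ for $j \in \{1,2\}$, and forces $K \cap V_{7-j} = \emptyset$ for $j \in \{3,4\}$. Since $K$ is nonempty, at least one $R_i$ must be empty, and by the symmetries $V_1 \leftrightarrow V_2$ and $V_3 \leftrightarrow V_4$ I may assume $R_2 = \emptyset$.

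If additionally $R_3$ or $R_4$ is empty, then $R = R_1 \cup R_j$ decomposes as an independent set together with a clique, and $R$ is split. Otherwise $R_3$ and $R_4$ are both nonempty, and the constraints above force $K \subseteq V_1 \cup V_2 = A$. I would then argue that $S \subseteq A$ as well: a leg lying in $V_3$ or $V_4$ would share a clique component of $G[B]$ with $R_3$ or $R_4$, contradicting $S \not\sim R$. Consequently $G[A]$ contains the induced subgraph $G[S \cup K] \cong X$; since both $P_4$ and the chair $F$ contain an induced $P_4$, and thus an induced $\overline{P_3}$, while complete bipartite graphs are $\overline{P_3}$-free, this is a contradiction.

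The parallel situation in which $R_1$ and $R_2$ are both nonempty, and therefore some $R_j$ with $j\in\{3,4\}$ must be empty, is handled by the same idea. The main obstacle is the placement-of-$S$ analysis, which uses the specific $S$-$K$ adjacency structure of $X$---for $P_4$ each leg has exactly one neighbor in $K$, while for $F$ one $K$-vertex has two $S$-neighbors and the other has one. In both cases the key invariant is that $G[S \cup K] \cong X$ fails to be complete bipartite, which closes the argument uniformly and delivers the split decomposition of $G[R]$.
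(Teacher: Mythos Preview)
Your approach is essentially the paper's, but you have made the \emph{only if} direction more roundabout and introduced a slip at the end. The paper argues directly that $R_1$ and $R_2$ cannot both be nonempty (otherwise $S\cup K\subseteq V_3\cup V_4$, impossible since $X$ contains an induced $P_3$ and hence is not a cluster) and, symmetrically, that $R_3$ and $R_4$ cannot both be nonempty (otherwise $S\cup K\subseteq V_1\cup V_2$, impossible since $X$ contains an induced $\overline{P_3}$ and hence is not complete multipartite). These two facts immediately give a split partition of $G[R]$. Your version instead first establishes that \emph{some} $R_i$ is empty and then enters a case analysis, which is correct but unnecessary.

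Your last paragraph contains an actual error. In the ``parallel situation'' where $R_1$ and $R_2$ are both nonempty, your own adjacency accounting forces $K\subseteq V_3\cup V_4$, and the argument for $S$ (using that $V_1$ and $V_2$ are completely adjacent while $S$ is completely nonadjacent to $R$) gives $S\subseteq V_3\cup V_4$ as well. The contradiction then comes from $G[S\cup K]\cong X$ sitting inside the $2$-cluster $G[B]$, i.e., from $X$ failing to be $P_3$-free---not from $X$ failing to be complete bipartite as you state. The two cases are closed by dual invariants, not by the same one, so the phrase ``which closes the argument uniformly'' is misleading and should be corrected.
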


\begin{proof}
	Let $(S, K, R)$ be the spider partition of $G$. First, assume that $(A,B)$
	is a split partition of $G[R]$. Then, $(A \cup S, B \cup K)$ is a split
	partition of $G$, so $G$ is a split graph, and hence a $2$-polar graph. Now,
	suppose that $G$ has a $2$-polar partition $(V_1, V_2, V_3, V_4)$, and let
	$R_i = V_i \cap R$ for each $i\in\{1,\dots,4\}$. Notice that, if $R_1$ and
	$R_2$ are both nonempty, then $S\cup K \subseteq V_3\cup V_4$, which is
	impossible since $X$ is not a cluster. Analogously, since $X$ is not a
	complete multipartite graph, $R_3$ and $R_4$ cannot be both nonempty.
	Therefore $G[R]$ is a split graph.
\end{proof}

\begin{corollary}
    Let $X \in \{P_4, F\}$.   If $G$ is an $X$-spider, then it is not a minimal
    $2$-polar obstruction.
\end{corollary}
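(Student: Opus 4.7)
The plan is to mirror the argument given for the corollary of \Cref{pro:p4sp2polSIIsplit}. Suppose, for contradiction, that $G = (S, K, R)$ is an $X$-spider with $X \in \{P_4, F\}$ which is a minimal $2$-polar obstruction. By \Cref{pro:Char2polP4ext}, the head $G[R]$ cannot be a split graph, for otherwise $G$ itself would be $2$-polar.

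Next I would exploit minimality to pin down the structure of $G[R]$. For each $r \in R$, the graph $G - r$ is still an $X$-spider, now with head $R \setminus \{r\}$, so by the minimality of $G$ it is $2$-polar, and a second application of \Cref{pro:Char2polP4ext} yields that $G[R] - r$ is a split graph. Hence $G[R]$ is a minimal split obstruction lying inside the class of $P_4$-extendible graphs. Since the minimal split obstructions are exactly $2K_2$, $C_4$, and $C_5$, and each of these is $P_4$-extendible, we get $G[R] \in \{2K_2, C_4, C_5\}$.

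The contradiction is then obtained, as in the $P_4$-sparse setting, by checking for each of the six combinations of $X \in \{P_4, F\}$ and $G[R] \in \{2K_2, C_4, C_5\}$ that some vertex of $V(X)$ can be removed while keeping the graph non-$2$-polar, against the minimality of $G$. The natural candidate is a body vertex $k \in K$: its deletion isolates (or turns into a pendant) the leg matched to $k$ and leaves $G[R]$, a minimal split obstruction, completely joined only to the single remaining body vertex; the structural rigidity of $G[R]$ then blocks every attempt to place $R$ on either side of a prospective $(2,2)$-polar partition of $G - k$. In the few combinations where $G - k$ happens to be $2$-polar, removing an endpoint $s \in S$ instead produces the required non-$2$-polar subgraph.

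The main obstacle is precisely this finite but slightly tedious case analysis: one must verify that in each case no complete-bipartite plus $2$-cluster decomposition survives the deletion. Because the argument runs in exact parallel with the $P_4$-sparse proof and no genuinely new phenomenon arises from the extra possibility $G[R] \cong C_5$, the verification is only sketched, in keeping with the paper's stated convention.
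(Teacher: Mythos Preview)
Your approach is correct and closely mirrors the paper's proof of the analogous $P_4$-sparse corollary, which also concludes by deleting a leg or a body vertex and checking that the result is still not $2$-polar. For the $P_4$-extendible corollary under consideration, however, the paper takes a slightly different route in the final step: once $G[R]\in\{2K_2,C_4,C_5\}$ is established, it observes directly that $G$ contains $F_3$, $\overline{F_3}$, or $F_{27}$, respectively, as a \emph{proper} induced subgraph, and this alone contradicts minimality. This bypasses the vertex-deletion case analysis entirely; in particular it avoids the branching you correctly anticipate (for $X=P_4$ and $G[R]\cong C_4$, deleting a body vertex does yield a $2$-polar graph, so one must switch to a leg). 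The two arguments are essentially equivalent in content---your $G-v$ is either isomorphic to, or properly contains, exactly the obstruction the paper names---but the paper's formulation is a little more uniform across the three cases for $G[R]$ and the two choices of $X$.
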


\begin{proof}
	Let $(S, K, R)$ be the spider partition of $G$. In order to reach a
	contradiction, suppose that $G$ is a minimal $2$-polar obstruction. By
	\Cref{pro:Char2polP4ext}, $G[R]$ is not a split graph, but for any vertex
	$v\in R$, $G[R]-v$ is. Hence, $G[R]$ is a minimal split obstruction, i.e.,
	$G[R]$ is isomorphic to some of $2K_2, C_4$ or $C_5$. But then, $G$ contains
	$F_3, \overline{F_3}$ or $F_{27}$, respectively, as a proper induced
  subgraph, contradicting the minimality of $G$.
\end{proof}

\begin{corollary}
    If $G$ is a $P_4$-extendible minimal $2$-polar obstruction different from
    $F_{26}$ and its complement, then $G$ or its complement is disconnected.
\end{corollary}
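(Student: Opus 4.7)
The plan is to prove the contrapositive: assume that $G$ is a $P_4$-extendible minimal $2$-polar obstruction such that both $G$ and $\overline{G}$ are connected, and deduce that $G$ is isomorphic to $F_{26}$ or to $\overline{F_{26}}$. Under these hypotheses, \Cref{thm:connChar} applied to $G$ leaves exactly two possibilities: either $G$ is itself an extension graph, or $G$ is an $X$-spider with nonempty head for some uniquely determined separable extension graph $X$. The extension-graph possibility is ruled out immediately, since each of the eight graphs in \Cref{fig:extSets} has at most six vertices while \Cref{lem:hellDAM261seven} forces $|V(G)| \ge 7$.

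It therefore suffices to analyze the five separable possibilities $X \in \{P_4, P, \overline{P}, F, \overline{F}\}$ (noting that $\overline{P_4} \cong P_4$). The cases $X = P_4$ and $X = F$ are excluded directly by the corollary immediately preceding the statement. To handle the remaining cases, I would invoke the complementation symmetry of $2$-polarity, together with the routine observation that the complement of an $X$-spider with partition $(S,K,R)$ is an $\overline{X}$-spider with partition $(K,S,R)$: passing to the complement of a separable extension graph swaps its midpoint and endpoint sets, just as it does for $\overline{P_4} \cong P_4$. In particular, the case $X = \overline{F}$ reduces via $\overline{G}$ to the already-settled case $X = F$.

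There remain the cases $X = \overline{P}$ and $X = P$. The first is precisely \Cref{lem:coPspider2obsmin} and gives $G \cong F_{26}$. The second reduces via $\overline{G}$ (a $\overline{P}$-spider minimal $2$-polar obstruction) back to the first and yields $G \cong \overline{F_{26}}$. I do not expect any serious obstacle; the only piece of bookkeeping is verifying the complementation identity between $X$-spiders and $\overline{X}$-spiders, which is a straightforward consequence of the self-complementarity of $P_4$ and the definition of separable extension graph.
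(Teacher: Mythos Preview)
Your proposal is correct and follows essentially the same route as the paper: apply \Cref{thm:connChar} to reduce to the extension-graph and $X$-spider cases, eliminate the former, and then handle the five separable $X$ via \Cref{lem:coPspider2obsmin}, the preceding corollary, and complementation. The only minor difference is that the paper dispatches the extension-graph case by observing that every extension graph is $2$-polar, whereas you use the order bound from \Cref{lem:hellDAM261seven}; both work (note that extension graphs actually have at most five vertices, so your ``at most six'' is a harmless overestimate).
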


\begin{proof}
	It is a simple exercise to verify that any extension graph is a $2$-polar
	graph. In addition, by \Cref{lem:coPspider2obsmin,pro:Char2polP4ext}, the
	only $X$-spiders that are minimal $2$-polar obstructions are $F_{26}$ and
	its complement. Therefore, by \Cref{thm:connChar}, any other
	$P_4$-extendible minimal $2$-polar obstruction is disconnected or has a
	disconnected complement.
\end{proof}

As we did in the case of $P_4$-sparse graphs, now we characterize the
$P_4$-extendible minimal $2$-polar obstructions with the maximum possible number
of connected components. We start by quoting two useful results of
$P_4$-extendible minimal $(s,1)$-polar obstructions.

\begin{theorem}\cite{contrerasCogGen1}
\label{theo:charExt(s1)obsmin}
    Let $s$ be an integer, $s \ge 2$.   If $G$ is a $P_4$-extendible graph, then
    $G$ is a minimal $(s,1)$-polar obstruction if and only if $G$ satisfies
    exactly one of the following assertions:
    \begin{enumerate}
        \item $G$ is isomorphic to one of the seven graphs depicted in
        \Cref{fig:essentials}.

        \item $G$ is isomorphic to some of $2K_{s+1}, K_2 + (K_s \oplus 2K_1)$
            or $K_1 + (K_{s-1}\oplus C_4)$.

        \item For some nonnegative integers $s_1, s_2, \dots, s_t$ such that $s
            = t-1+ \sum_{i=1}^t s_i$, the complement of $G$ is a disconnected
            graph with components $G_1, \dots, G_t$, where each $G_i$ is a
            minimal $(1, s_i)$-polar obstruction whose complement is different
            from the graphs in \Cref{fig:essentials}.
    \end{enumerate}
\end{theorem}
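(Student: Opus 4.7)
The plan is to prove both directions of the equivalence, with the backward direction (any $P_4$-extendible minimal $(s,1)$-polar obstruction falls in one of the three cases) driven by the connectedness characterization of $P_4$-extendible graphs (Theorem~\ref{thm:connChar}). Sufficiency is the routine half: for the seven essential graphs in case~1, non-$(s,1)$-polarity is inherited from non-$(\infty,1)$-polarity and each vertex-deletion can be checked by hand; for case~2, the three graphs $2K_{s+1}$, $K_2+(K_s\oplus 2K_1)$, and $K_1+(K_{s-1}\oplus C_4)$ admit standard component-counting arguments showing the clique part $B$ of any candidate $(s,1)$-polar partition cannot absorb a whole ``bad'' component, while any single vertex deletion frees enough room for the partition to exist; for case~3, the identity $s=t-1+\sum s_i$ is precisely the threshold which, upon deleting a vertex of some $G_i$, collapses the sum by one and allows an $(s,1)$-polar partition of $\overline{G}$ built componentwise.

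For necessity, given a $P_4$-extendible minimal $(s,1)$-polar obstruction $G$, I would apply Theorem~\ref{thm:connChar} to $G$ itself and split into four sub-cases. If $G$ is an extension graph, inspection of the eight graphs in Figure~\ref{fig:extSets} shows that each is either $(s,1)$-polar for $s\ge 2$ or properly contains an obstruction, so nothing new appears. If $G$ is disconnected, an analogue of Lemma~\ref{lem:hellDAM261lem1} bounds the number and structure of components very tightly, and a direct enumeration places $G$ into family~1 or family~2. If $\overline{G}$ is disconnected with components $G_1,\dots,G_t$, then each $\overline{G_i}$ is a proper induced subgraph of $G$, hence $(s,1)$-polar; letting $s_i$ be the minimum integer for which $G_i$ is $(1,s_i)$-polar, the minimality of $G$ forces each $G_i$ to be itself a minimal $(1,s_i-\epsilon)$-obstruction for the appropriate shift, and the identity $s=t-1+\sum s_i$ is extracted from the arithmetic of combining an $(s,1)$-polar partition over independent components. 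The dichotomy between case~2 and case~3 then turns on whether some $\overline{G_i}$ appears in Figure~\ref{fig:essentials}.

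The delicate sub-case, and the expected main obstacle, is when $G$ is an $X$-spider with nonempty head $R$ for some separable extension graph $X\in\{P_4,P,\overline{P},F,\overline{F}\}$: this is the genuinely new possibility that $P_4$-extendibility introduces beyond the cograph and $P_4$-sparse settings. Here the plan is to argue, uniformly, that in any $(s,1)$-polar partition of $G$ the body $K$ of the spider must sit entirely in the clique part $B$ and the legs $S$ in the complete $s$-partite part $A$, so that $(s,1)$-polarity of $G$ reduces to a lower-parameter polarity condition on $R$; then minimality of $G$ would force $G[R]$ to be a minimal obstruction strictly inside $G$, yielding a contradiction. Handling all five choices of $X$ requires tracking how the midpoints/endpoints of $X$ interact with the partition of $R$, and this case analysis, together with correctly identifying which polarity parameter survives in $R$ after the body/legs are absorbed, is where the bulk of the technical effort will go.
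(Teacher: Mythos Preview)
This theorem is quoted from \cite{contrerasCogGen1} and is not proved in the present paper; it appears here only as a cited result used later (e.g.\ in Corollary~\ref{cor:extE} and Proposition~\ref{pro:techLemExt}). Consequently there is no ``paper's own proof'' to compare your proposal against.

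For what it is worth, your outline is a reasonable strategy for how such a result would be established: splitting on Theorem~\ref{thm:connChar} and reducing the $X$-spider case to a lower-parameter polarity condition on the head is exactly the kind of argument one expects, and it parallels what the present paper does for $2$-polarity in Proposition~\ref{pro:Char2polP4ext} and its corollaries. But any assessment of whether your execution of that plan is correct, or whether it matches the original proof, would have to be made against \cite{contrerasCogGen1} rather than against this paper.
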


\begin{corollary}
\label{cor:extE}
    There are exactly 13 $P_4$-extendible minimal $(2,1)$-polar obstructions;
    they are the graphs $E_1, \dots, E_{13}$ depicted in
    \Cref{fig:essentials,fig:P4ext(21)obsmin}.
\end{corollary}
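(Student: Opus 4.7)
The plan is to apply \Cref{theo:charExt(s1)obsmin} with $s = 2$ and enumerate the resulting list, checking that the graphs obtained are precisely $E_1, \dots, E_{13}$ as depicted in \Cref{fig:essentials} and \Cref{fig:P4ext(21)obsmin}.

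First, assertion (1) of the theorem directly contributes the seven graphs shown in \Cref{fig:essentials}, which are labeled exactly $E_1, E_2, E_3, E_7, E_{10}, E_{11}, E_{12}$. Next, assertion (2) with $s = 2$ yields three further graphs: $2K_3 \cong E_9$, $K_2 + (K_2 \oplus 2K_1) \cong E_8$, and $K_1 + (K_1 \oplus C_4) \cong E_6$ (observing that $K_1 \oplus C_4$ is precisely the wheel $W_4$ appearing in the definition of $E_6$).

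The substantive step is assertion (3). Since $\overline{G}$ must be disconnected, I would enumerate the integer solutions with $t \ge 2$ and $s_i \ge 0$ to $t - 1 + \sum_{i=1}^t s_i = 2$; up to reordering these are $(t, s_1, s_2) = (2, 0, 1)$ and $(t, s_1, s_2, s_3) = (3, 0, 0, 0)$. I then use that the unique minimal $(1,0)$-polar obstruction is $K_2$ and the minimal $(1,1)$-polar obstructions are the three minimal split obstructions $2K_2, C_4, C_5$ (from \cite{foldesGTC1977}); it is immediate to check that the complements $2K_1, C_4, 2K_2, C_5$ of these small graphs do not appear among the graphs of \Cref{fig:essentials}, so the side condition in assertion (3) is automatic. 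The $t = 2$ subcase therefore yields $\overline{K_2 + 2K_2} = \overline{3K_2} = E_4$, $\overline{K_2 + C_4} = E_5$, and $\overline{K_2 + C_5} = E_{13}$, while the $t = 3$ subcase produces only $\overline{3K_2} = E_4$ again.

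Adding up, assertions (1), (2), and (3) contribute $7 + 3 + 3 = 13$ pairwise nonisomorphic graphs, coinciding with $E_1, \dots, E_{13}$. The only real obstacle is bookkeeping: making sure that no parameter tuple in assertion (3) is overlooked, that each listed graph is correctly identified with an $E_i$ from the figures, and that the side condition about complements not lying in \Cref{fig:essentials} is rechecked in every subcase.
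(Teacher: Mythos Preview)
Your proposal is correct and follows exactly the approach implicit in the paper: the corollary is stated without proof immediately after \Cref{theo:charExt(s1)obsmin}, so the intended argument is precisely the case analysis you carry out for $s=2$. Your enumeration of the three assertions, the identification of each resulting graph with the corresponding $E_i$, and the handling of the parameter tuples in assertion~(3) (including the harmless duplication of $E_4$) are all accurate.
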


As the reader can check, the proofs of the next proposition and its corollary
are analogous to those of \Cref{pro:techLem,cor:Kobsmink+2}.

\begin{proposition}
\label{pro:techLemExt}
    Let $\ell$ be a positive integer. If $G$ is a connected $P_4$-extendible
    minimal $(1,\ell-1)$-polar obstruction which is a complete multipartite
    graph, then $G$ is isomorphic to either $K_{\ell,\ell}$ or $K_1 \oplus C_4$.
\end{proposition}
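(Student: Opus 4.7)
The plan is to follow the blueprint of \Cref{pro:techLem} almost verbatim, using \Cref{theo:charExt(s1)obsmin} in place of \Cref{thm:allDisc(s1)obsmin}. The cases $\ell=1,2$ can be dispatched by direct inspection: when $\ell=1$, the hypothesis forces $G\cong K_2=K_{1,1}$; when $\ell=2$, the minimal $(1,1)$-polar (split) obstructions are $\{2K_2, C_4, C_5\}$ by F\"oldes and Hammer, and only $C_4=K_{2,2}$ is connected and complete multipartite.

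For the main case $\ell\ge 3$, I would pass to the complement $H:=\overline{G}$. Because $G$ is a connected complete multipartite graph on at least two vertices (hence with at least two parts), $H$ is a disjoint union of at least two cliques; in particular, $H$ is a disconnected cluster. Since the class of $P_4$-extendible graphs is closed under complementation and $(s,k)$-polarity is dual to $(k,s)$-polarity under complementation, $H$ is a $P_4$-extendible minimal $(\ell-1,1)$-polar obstruction. I can then apply \Cref{theo:charExt(s1)obsmin} with $s=\ell-1\ge 2$.

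Running through the three cases of that theorem: Case~3 requires $\overline{H}=G$ to be disconnected, contradicting the hypothesis. In Cases~1 and~2, $H$ must additionally be a cluster. A quick inspection of the seven essentials in \Cref{fig:essentials} shows that only $E_1=K_1+2K_2$ has all components complete, while the other six contain an induced $P_3$, $C_4$, or $C_5$ inside a single component. Similarly, among the three graphs listed in Case~2, only $2K_\ell$ is a cluster, since $K_2+(K_{\ell-1}\oplus 2K_1)$ and $K_1+(K_{\ell-2}\oplus C_4)$ carry an induced $P_3$ via the join $K_{\ell-1}\oplus 2K_1$ and the cycle $C_4$, respectively. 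The two surviving candidates yield $G\cong\overline{E_1}=K_1\oplus C_4$ and $G\cong K_{\ell,\ell}$.

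The main obstacle is really just the cluster-versus-non-cluster check on the seven essentials and on the three graphs appearing in Case~2 of \Cref{theo:charExt(s1)obsmin}; this is entirely mechanical because every non-surviving candidate exhibits an obvious induced $P_3$, $C_4$, or $C_5$ within a single connected component. Once the complementation reduction is set up, the classification mirrors the $P_4$-sparse argument exactly.
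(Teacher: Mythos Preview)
Your proof is correct and follows essentially the same approach as the paper's. The paper's proof is extremely terse---after handling $\ell=1,2$ directly, it simply asserts that \Cref{theo:charExt(s1)obsmin} yields the conclusion for $\ell\ge3$---and your argument is precisely the unpacking of that assertion: pass to the complement, apply the characterization to the resulting minimal $(\ell-1,1)$-polar obstruction, rule out Case~3 by connectivity of $G$, and filter Cases~1 and~2 by the cluster condition.
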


\begin{proof}
    Clearly, if $\ell=1$, then $G \cong K_2$, while if $\ell =2$, we have $G
    \cong C_4$. By \Cref{theo:charExt(s1)obsmin}, if $\ell \ge 3$, $G$ is
    isomorphic to either $K_{\ell,\ell}$ or $K_1 \oplus C_4$.
\end{proof}

\begin{corollary}
\label{cor:Kobsmink+2P4Ext}
If $G$ is a $P_4$-extendible graph, then $G$ is a minimal $2$-polar obstruction
with exactly 4 connected components if and only if $G \cong \ell K_1 + (3 -
\ell) K_2 + K_{\ell, \ell}$ for some integer $\ell \in \{1, 2, 3\}$.
\end{corollary}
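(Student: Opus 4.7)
The plan is to mirror the proof of \Cref{cor:Kobsmink+2} almost verbatim, substituting \Cref{pro:techLemExt} for \Cref{pro:techLem}, since \Cref{lem:charExtrKobsmin} is a purely structural statement that holds for arbitrary graphs and does not care whether we work inside the cograph, $P_4$-sparse, or $P_4$-extendible universe.

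First I would apply \Cref{lem:charExtrKobsmin} with $k=2$. This says that $G$ is a minimal $2$-polar obstruction with exactly four connected components if and only if $G \cong \ell K_1 + (3-\ell)K_2 + G'$ for some $\ell \in \{1,2,3\}$, where $G'$ is a connected complete bipartite graph which is a minimal $(1,\ell-1)$-polar obstruction, with the extra requirement that $G'$ is $(1,\ell)$-polar whenever $\ell \le 2$. Note that $G'$ inherits $P_4$-extendibility from $G$, since $P_4$-extendibility is hereditary.

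Next I would invoke \Cref{pro:techLemExt} to pin down $G'$: any connected $P_4$-extendible minimal $(1,\ell-1)$-polar obstruction that is a complete multipartite graph is isomorphic to either $K_{\ell,\ell}$ or $K_1\oplus C_4$. The graph $K_1\oplus C_4$ contains a triangle (the apex together with any edge of the $C_4$), hence is not bipartite and is ruled out. Therefore $G' \cong K_{\ell,\ell}$.

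It remains to verify the auxiliary $(1,\ell)$-polarity condition when $\ell \in \{1,2\}$, which is immediate: taking one side of the bipartition of $K_{\ell,\ell}$ as the independent part $A$ and decomposing the other side into $\ell$ singleton cliques yields a $(1,\ell)$-polar partition. Combining these observations gives the stated characterization. The only genuinely nontrivial ingredient is \Cref{pro:techLemExt}; once it is invoked, the rest is a direct bookkeeping check, and no real obstacle arises.
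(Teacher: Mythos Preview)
Your proof is correct and follows essentially the same approach as the paper, which explicitly states that the argument is identical to that of \Cref{cor:Kobsmink+2} with \Cref{pro:techLemExt} substituted for \Cref{pro:techLem}. Your version is in fact slightly more explicit in noting that $G'$ inherits $P_4$-extendibility and in spelling out why $K_1\oplus C_4$ is excluded (it is not bipartite), but the logical structure is the same.
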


\begin{proof}
	This result represents to $P_4$-extendible graphs the same as
	\Cref{cor:Kobsmink+2} is to $P_4$-sparse graphs. In fact, the proof of this
	result is basically the same as that of \Cref{cor:Kobsmink+2}, but using
	instead \Cref{pro:techLemExt}, which is to $P_4$-extendible graphs as
	\Cref{pro:techLem} is to $P_4$-sparse graphs.
\end{proof}

By \Cref{lem:hellDAM261seven}, we have that no $P_4$-extendible minimal
$2$-polar obstruction has less than seven vertices. In the rest of the section
we give the complete list of such obstructions, obtaining as a consequence that
they have at most 9 vertices, as in the case of $P_4$-sparse graphs. We remark
that these proofs are very similar in flavor to the analogous proofs for
$P_4$-sparse graphs.

\begin{lemma}
The disconnected $P_4$-extendible minimal $2$-polar obstructions on 7 vertices
are exactly the graphs $F_1, \dots, F_5$ depicted in \Cref{fig:2obsmin7v}.
\end{lemma}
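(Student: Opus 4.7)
The plan is to follow closely the argument of the analogous statement for $P_4$-sparse graphs proven four lemmas above, substituting the $P_4$-extendible counterparts of the ingredients and supplying one extra case analysis to compensate for the loss of $\{\overline{P}, P_5\}$-freeness. Let $H$ be a disconnected $P_4$-extendible minimal $2$-polar obstruction on seven vertices.

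If $H$, or some graph obtained from $H$ by a sequence of partial complementations, has four connected components, then \Cref{cor:Kobsmink+2P4Ext} forces it to be isomorphic to $\ell K_1 + (3-\ell)K_2 + K_{\ell,\ell}$ for some $\ell \in \{1, 2\}$ (the value $\ell = 3$ would produce a graph on nine vertices), and by \Cref{lem:hellDAM261partialComp} together with a direct inspection of the figure, the partial-complement closure of these two graphs is precisely $\{F_1, \ldots, F_5\}$, as required.

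Otherwise no partial complement of $H$ has four components, so $H$ has at most three. Replicating the reduction of Lemma 7 of \cite{hellDAM261} (which relies only on closure under partial complementation and therefore applies verbatim to $P_4$-extendible graphs), we may assume $H \cong K_1 + H_1$ with $H_1$ connected on six vertices. Because $H$ is not $2$-polar, $H_1$ contains an induced minimal $(2,1)$-polar obstruction $H'$, and by \Cref{cor:extE} we have $H' \in \{E_1, \ldots, E_{13}\}$. The graph $E_{13}$ has seven vertices, and among $E_2, \ldots, E_{12}$ the only connected ones are $E_4$ and $E_5$, so a disconnected six-vertex $E_i$ cannot be induced in the connected $6$-vertex graph $H_1$ (it would have to equal it). Thus $H' \in \{E_1,\, E_4,\, E_5\}$, exactly the set that appears in the $P_4$-sparse proof.

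The case $H' \cong E_4 \cong \overline{3K_2}$ forces $H_1 = E_4$ and hence $H \cong K_1 + \overline{3K_2} \cong F_5$. The case $H' \cong E_5 \cong 2K_1 \oplus 2K_2$ makes $H$ itself $(1,2)$-polar (group the two vertices of the $2K_1$ together with the isolated vertex of $H$ as the independent part, leaving the $2K_2$ as the cluster), contradicting that $H$ is a $2$-polar obstruction. The hard part is the case $H' \cong K_1 + 2K_2$, since the $P_4$-sparse argument invoked the $\{\overline{P}, P_5\}$-freeness of $H_1$, a property unavailable here. To replace that step, I would enumerate the adjacencies of the unique extra vertex $v \in V(H_1) \setminus V(H')$: connectedness of $H_1$ forces $v$ adjacent to the isolated vertex of $H'$ and to at least one endpoint in each of the two $K_2$'s. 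A short case analysis then shows that unless $v$ is adjacent to every vertex of $H'$, the graph $H_1$ contains an induced $P_4$ whose set $S(W)$ has at least two elements, contradicting $P_4$-extendibility. The surviving configuration gives $H_1 \cong K_1 \oplus (K_1 + 2K_2)$, whence $H \cong K_1 + (K_1 \oplus (K_1 + 2K_2)) \cong F_3$, completing the classification.
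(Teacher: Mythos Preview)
Your argument is correct and follows the paper's strategy closely. The only differences are bookkeeping: the paper explicitly disposes of the possibility $H \cong K_1 + (\text{six-vertex $X$-spider})$ during the reduction by observing that such an $H$ is $(1,2)$-polar (in your version this case is excluded implicitly, since those $X$-spiders contain none of $E_1,E_4,E_5$ and hence are $(2,1)$-polar), and conversely in the $E_1$ case the paper merely asserts that the six-vertex component must be $K_1\oplus(K_1+2K_2)$ ``because $H$ is a $P_4$-extendible graph'' without spelling out the adjacency analysis you supply.
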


\begin{proof}
	Let $H$ be a disconnected $P_4$-extendible minimal $2$-polar obstruction on
	7 vertices. By the observation after \Cref{lem:hellDAM261partialComp}, it is
	enough to prove that $H \cong F_i$ for some $i \in \{1, \dots, 5\}$.  It
	follows from \Cref{cor:Kobsmink+2P4Ext} that, if $H$ has four components, or
	it can be transformed into a graph with four components through a sequence
	of partial complementations, then it is one of $F_1,\dots,F_5$.

	So, assume that none of the graphs that can be obtained from $H$ by means of
	partial complements has more than three connected components. Notice that
	any $P_4$-extendible graph $H$ on seven vertices with exactly two
	components, can be transformed by partial complementation into a graph with
	at least three components, one of which is an isolated vertex, except in the
	case that $H$ is the disjoint union of $K_1$ with an $X$-spider on 6
	vertices, in which case it can be checked that $H$ is a $(1,2)$-polar graph.
	Taking a partial complementation separating one isolated vertex of $H$ from
	the rest of the graph, we obtain a graph with two components, one of them
	being an isolated vertex. Let us suppose without loss of generality that $H$
	has this form.

	Since $H$ is not $2$-polar, its nontrivial component must contain a
	$P_4$-extendible minimal $(2,1)$-polar obstruction $H'$ as an induced
	subgraph. Moreover, either $H'$ has fewer than six vertices, or it has
	exactly six vertices and is connected, so it follows from \Cref{cor:extE}
	that $H' \in \{K_1 + 2K_2, \overline{3K_2}, \overline{K_2 + C_4}\}$. If $H'
	\cong \overline{3K_2}$, then $H$ is the graph $F_5$ in \Cref{fig:2obsmin7v}.
	If $H' \cong \overline{K_2 + C_4}$, it is straightforward to verify that $H$
	is a $(1,2)$-polar graph. Otherwise, $H' \cong K_1 + 2K_2$. But $H'$ is
	contained in a connected component of $H$ on six vertices, which must be
	isomorphic to $K_1 \oplus (K_1 + 2K_2)$ because $H$ is a $P_4$-extendible
	graph. Then, $H$ is isomorphic to $F_3$.
\end{proof}

The next technical lemma will be needed to give the complete list of
$P_4$-extendible minimal $2$-polar obstructions with at least eight vertices.

\begin{lemma}
\label{lem:oneNoCog}
    Let $H$ be a disconnected minimal $2$-polar obstruction. If $H$ has a
    component $H'$ which is not a cograph, then $H-H'$ is a split graph. In
    consequence, at most one component of $H$ is not a cograph.
\end{lemma}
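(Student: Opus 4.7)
The plan is to derive both parts of the lemma by exhibiting a proper induced subgraph of $H$ that fails to be $2$-polar, thereby contradicting the minimality of $H$.

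For the first part, I argue by contradiction. Suppose $H - H'$ is not a split graph. Since split graphs are precisely the $\{2K_2, C_4, C_5\}$-free graphs \cite{foldesGTC1977}, $H - H'$ contains some $X \in \{2K_2, C_4, C_5\}$ as an induced subgraph. On the other hand, $H'$ is not a cograph, so it contains an induced $P_4$ on vertices $p_1 p_2 p_3 p_4$, and since $H'$ and $H - H'$ are disjoint unions of components of $H$, it follows that $P_4 + X$ is induced in $H$. Deleting a carefully chosen vertex of the $P_4$ in each case then produces a smaller graph that is still induced in $H$ but fails to be $2$-polar: removing the midpoint $p_2$ when $X = 2K_2$ yields $K_1 + 3K_2$ on $7$ vertices; removing an endpoint $p_1$ when $X = C_4$ yields $P_3 + C_4$ on $7$ vertices; and removing $p_1$ when $X = C_5$ yields $P_3 + C_5$ on $8$ vertices. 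Since $|H| \ge 8$ in the first two cases and $|H| \ge 9$ in the third, the resulting subgraph is a proper induced subgraph of $H$, and verifying that none of these three graphs admits a $2$-polar partition contradicts the minimality of $H$.

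For the consequence, suppose $H$ admits two distinct non-cograph components $H_1$ and $H_2$. Applying the first part to each of them, both $H - H_1$ and $H - H_2$ are split graphs, and hence $H_1$ and $H_2$ are themselves split, since being a split graph is hereditary. Moreover, $H_2$ is a non-trivial component of $H - H_1$ (as it contains a $P_4$, hence an edge), so the $2K_2$-freeness of $H - H_1$ forces every other component of $H - H_1$ to be trivial; thus $H = H_1 + H_2 + tK_1$ for some integer $t \ge 0$. Taking split partitions $(S_i, K_i)$ of $H_i$ with $S_i$ independent and $K_i$ a clique, and letting $T$ denote the set of $t$ isolated vertices of $H$, the pair $(A, B)$ with $A := S_1 \cup S_2 \cup T$ and $B := K_1 \cup K_2$ is a $(1,2)$-polar partition of $H$: $G[A]$ has no edges and $G[B]$ is the disjoint union of at most two cliques. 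But then $H$ is $2$-polar, contradicting the hypothesis that $H$ is a $2$-polar obstruction.

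The main technical obstacle will be the finite case analysis showing that $K_1 + 3K_2$, $P_3 + C_4$, and $P_3 + C_5$ admit no $2$-polar partition. In each case one must enumerate how the vertices can be distributed between the complete bipartite side $A$ and the $2$-cluster side $B$, the key observation being that each graph decomposes into too many clique-like pieces (three $K_2$-components, or a $C_4$ or $C_5$ that occupies the only large complete bipartite substructure) to be accommodated in a side that holds at most two cliques, while the few available complete bipartite induced subgraphs are too small for $A$ to absorb the excess.
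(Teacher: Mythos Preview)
Your proof is correct and follows essentially the same route as the paper. The three graphs you single out—$K_1+3K_2$, $P_3+C_4$, and $P_3+C_5$—are precisely the paper's $F_1$, $F_2$, and $F_{29}$, and the paper simply cites them as already-catalogued minimal $2$-polar obstructions rather than re-verifying non-$2$-polarity as you plan to; for the consequence, the paper uses the slightly shorter observation that $H = H_1 + (H-H_1)$ is already a disjoint union of two split graphs, so your further decomposition into $H_1 + H_2 + tK_1$ is correct but unnecessary.
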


\begin{proof}
	If $H-H'$ is not a split graph it contains $2K_2,C_4$ or $C_5$ as an induced
	subgraph, and $H$ would contain $F_1,F_2$ or $F_{29}$ as a proper induced
	subgraph, respectively (see \Cref{fig:2obsmin7v,fig:2obsP4ext2}). Now,
	assume for a contradiction that $H$ has at least two components, $H_1$ and
	$H_2$, which are not cographs. By the first part of this lemma, $H-H_1$ and
	$H-H_2$ (and hence $H_1$) are split graphs, so $H$ is the disjoint union of
	two split graphs, which implies that it is a $(1,2)$-polar graph,
	contradicting that $H$ is a $2$-polar obstruction.
\end{proof}

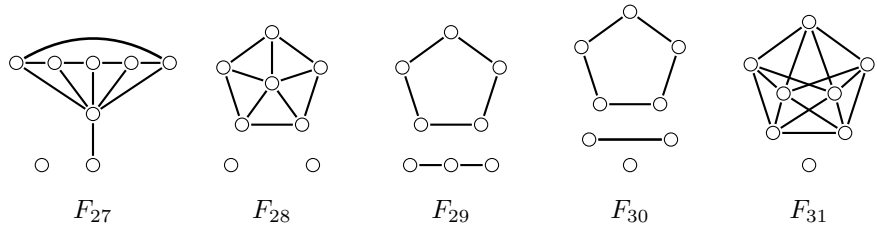
\begin{figure}[ht!]
\centering
\begin{tikzpicture}

\begin{scope}[scale=0.8]

\begin{scope}[scale=0.85]
\foreach \i in {0,...,4}
	\node [vertex] (\i) at ({(\i-2)*0.75},2)[]{};
\node [vertex] (5) at (0,1)[]{};
\node [vertex] (6) at (0,0)[]{};
\node [vertex] (7) at (-1,0)[]{};

\foreach \i in {0,...,3}
	\draw let \n1={int(\i+1)} in [edge] (\i) to node [above] {} (\n1);

\draw [curveEdge]  (0) to (4);

\foreach \i in {0,...,4}
	\draw [edge]  (\i) to (5);

\draw [edge]  (6) to (5);

\node [rectangle] (n) at (0,-0.9){$F_{27}$};
\end{scope}

\begin{scope}[scale=0.85, xshift=3.5cm]
\foreach \i in {0,...,4}
	\node [vertex] (\i) at ($ (0,1.6) + ({90+(\i*72)}:1) $)[]{};
\node [vertex] (c) at (0,1.6)[]{};
\node [vertex] (a) at (-0.8,0)[]{};
\node [vertex] (b) at (0.8,0)[]{};

\foreach \i in {0,...,4}
	\draw let \n1={int(mod(\i+1,5))} in [edge] (\i) to node [above] {} (\n1);

\foreach \i in {0,...,4}
	\draw [edge]  (\i) to (c);

\node [rectangle] (n) at (0,-0.9){$F_{28}$};
\end{scope}

\begin{scope}[scale=0.85, xshift=7cm]
\foreach \i in {0,...,4}
	\node [vertex] (\i) at ($ (0,1.6) + ({90+(\i*72)}:1) $)[]{};
\node [vertex] (x) at (-0.8,0)[]{};
\node [vertex] (y) at (0,0)[]{};
\node [vertex] (z) at (0.8,0)[]{};

\foreach \i in {0,...,4}
	\draw let \n1={int(mod(\i+1,5))} in [edge] (\i) to node [above] {} (\n1);

\foreach \i/\j in {x/y,y/z}
	\draw [edge]  (\i) to (\j);

\node [rectangle] (n) at (0,-0.9){$F_{29}$};
\end{scope}

\begin{scope}[scale=0.85, xshift=10.5cm]
\foreach \i in {0,...,4}
	\node [vertex] (\i) at ($ (0,2) + ({90+(\i*72)}:1) $)[]{};
\node [vertex] (c) at (0,0)[]{};
\node [vertex] (a) at (-0.8,0.5)[]{};
\node [vertex] (b) at (0.8,0.5)[]{};

\foreach \i in {0,...,4}
	\draw let \n1={int(mod(\i+1,5))} in [edge] (\i) to node [above] {} (\n1);

\draw [edge]  (a) to (b);

\node [rectangle] (n) at (0,-0.9){$F_{30}$};
\end{scope}

\begin{scope}[scale=0.85, xshift=14cm]
\foreach \i in {0,...,4}
	\node [vertex] (\i) at ($ (0,1.6) + ({90+(\i*72)}:1.2) $)[]{};
\foreach \i in {5,6}
	\node [vertex] (\i) at ($ (0,1.4) + ({(\i*180)}:0.5) $)[]{};
\node [vertex] (c) at (0,0)[]{};

\foreach \i in {0,...,4}
	\draw let \n1={int(mod(\i+1,5))} in [edge] (\i) to node [above] {} (\n1);

\foreach \i in {0,...,4}
	\foreach \j in {5,6}
		\draw [edge] (\i) to node [above] {} (\j);

\draw [edge]  (a) to (b);

\node [rectangle] (n) at (0,-0.9){$F_{31}$};
\end{scope}

\end{scope}

\end{tikzpicture}

\caption{Family $C$ of $P_4$-extendible minimal $2$-polar obstructions on $8$
  vertices.}
\label{fig:2obsP4ext2}
\end{figure}

In the proof of our following lemma, we will implicitly use
\cref{lem:hellDAM261partialComp}, and the observation right after it, when
analyzing which minimal obstructions appear in the different cases.   Hence, it
is natural that at most one graph from each figure appears in the proof.   For
example, since $F_{13}$ appears as an induced subgraph in one of the cases, then
none of the graphs $F_{14}, \dots, F_{25}$ (see \cref{fig:2obsmin8vb}) will be
explicitly mentioned in the proof.

\begin{lemma}
\label{lem:noBig2obminP4ext}
    The only disconnected $P_4$-extendible minimal $2$-polar obstructions with
    at least 8 vertices are the graphs $F_6, F_7, \dots, F_{25}, F_{27}, F_{28},
    \dots, F_{41}$ depicted in
    \Cref{fig:2obsmin9v,fig:2obsmin8va,fig:2obsmin8vb,%
    fig:2obsP4ext2,fig:2obsP4ext3,fig:2obsP4ext4}.
\end{lemma}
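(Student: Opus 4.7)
The plan is to argue along the same lines as the $P_4$-sparse case, exploiting \Cref{lem:hellDAM261partialComp} to reduce to convenient shapes and then using \Cref{lem:oneNoCog} to isolate the non-cograph phenomena specific to the $P_4$-extendible class. Let $H$ be a disconnected $P_4$-extendible minimal $2$-polar obstruction with $|V_H|\ge 8$. Since taking a partial complement preserves $2$-polar obstructions within the $P_4$-extendible class, and the closure of each listed graph under partial complementation coincides with its figure (together with the complements), it suffices to pick a convenient representative of the orbit of $H$.

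First I would dispose of the $4$-component case: by \Cref{cor:Kobsmink+2P4Ext}, if $H$ (or any of its partial complements) has exactly four components, then $H\cong \ell K_1+(3-\ell)K_2+K_{\ell,\ell}$ for $\ell\in\{1,2,3\}$, giving the $8$-vertex graph (the one with $\ell=2$) and the $9$-vertex graphs $F_{21}$--$F_{24}$ of \Cref{fig:2obsmin9v}. Hence I may assume that every partial complement of $H$ has at most three components. By \Cref{lem:hellDAM261lem1}, $H$ has at most one noncomplete component once a trivial one exists; this lets me take a partial complement separating a trivial component from the rest, so without loss of generality $H$ has at most one isolated vertex and its nontrivial component contains a minimal $(2,1)$-polar obstruction as an induced subgraph.

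Next I would split on whether every component of $H$ is a cograph. If yes, then $H$ is itself a cograph and the enumeration collapses to the cograph case of \cite{hellDAM261}; a straightforward check against the lists there produces exactly the graphs $F_6,\dots,F_{25}$ from \Cref{fig:2obsmin8va,fig:2obsmin8vb,fig:2obsmin9v}, the new $F_{25}$ being the partial complement of $F_{19}$ that was missed in \cite{hellDAM261}. Otherwise, \Cref{lem:oneNoCog} forces $H$ to have a unique non-cograph component $H'$, while $H-H'$ is a split graph; combined with the upper bound on the number of components, this strongly constrains $H-H'$ to be one of $K_1$, $2K_1$, $K_2$, $K_1+K_2$, or $K_3$ on at most a few vertices. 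The $P_4$-extendibility of $H'$ and \Cref{thm:connChar} then describe $H'$ as either an extension graph, an $X$-spider (with nonempty head), or a graph built by disjoint union/join from smaller pieces. Case-checking each possibility against the minimal $(2,1)$-polar obstructions listed in \Cref{cor:extE} (to locate the forbidden substructure responsible for non-$2$-polarity) and against the forbidden induced subgraphs $F_1,\dots,F_{25}$ already accounted for (to exclude proper inclusions) yields precisely the graphs $F_{27},\dots,F_{41}$ of \Cref{fig:2obsP4ext2,fig:2obsP4ext3,fig:2obsP4ext4}.

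The main obstacle will be the non-cograph branch: the $P_4$-extendible class admits seven extension graphs beyond $P_4$ and four families of $X$-spiders, so the enumeration of possibilities for $H'$ is wider than in the cograph setting, and for each candidate one must both verify that the resulting $H$ is not $2$-polar (typically by showing that any attempted $(2,2)$-polar partition forces an obstruction inherited from \Cref{cor:extE}) and that every proper induced subgraph is $2$-polar (by exhibiting explicit $(2,2)$-partitions after vertex deletions). Using the partial-complement trick to identify isomorphic orbits keeps this casework finite and, after pruning, leaves exactly the listed graphs $F_{27},\dots,F_{41}$.
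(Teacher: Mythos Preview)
Your high-level split into ``all components are cographs'' versus ``some component is not a cograph'' is a different organization from the paper's proof, and the first branch is fine: if every component of $H$ is a cograph then $H$ itself is a cograph, and the disconnected cograph minimal $2$-polar obstructions on at least eight vertices are exactly $F_6,\dots,F_{25}$ by the results of \cite{hellDAM261} (together with the omitted $F_{25}$). The paper, by contrast, never separates cograph from non-cograph; it cases on the maximum number of components reachable by partial complementation ($4$, then $3$, then $2$), and in the three-component case normalizes to $H=B_1+B_2+B_3$ with $B_3\cong K_1$, $B_2$ complete, and $B_1$ a connected $(2,1)$-polar graph that is neither split nor complete bipartite, then analyzes $B_1$ structurally (first by $|B_2|\ge 2$ versus $|B_2|=1$, then inside $B_1$ via the $P_4$-extendible trichotomy). \Cref{lem:oneNoCog} is used only at the very end, for the residual two-component case.

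Your non-cograph branch has a real gap. You assert that $H-H'$ must be one of $K_1,\,2K_1,\,K_2,\,K_1+K_2,\,K_3$, but this is neither justified nor correct as stated: $K_3$ is already excluded by \Cref{lem:hellDAM261lem1}(5), and more importantly you have no a priori bound on $|H|$ in the $P_4$-extendible case (Hannebauer's $(s+1)(k+1)$ bound is only known for $P_4$-sparse graphs; for $P_4$-extendible graphs the $9$-vertex bound is a \emph{consequence} of the classification, not an input to it). In particular, when $H$ has no trivial component, nothing you have said rules out $H-H'$ being a large connected split graph. Getting the bound on $H-H'$ and pinning down the structure of $H'$ requires exactly the kind of minimality-driven case analysis the paper carries out on $B_1$ (including the reduction of the join case back to $m\ge 2$). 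So your plan can be made to work, but completing the non-cograph branch essentially forces you to redo the paper's three-component analysis; the saving from invoking \cite{hellDAM261} in the cograph branch is largely offset by this.
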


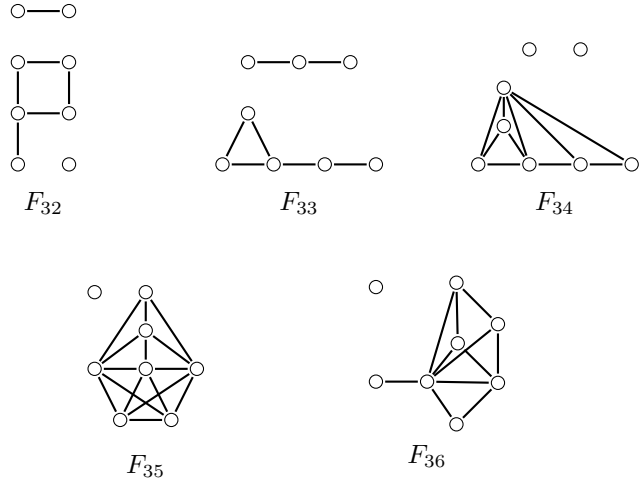
\begin{figure}[ht!]
\centering
\begin{tikzpicture}

\begin{scope}[scale=0.8]

\begin{scope}[scale=0.85, xshift=-0.5cm]
\foreach \i in {0,1}
	\foreach \j in {0,...,3}
		\node [vertex] (\i\j) at (\i,\j)[]{};

\foreach \i/\j in {03/13,02/12,01/11,00/01,01/02,11/12}
	\draw [edge] (\i) to node [above] {} (\j);

\node [rectangle] (n) at (0.5,-0.75){$F_{32}$};
\end{scope}

\begin{scope}[scale=0.85, xshift=4cm]
\foreach \i in {0,...,2}
	\node [vertex] (\i2) at (\i,2)[]{};
\foreach \i in {0,...,3}
	\node [vertex] (\i0) at (\i-0.5,0)[]{};
\node [vertex] (01) at (0,1)[]{};

\foreach \i/\j in {02/12,12/22,00/10,10/20,20/30,01/00,01/10}
	\draw [edge] (\i) to node [above] {} (\j);

\node [rectangle] (n) at (1,-0.75){$F_{33}$};
\end{scope}

\begin{scope}[scale=0.85, xshift=9cm]
\foreach \i in {0,1}
	\node [vertex] (\i3) at (\i+0.5,2.25)[]{};
\foreach \i in {0,...,3}
	\node [vertex] (\i0) at (\i-0.5,0)[]{};
\node [vertex] (01) at (0,0.75)[]{};
\node [vertex] (02) at (0,1.5)[]{};

\foreach \i/\j in {00/10,10/20,20/30,01/00,01/10}
	\draw [edge] (\i) to node [above] {} (\j);
\foreach \i in {00,10,20,30,01}
	\draw [edge] (\i) to node [above] {} (02);

\node [rectangle] (n) at (1,-0.75){$F_{34}$};
\end{scope}

\begin{scope}[scale=0.85, xshift=0.5cm, yshift=-5cm]
\node [vertex] (10) at (1,0)[]{};
\node [vertex] (20) at (2,0)[]{};
\node [vertex] (01) at (0.5,1)[]{};
\node [vertex] (11) at (1.5,1)[]{};
\node [vertex] (21) at (2.5,1)[]{};
\node [vertex] (12) at (1.5,1.75)[]{};
\node [vertex] (13) at (1.5,2.5)[]{};
\node [vertex] (x) at (0.5,2.5)[]{};

\foreach \i in {01,21}
	\foreach \j in {13,12,11,10,20}
		\draw [edge] (\i) to node [above] {} (\j);
\foreach \i/\j in {10/20,10/11,20/11,11/12,12/13}
	\draw [edge] (\i) to node [above] {} (\j);

\node [rectangle] (n) at (1.5,-0.9){$F_{35}$};
\end{scope}

\begin{scope}[scale=0.85, xshift=7.5cm, yshift = -4.7cm]
\foreach \i in {0,1,7,6}
	\node [vertex] (\i) at ($ (0,1) + ({22.5+(\i*45)}:1.5) $)[]{};
\node [vertex] (00) at (0.6,1.2)[]{};
\node [vertex] (u) at (0,0.45)[]{};
\node [vertex] (h) at (-1,0.45)[]{};
\node [vertex] (a) at (-1,2.3)[]{};

\foreach \j in {h,00,1,0,7,6}
	\draw [edge] (u) to node [above] {} (\j);
\foreach \i/\j in {7/00,00/1,1/0,0/7,7/6}
	\draw [edge] (\i) to node [above] {} (\j);

\node [rectangle] (n) at (0,-1){$F_{36}$};
\end{scope}

\end{scope}

\end{tikzpicture}

\caption{Family $D$ of $P_4$-extendible minimal $2$-polar obstructions on $8$
  vertices.}
\label{fig:2obsP4ext3}
\end{figure}

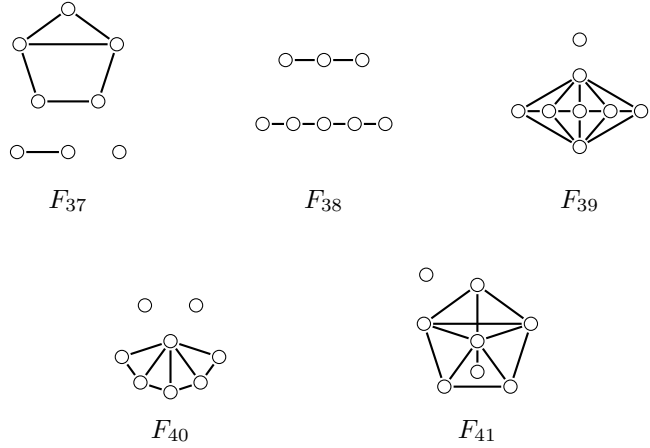
\begin{figure}[ht!]
\centering
\begin{tikzpicture}

\begin{scope}[scale=0.8]

\begin{scope}[scale=0.85]
\foreach \i in {0,...,4}
	\node [vertex] (\i) at ($ (0,2) + ({90+(\i*72)}:1) $)[]{};
\foreach \i in {0,...,2}
	\node [vertex] (\i0) at (\i-1,0.2)[]{};

\foreach \i in {0,...,4}
	\draw let \n1={int(mod(\i+1,5))} in [edge] (\i) to node [above] {} (\n1);
\foreach \i/\j in {00/10,1/4}
	\draw [edge] (\i) to node [above] {} (\j);

\node [rectangle] (n) at (0,-0.75){$F_{37}$};
\end{scope}

\begin{scope}[scale=0.85, xshift=5cm]
\foreach \i in {0,...,2}
	\node [vertex] (\i) at ({(\i*0.75)-0.75},2)[]{};
\foreach \i in {3,...,7}
	\node [vertex] (\i) at ({(\i*0.6)-3},0.75)[]{};

\foreach \i in {0,1,3,4,5,6}
	\draw let \n1={int(\i+1)} in [edge] (\i) to node [above] {} (\n1);

\node [rectangle] (n) at (0,-0.75){$F_{38}$};
\end{scope}

\begin{scope}[scale=0.85, xshift=10cm]
\node [vertex] (i) at (0,2.4)[]{};
\node [vertex] (t) at (0,1.7)[]{};
\node [vertex] (b) at (0,0.3)[]{};
\foreach \i in {0,...,4}
	\node [vertex] (\i) at ({(\i*0.6)-1.2},1)[]{};

\foreach \i in {0,...,3}
	\draw let \n1={int(\i+1)} in [edge] (\i) to node [above] {} (\n1);
\foreach \i in {t,b}
	\foreach \j in {0,...,4}
		\draw [edge] (\i) to node [above] {} (\j);

\node [rectangle] (n) at (0,-0.75){$F_{39}$};
\end{scope}

\begin{scope}[scale=0.85, xshift=2cm, yshift = -4.5cm]
\foreach \i in {5,...,9}
	\node [vertex] (\i) at ($ (0,1) + ({18+(\i*36)}:1) $)[]{};
\node [vertex] (a) at (-0.5,1.7)[]{};
\node [vertex] (b) at (0.5,1.7)[]{};
\node [vertex] (o) at (0,1)[]{};

\foreach \i in {5,...,8}
	\draw let \n1={int(mod(\i+1,10))} in [edge] (\i) to node [above] {} (\n1);
\foreach \i in {5,...,9}
	\draw [edge] (\i) to node [above] {} (o);

\node [rectangle] (n) at (0,-0.75){$F_{40}$};
\end{scope}

\begin{scope}[scale=0.85, xshift=8cm, yshift = -4.5cm]
\foreach \i in {0,...,4}
	\node [vertex] (\i) at ($ (0,1) + ({90+(\i*72)}:1.1) $)[]{};
\node [vertex] (a) at (-1,2.3)[]{};
\node [vertex] (c) at (0,1)[]{};
\node [vertex] (b) at (0,0.4)[]{};

\foreach \i in {0,...,4}
	\draw let \n1={int(mod(\i+1,5))} in [edge] (\i) to node [above] {} (\n1);
\foreach \i/\j in {1/4}
	\draw [edge] (\i) to node [above] {} (\j);
\foreach \i in {0,1,2,3,4,b}
	\draw [edge] (\i) to node [above] {} (c);

\node [rectangle] (n) at (0,-0.75){$F_{41}$};
\end{scope}

\end{scope}

\end{tikzpicture}

\caption{Family $E$ of $P_4$-extendible minimal $2$-polar obstructions on $8$
  vertices.}
\label{fig:2obsP4ext4}
\end{figure}

\begin{proof}
	Let $H$ be a $P_4$-extendible disconnected minimal $2$-polar obstruction
	with at least eight vertices. By the observation after
	\Cref{lem:hellDAM261partialComp}, it is enough to prove that $H \cong F_i$
	for some $i \in \{6, \dots, 41\}$, $i \ne 26$. If $H$ can be transformed by
	means of partial complementations into a graph with four connected
	components, we have by \Cref{cor:Kobsmink+2P4Ext} that $H$ is one of
	$F_{13},\dots,F_{25}$.

	Now, assume that $H$ can be transformed by partial complementations into a
	graph $H'$ with three components, but it cannot be transformed into a graph
	with four connected components. Notice that at least one component of $H'$
	is a cograph, otherwise $3P_4$ is an induced subgraph of $H'$, but $F_1$ is
	a proper induced subgraph of $3P_4$, contradicting that $H$ is a minimal
	$2$-polar obstruction. Having a cograph component, $H'$ can be transformed
	by a finite sequence of partial complementations into a graph $H''$ with
	three connected components where at least one of them, $B_3$, is a trivial
	component. Moreover, since $H''$ is also a minimal $2$-polar obstruction,
	$H''-B_3$ is $2$-polar but it is neither a $(2,1)$- nor a $(1,2)$-polar
	graph. Therefore, a component $B_2$ of $H''-B_3$, is a complete graph while
	its other component, $B_1$, is a $(2,1)$-polar graph that is neither a split
	nor a complete bipartite graph. Without loss of generality we can assume
	that $B_1,B_2$ and $B_3$ are the components of $H$ itself. Denote by $m$ the
	order of $B_2$.

	Suppose first that $m \ge 2$. Since $B_1$ is not a split graph, then it
	contains some of $2K_2, C_5$ or $C_4$ as an induced subgraph. If $2K_2 \le
	B_1$, then $H$ properly contains a copy of $F_1$, while if $C_5 \le B_1$,
	then $H$ must be isomorphic to $F_{30}$. Otherwise, $B_1$ contains a copy
	$C$ of $C_4$. Observe that if $B_1$ contains $K_1+C_4$ as an induced
	subgraph, then $H$ properly contains a copy of $F_{13}$, which is
	impossible. Hence, any vertex in $B_1$ not in $C$ is adjacent to some vertex
	of $C$. Let $u$ be a vertex in $B_1$ not in $C$. If $u$ is adjacent to
	exactly one vertex of $C$, then $H \cong F_{32}$; if $u$ is adjacent to two
	adjacent vertices of $C$, then $H \cong F_{37}$; if $u$ is adjacent to
	exactly three vertices of $C$, $H \cong F_7$; and if $u$ is adjacent to all
	vertices of $C$, then $H$ properly contains a copy of $F_4$. Thus, if $H$ is
	none of the graphs mentioned before, any vertex $u$ in $B_1$ not in $C$ is
	adjacent to two antipodal vertices in $C$. In addition, two vertices
	adjacent to the same pair of antipodal vertices cannot be adjacent to each
	other, otherwise $H$ contains $F_7$ as a proper induced subgraph.
	Furthermore, any two vertices adjacent to distinct pairs of antipodal
	vertices in $C$ must be adjacent to each other, or $H$ would contain
	$F_{32}$ as a proper induced subgraph. It is easy to observe that under such
	restrictions $B_1$ is a complete bipartite graph, which is impossible.

	Now let us consider the case $m=1$. We have that $B_1$ is a connected
	$P_4$-extendible graph with at least six vertices, so $B_1$ is either an
	$X$-spider or the join of two smaller $P_4$-extendible graphs. Suppose first
	that $B_1$ is an $X$-spider and let $R$ be its head. If $R$ contains
	$2K_2,C_4$ or $C_5$ as an induced subgraph, then $H$ properly contains $F_3,
	F_4$ or $F_{28}$, respectively, but this is impossible. Then, $R$ is a split
	graph, which implies that $X \notin \{P_4, F, \overline{F}\}$, or $H$ would
	be a split graph. We can assume that $X = \overline P$. If $R$ contains an
	induced $P_3$, then $H$ properly contains an induced copy of $F_{26}$, so
	$R$ must be a cluster. Hence, $R$ is a split graph which is a cluster, so $R
	= K_a + bK_1$ for some nonnegative integers $a$ and $b$. Observe that $a \ge
	2$ and $b \ge 1$, otherwise $H$ is a $2$-polar graph or it contains $F_9$ as
	a proper induced subgraph. Then, $R$ contains an induced copy of
	$\overline{P_3}$, but this implies that $H$ has a proper induced copy of
	$F_3$. Hence, $B_1$ is not an $X$-spider, so $B_1$ is the join of two
	smaller $P_4$-extendible graphs, $T_1$ and $T_2$, and hence $H = T_1 \oplus
	T_2 + B_2 + B_3$. If the complement of $T_i$ is disconnected for some $i \in
	\{1, 2\}$, then $\overline{B_1} + \overline{B_2 + B_3}$ has four connected
	components, a contradiction. Then each $T_i$ has a connected complement, so
	it is isomorphic to $K_1$ or it contains $P_4$ as an induced subgraph.
	Evidently, at least one of $T_1$ and $T_2$ is a nontrivial graph. First
	assume, without loss of generality, that $T_1$ is an isolated vertex, then
	$\overline{B_1} + \overline{B_2+B_3}$ has three connected components, one of
	them isomorphic to $K_2$, and other isomorphic to $K_1$, so we are in the
	case $m=2$. Otherwise, each of $T_1$ and $T_2$ contain an induced copy of
	$P_4$, so $\overline{B_1} + \overline{B_2+B_3}$ contains $F_1$ as a proper
	induced subgraph, which is impossible.

	Finally, assume that $H$ cannot be transformed by partial complementations
	into a graph with at least three connected components. Thus, $H$ has exactly
	two connected components, and the complement of any of them is also a
	connected graph. Then, by \Cref{lem:oneNoCog}, $H$ is the disjoint union of
	$K_1$ and an $X$-spider, but exactly as in the case $m=1$, it can be proved
	that this is impossible for a $P_4$-extendible minimal $2$-polar
	obstruction.
\end{proof}

We summarize the results of this section in the following theorem.

\begin{theorem}
  There are exactly 82 $P_4$-extendible minimal $2$-polar obstructions, they are
  the graphs $F_1,\dots,F_{41}$ and their complements.
\end{theorem}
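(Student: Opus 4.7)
The proof assembles the classification built up through the section and closes it under complementation. First I would invoke the corollary following Proposition~\ref{pro:Char2polP4ext}, which shows that every $P_4$-extendible minimal $2$-polar obstruction $H$ distinct from $F_{26}$ and $\overline{F_{26}}$ satisfies that either $H$ or $\overline{H}$ is disconnected. This partitions the set of obstructions into three exhaustive cases: those belonging to $\{F_{26}, \overline{F_{26}}\}$, those that are disconnected, and those whose complement is disconnected.

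For the disconnected case, Lemma~\ref{lem:hellDAM261seven} provides the lower bound on the number of vertices, and the two preceding classification lemmas give the complete list: the graphs $F_1, \dots, F_5$ on seven vertices and $F_6, \dots, F_{25}, F_{27}, \dots, F_{41}$ on eight or nine vertices. Together with $F_{26}$ this exhibits exactly $41$ graphs, namely $F_1, \dots, F_{41}$, covering all obstructions modulo complement. Next I would close the catalogue under complementation: the class of $P_4$-extendible graphs is closed under complement by definition, and the property of being a $2$-polar graph is also preserved by complement (interchange the complete multipartite part and the cluster of a $(2,2)$-polar partition). Hence $\overline{F_i}$ is a $P_4$-extendible minimal $2$-polar obstruction for every $i$, and every obstruction whose complement is disconnected arises in this form.

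The remaining task is to verify that the family $\{F_i : 1 \le i \le 41\} \cup \{\overline{F_i} : 1 \le i \le 41\}$ has exactly $82$ distinct members, which reduces to checking that no $F_i$ is self-complementary and that $F_i \not\cong \overline{F_j}$ for every pair $i \ne j$. I anticipate this bookkeeping to be the main obstacle; I would dispatch it by comparing cheap invariants (number of edges, degree sequence, component count and component sizes) across Figures~\ref{fig:2obsmin7v}--\ref{fig:2obsP4ext4}. Since the listed graphs have highly varied structure, such invariants are sufficient to rule out every isomorphism collision and thereby certify the stated count of $82$.
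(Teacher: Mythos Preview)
Your proposal is correct and mirrors the paper's own assembly: the theorem is stated there as a summary of the section, and the logical flow you describe (the corollary after Proposition~\ref{pro:Char2polP4ext} for the connected/co-connected case, the seven-vertex lemma for $F_1,\dots,F_5$, Lemma~\ref{lem:noBig2obminP4ext} for $F_6,\dots,F_{25},F_{27},\dots,F_{41}$, then closure under complement) is exactly what is being summarized.

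One remark on the final bookkeeping, which you flag as the main obstacle: it is in fact almost automatic. Every $F_i$ with $i\ne 26$ is disconnected by construction, so its complement is connected; hence no $F_i$ can coincide with any $\overline{F_j}$ for $i,j\ne 26$, and none of these $F_i$ is self-complementary. The $F_i$ with $i\ne 26$ are pairwise nonisomorphic by inspection of the figures (and hence so are their complements). It only remains to observe that $F_{26}$, being a $\overline{P}$-spider, is not isomorphic to its complement (a $P$-spider) and is connected with connected complement, so it collides with nothing else on either list. No degree-sequence computations are needed.
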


\section{Largest polar subgraphs}
\label{sec:maxSubgraphs}

In this section, we give algorithms to find maximum order induced subgraphs with
some given properties (related to polarity) in $P_4$-sparse and $P_4$-extendible
graphs using their tree representations. Ekim, Mahadev and de Werra
\cite{ekimDAM156} previously obtained similar results for cographs using the
cotree. Given a graph $G$, we denote by $\mathsf{MC}(G), \mathsf{MI}(G)$, and
$\mathsf{MS}(G)$ a maximum subset of $V_G$ inducing a complete graph, an empty
graph, and a split graph, respectively. We use $\mathsf{MB}(G)$ and
$\mathsf{McB}(G)$ to denote a maximum subset of $V_G$ inducing a bipartite and a
co-bipartite graph, respectively. We also use $\mathsf{MUC}(G)$ and
$\mathsf{MJI}(G)$ to denote maximum subsets of $V_G$ inducing a cluster and a
complete multipartite graph, respectively; $\mathsf{MM}(G), \mathsf{McM}(G)$,
and $\mathsf{MP}(G)$ stand for maximum subsets of $V_G$ inducing a monopolar, a
co-monopolar and a polar subgraph of $G$, while $\mathsf{MU}(G)$ and
$\mathsf{McU}(G)$ are used for denoting maximum subsets of $V_G$ inducing a
unipolar or a co-unipolar graph, respectively. To simplify the notation, when we
are working with preset subgraphs $G_i$ of $G$, we write $\mathsf{MC_i}$ instead
of $\mathsf{MC}(G_i)$ and, if there is no possibility of confusion, we write
$\mathsf{MC}$ instead of $\mathsf{MC}(G)$; we use an analogous notation for all
other maximal subgraphs. Given a family $\mathcal{F}$ of subsets of $V_G$, a
\textit{witness of} $M = \max_{F \in \mathcal{F}} \{ \lvert F \rvert \}$
\textit{in $\mathcal{F}$} is an element $F'$ of $\mathcal{F}$ such that $\lvert
F' \rvert = M$.

The following proposition provides recursive characterizations for the
aforementioned maximum subgraphs in a disconnected graph.

\begin{proposition}
\label{prop:maxUnion}
Let $G = G_0 + G_1$ be a graph, and let $W$ be a subset of $V_G$. The following
statements hold true.
\begin{enumerate}
  \item $W$ is a maximum clique of $G$ if and only if $W$ is a witness of
  $\max \{ \lvert \mathsf{MC_0} \rvert, \lvert \mathsf{MC_1} \rvert \}$.

  \item $W$ is a maximum independent set of $G$ if and only if $W$ is a witness
  of $\max \{ \lvert \mathsf{MI_0} \cup \mathsf{MI_1} \rvert \}$.

  \item $W$ induces a maximum bipartite subgraph of $G$ if and only if $W$ is a
  witness of $\max \{ \lvert \mathsf{MB_0} \cup \mathsf{MB_1} \rvert \}$.

  \item $W$ induces a maximum co-bipartite subgraph of $G$ if and only if $W$ is
  a witness of
  \[
    \max \{ \lvert \mathsf{McB_0} \rvert,
    \lvert \mathsf{McB_1} \rvert,
    \lvert \mathsf{MC_0} \cup \mathsf{MC_1} \rvert \}.
  \]

  \item $W$ induces a maximum split subgraph of $G$ if and only if $W$ is a
  witness of $\max_{i\in\{0,1\}} \{ \lvert \mathsf{MI_i} \cup \mathsf{MS_{1-i}}
  \rvert \}$.

  \item $W$ induces a maximum cluster in $G$ if and only if $W$ is a witness of
  $\max \{ \lvert \mathsf{MUC_0} \cup \mathsf{MUC_1} \rvert \}$.

  \item $W$ induces a maximum complete multipartite subgraph of $G$ if and only
  if $W$ is a witness of
  \[
    \max \{ \lvert \mathsf{MI} \rvert,
    \lvert \mathsf{MJI_0} \rvert,
    \lvert \mathsf{MJI_1} \rvert \}.
  \]

  \item $W$ induces a maximum monopolar subgraph of $G$ if and only if $W$ is a
  witness of $\max \{ \lvert \mathsf{MM_0} \cup \mathsf{MM_1} \rvert \}$.

  \item $W$ induces a maximum co-monopolar subgraph of $G$ if and only if $W$ is
  a witness of
  \[
    \max_{i\in\{0,1\}} \{ \lvert \mathsf{MS_i} \cup \mathsf{MI_{1-i}} \rvert,
    \lvert \mathsf{McM_i} \rvert,
    \lvert \mathsf{MC_i} \cup \mathsf{MJI_{1-i}} \rvert \}.
  \]

  \item $W$ induces a maximum polar subgraph of $G$ if and only if $W$ is a
  witness of
  \[
    \max \{ \lvert \mathsf{MM} \rvert,
    \lvert \mathsf{MP_0} \cup \mathsf{MUC_{1}} \rvert,
    \lvert \mathsf{MP_1} \cup \mathsf{MUC_{0}} \rvert \}.
  \]

  \item $W$ induces a maximum unipolar subgraph of $G$ if and only if $W$ is a
    witness of
    \[
      \max_{i\in\{0,1\}} \{ \lvert \mathsf{MU_i} \cup \mathsf{MUC_{1-i}} \rvert,
      \lvert \mathsf{MU_{1-i}} \cup \mathsf{MUC_i} \rvert \}.
    \]

  \item $W$ induces a maximum co-unipolar subgraph of $G$ if and only if $W$ is
    a witness of
    \[
      \max \{ \lvert \mathsf{MB} \rvert,
      \lvert \mathsf{MI_0} \cup \mathsf{McU_1} \rvert,
      \lvert \mathsf{MI_1} \cup \mathsf{McU_0} \rvert \}.
    \]
\end{enumerate}
\end{proposition}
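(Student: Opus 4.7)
The plan is to prove each of the twelve equivalences by analyzing how the vertex set $W$ distributes between the two components $V_{G_0}$ and $V_{G_1}$ and combining the structural constraints of the relevant property with two key observations about $G = G_0 + G_1$: first, because there are no edges between $V_{G_0}$ and $V_{G_1}$, any connected induced subgraph of $G$ is contained in a single component; second, if $G[W']$ is a complete multipartite graph with $W' \cap V_{G_i} \neq \varnothing$ for both $i \in \{0,1\}$, then any $u \in W' \cap V_{G_0}$ and $v \in W' \cap V_{G_1}$ are nonadjacent and therefore lie in the same part of the multipartite decomposition, which forces $W'$ to be an independent set.

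I will dispatch the twelve items in three groups. Items 2, 3, 6, and 8 are immediate: the classes of independent sets, bipartite graphs, clusters, and monopolar graphs are closed under disjoint union, so the union of maximum witnesses in each $G_i$ is a maximum witness in $G$, and conversely a maximum such subgraph of $G$ decomposes into one such in each component. Items 1 and 7 use the first observation (and the second for item 7): a maximum clique lies entirely in one $G_i$, while a maximum complete multipartite subgraph is either a maximum independent set of $G$ (when it spans both components) or a complete multipartite subgraph of a single $G_i$.

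The remaining items (4, 5, 9, 10, 11, 12) will all follow a common template. Given a witness $W$ of the property with polar partition $(A,B)$, I will apply the two observations to the ``structured'' side of the partition --- the clique for unipolar-type properties, the complete multipartite side for polar-type properties --- to force that side either into a single component or into a degenerate form. For item 10, for example, the complete multipartite part $A$ either lies inside some $V_{G_i}$, yielding a witness of $\lvert \mathsf{MP_i} \cup \mathsf{MUC_{1-i}} \rvert$ via the inherited polar partition on $W \cap V_{G_i}$ together with the cluster $W \cap V_{G_{1-i}} \subseteq B$, or $A$ spans both components and is therefore an independent set, so $W$ is monopolar and gives a witness of $\lvert \mathsf{MM} \rvert$. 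Items 4, 5, 9, 11, and 12 will proceed by the same scheme with the appropriate case distinctions; item 9 is the most intricate, since when the clique part $A$ sits in $V_{G_i}$ while the complete multipartite part $B$ spans, one obtains a split subgraph of $G_i$ together with an independent set in $G_{1-i}$, which explains the appearance of $\lvert \mathsf{MS_i} \cup \mathsf{MI_{1-i}} \rvert$. The reverse direction of each equivalence will be a routine verification: any vertex set assembled from compatible parts in the two components inherits the required induced-subgraph structure precisely because no edges cross between $V_{G_0}$ and $V_{G_1}$. The main obstacle, more tedious than difficult, will be the bookkeeping needed to confirm that every distribution of $A$ and $B$ across the components is accounted for and that degenerate cases ($A = \varnothing$, $B = \varnothing$, or single-vertex parts) are subsumed by the listed expressions.
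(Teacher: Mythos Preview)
Your proposal is correct and follows essentially the same approach as the paper: both arguments group the ``closed under disjoint union'' cases (items 2, 3, 6, 8) as immediate, handle items 1 and 7 via the connectedness of cliques and of non-independent complete multipartite graphs, and treat the remaining items by locating the clique or complete multipartite part of the relevant partition inside a single component (or forcing it to be independent when it spans both). The paper singles out item 9 as the representative case just as you do, and your treatment of the degenerate/spanning subcases matches theirs.
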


\begin{proof}
  In this paper, we present several propositions whose proofs are based on
  similar ideas to those used in the current one. Nonetheless, for the sake of
  completeness, we include the entire proof for all such propositions. In this
  particular proof, we consider that property 9 is a good example of the general
  arguments used for proving the entire statement.

  \begin{enumerate}
    \item Let $W$ be a maximum clique of $G$. Evidently, for some $i\in \{0,
        1\}$, $W \cap V_{G_i} = \varnothing$ and $W \cap V_{G_{1-i}}$ is a
        clique of $G_{1-i}$. It follows that $W$ is a maximum clique for either
        $G_0$ or $G_1$ such that $\lvert W \rvert = \max \{ \lvert \mathsf{MC}_0
        \rvert, \lvert \mathsf{MC}_1 \rvert \}$.

    \item Let $W$ be a maximum independent set of $G$. Clearly, $W \cap V_{G_i}$
        is an independent set of $G_i$ for each $i \in \{0, 1\}$. It follows
        that $W$ is the union of a maximum independent set of $G_0$ with a
        maximum independent set of $G_1$.

    \item Let $W$ be a set inducing a maximum bipartite subgraph of $G$. For
        each $i \in \{0,1\}$, $G[W \cap V_{G_i}]$ is a bipartite graph, and the
        disjoint union of two bipartite graphs clearly is a bipartite graph, so
        the result follows.

    \item Let $W$ be a set inducing a maximum co-bipartite subgraph of $G$, and
        let $(A, B)$ be a partition of $W$ into two cliques. Clearly, each of
        $A$ and $B$ is completely contained in one of $V_{G_1}$ or $V_{G_2}$. If
        both $A$ and $B$ are contained in $V_{G_i}$ for some $i \in \{0,1\}$,
        then $W$ induces a maximum co-bipartite subgraph of $G_i$. Otherwise, $A
        \subseteq V_{G_i}$ and $B \subseteq V_{G_{1-i}}$ for some $i \in
        \{0,1\}$, so $G[A]$ is a maximum clique in $G_i$ and $G[B]$ is a maximum
        clique in $G_{1-i}$. The result easily follows from here.

    \item Let $W$ be a set inducing a maximum split subgraph of $G$, and let
        $(A, B)$ be a split partition of $G[W]$. Since $B$ is a clique, $B$ is
        contained in either $V_{G_0}$ or $V_{G_1}$. Hence, for some $i \in \{0,
        1\}$, $W \cap V_{G_i}$ induces a split graph while $W \cap V_{G_{1-i}}$
        is an independent set. It follows that $W = V_i \cup V_{1-i}$, where
        $V_i$ is a subset of $V_{G_i}$ inducing a maximum split graph, $V_{1-i}$
        is a maximum independent subset of $V_{G_{1-i}}$, and $\lvert W \rvert =
        \max_{i\in\{0,1\}} \{ \lvert \mathsf{MI}_i \cup \mathsf{MS}_{1-i} \rvert
        \}$.

    \item Let $W$ be a set inducing a maximum cluster of $G$. Clearly, for each
        $i \in \{0, 1\}$, $W \cap V_{G_i}$ induces a cluster. It follows that
        $W$ is the union of a set inducing a maximum cluster of $G_0$ with a set
        inducing a maximum cluster of $G_1$.

    \item Let $W$ be a set inducing a maximum complete multipartite subgraph of
        $G$. If $W$ is an independent set, it is evidently a maximum independent
        set of $G$. Otherwise, $G[W]$ is a connected graph, so $W$ is completely
        contained in $V_{G_i}$ for some $i \in \{0, 1\}$, and therefore, $W$
        induces a maximum complete multipartite subgraph of $G_i$. In any case
        we have that $\lvert W \rvert = \max \{ \lvert \mathsf{MI} \rvert,
        \lvert \mathsf{MJI}_0 \rvert, \lvert \mathsf{MJI}_1 \rvert \}$.

    \item Let $W$ be a set inducing a maximum monopolar subgraph of $G$.
        Evidently, for any $i \in \{0, 1\}$, $W \cap V_{G_i}$ induces a
        monopolar graph, so we have that $W$ is the union of a set inducing a
        maximum monopolar subgraph of $G_0$ with a set inducing a maximum
        monopolar subgraph of $G_1$.

    \item Let $W$ be a set inducing a maximum co-monopolar subgraph	of $G$, and
        let $(A, B)$ be a partition of $W$ such that $A$ induces a complete
        multipartite graph and $B$ is a clique. Since $B$ is a clique, it is
        completely contained in either $V_{G_0}$ or $V_{G_1}$. Now, if $A$ is an
        independent set, then $W = V_i \cup V_{1-i}$ for some $i \in \{0, 1\}$,
        where $V_i$ induces a maximum split subgraph of $G_i$ and $V_{1-i}$
        induces a maximum independent set of $G_{1-i}$. Otherwise, if $A$ is not
        an independent set, it induces a connected graph and is contained in
        either $V_{G_0}$ or $V_{G_1}$; hence, either $W$ induces a maximum
        co-monopolar subgraph of $G_i$ for some $i \in \{0, 1\}$, or there
        exists $i \in \{0,1\}$ such that $W$ is the union of a maximum clique in
        $G_i$ and a set inducing a maximum complete multipartite subgraph of
        $G_{1-i}$.

    \item Let $W$ be a set inducing a maximum polar subgraph of $G$, and let
        $(A, B)$ be a polar partition of $G[W]$. If $A$ is an independent set,
        then $W \cap V_{G_i}$ induces a monopolar subgraph of $G_{i}$ for each
        $i \in \{0, 1\}$, so $W$ induces a maximum monopolar subgraph of $G$.
        Otherwise, if $A$ is not an independent set, $G[A]$ is connected and $A$
        is completely contained in $V_{G_i}$ for some $i \in \{0, 1\}$; hence,
        $W$ is the union of a set inducing a maximum polar subgraph of $G_i$
        with a set inducing a maximum cluster of $G_{1-i}$.

    \item Let $W$ be a set inducing a maximum unipolar subgraph of $G$, and let
        $(A, B)$ be a unipolar partition of $G[W]$. Since $A$ is a clique, it is
        completely contained in $V_{G_i}$ for some $i\in\{0,1\}$. Thus, $W \cap
        V_{G_{1-i}}$ induces a cluster and $W\cap V_{G_i}$ induces a unipolar
        graph, so $W$ is the union of a set inducing a maximum unipolar subgraph
        of $G_i$ with a set inducing a maximum cluster in $G_{1-i}$.

    \item Let $W$ be a set inducing a maximum co-unipolar subgraph of $G$, and
        let $(A,B)$ be a unipolar partition of $\overline{G[W]}$. Since $G[B]$
        is a complete multipartite graph, if $B \cap V_{G_1} \ne \varnothing$
        and $B \cap V_{G_2} \ne \varnothing$, $B$ is an independent set, so $W$
        induces a bipartite graph. Otherwise, $B \cap V_{G_i} = \varnothing$ for
        some $i\in\{0,1\}$, and we have that $W \cap V_{G_i}$ is an independent
        set and $W \cap V_{G_{1-i}}$ induces a co-unipolar graph. The result
        follows easily from here.
  \end{enumerate}
\end{proof}

Since $G \oplus H = \overline{\overline{G} + \overline{H}}$ for any pair of
graphs $G$ and $H$, the following statement is an immediate consequence of the
previous proposition, so we omit the proof. Notice that, by \Cref{theo:cogChar},
\Cref{prop:maxUnion,prop:maxJoin} can be used together in a mutual recursive
algorithm to determine the maximum subgraphs listed in them for any cograph.

\begin{proposition}
\label{prop:maxJoin}
Let $G = G_0 \oplus G_1$ be a graph, and let $W$ be a subset of $V_G$. The
following statements hold true.
\begin{enumerate}
  \item $W$ is a maximum clique of $G$ if and only if $W$ is a witness of
    $\max \{ \lvert \mathsf{MC}_0 \cup \mathsf{MC}_1 \rvert \}$.

  \item $W$ is a maximum independent set of $G$ if and only if $W$ is a witness
    of $\max \{ \lvert \mathsf{MI_0} \rvert, \lvert \mathsf{MI_1} \rvert \}$.

  \item $W$ induces a maximum bipartite subgraph of $G$ if and only if $W$ is a
    witness of \( \max \{ \lvert \mathsf{MB_0} \rvert, \lvert \mathsf{MB_1}
    \rvert,$ $\lvert \mathsf{MI_0} \cup \mathsf{MI_1} \rvert \}. \)

  \item $W$ induces a maximum co-bipartite subgraph of $G$ if and only if $W$ is
    a witness of $\max \{ \lvert \mathsf{McB_0} \cup \mathsf{McB_1} \rvert \}$.

  \item $W$ induces a maximum split subgraph of $G$ if and only if, $W$ is a
    witness of $\max_{ i \in \{0,1\}} \{ \lvert \mathsf{MC_i} \cup
    \mathsf{MS_{1-i}} \rvert \}$.

  \item $W$ induces a maximum cluster in $G$ if and only if $W$ is a witness of
    $\max \{ \lvert \mathsf{MC} \rvert, \lvert \mathsf{MUC_0} \rvert, \lvert
    \mathsf{MUC_1} \rvert \}$.

  \item $W$ induces a maximum complete multipartite graph of $G$ if and only if
    $W$ is a witness of
    \[
        \max \{ \lvert \mathsf{MUI_0} \cup \mathsf{MUI_1} \rvert \}.
    \]

  \item $W$ induces a maximum monopolar subgraph of $G$ if and only if $W$ is a
    witness of
    \[
      \max_{i\in\{0, 1\}} \{ \lvert \mathsf{MS_i} \cup \mathsf{MC_{1-i}} \rvert,
      \lvert \mathsf{MM_i} \rvert,
      \lvert \mathsf{MI_i} \cup \mathsf{MUC_{1-i}} \rvert \}.
    \]

  \item $W$ induces a maximum co-monopolar subgraph of $G$ if and only if $W$ is
    a witness of $\max \{ \lvert \mathsf{McM_0} \cup \mathsf{McM_1} \rvert \}$.

  \item $W$ induces a maximum polar subgraph of $G$ if and only if $W$ is a
    witness of
    \[
      \max \{ \lvert \mathsf{McM} \rvert,
      \lvert \mathsf{MP_0} \cup \mathsf{MJI_{1}} \rvert,
      \lvert \mathsf{MP_1} \cup \mathsf{MJI_{0}} \rvert \}.
    \]

  \item $W$ induces a maximum unipolar subgraph of $G$ if and only if $W$ is a
    witness of
    \[
      \max \{ \lvert \mathsf{McB} \rvert,
      \lvert \mathsf{MU_1} \cup \mathsf{MC_0} \rvert,
      \lvert \mathsf{MU_0} \cup \mathsf{MC_1} \rvert \}.
    \]

  \item $W$ induces a maximum co-unipolar subgraph of $G$ if and only if $W$ is
    a witness of
    \[
      \max \{ \lvert \mathsf{McU_0} \cup \mathsf{MJI_1} \rvert,
      \lvert \mathsf{McU_1} \cup \mathsf{MJI_0} \rvert \}.
    \]

\end{enumerate}
\end{proposition}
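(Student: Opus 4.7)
The plan is to deduce \Cref{prop:maxJoin} directly from \Cref{prop:maxUnion} via the complementation identity $G_0 \oplus G_1 = \overline{\overline{G_0} + \overline{G_1}}$, as suggested by the remark preceding the statement. The underlying observation is that for any vertex subset $W$, the induced subgraph $G[W]$ has a property $\mathcal{P}$ if and only if $\overline{G}[W]$ has the complementary property $\overline{\mathcal{P}}$, and since $|W|$ is unchanged under complementation, a maximum $\mathcal{P}$-subgraph of $G$ is exactly the same set as a maximum $\overline{\mathcal{P}}$-subgraph of $\overline{G}$.

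First I would record the dictionary of dual properties induced by complementation: cliques and independent sets swap, bipartite and co-bipartite graphs swap, clusters and complete multipartite graphs swap, monopolar and co-monopolar swap, unipolar and co-unipolar swap, while being a split graph and being a polar graph are self-complementary. In the paper's notation this reads $\mathsf{MC}(G) = \mathsf{MI}(\overline{G})$, $\mathsf{MB}(G) = \mathsf{McB}(\overline{G})$, $\mathsf{MUC}(G) = \mathsf{MJI}(\overline{G})$, $\mathsf{MM}(G) = \mathsf{McM}(\overline{G})$, $\mathsf{MU}(G) = \mathsf{McU}(\overline{G})$, $\mathsf{MS}(G) = \mathsf{MS}(\overline{G})$, and $\mathsf{MP}(G) = \mathsf{MP}(\overline{G})$ (and likewise for the subgraphs $G_0, G_1$).

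Then, for each of the twelve items, I would apply the dually-paired item of \Cref{prop:maxUnion} to $\overline{G} = \overline{G_0} + \overline{G_1}$ and substitute using the dictionary. For instance, to obtain item 8 on maximum monopolar subgraphs of $G_0 \oplus G_1$, I would invoke item 9 of \Cref{prop:maxUnion} on $\overline{G}$, which yields the optimum
\[
  \max_{i\in\{0,1\}} \{ \lvert \mathsf{MS}(\overline{G_i}) \cup \mathsf{MI}(\overline{G_{1-i}}) \rvert, \lvert \mathsf{McM}(\overline{G_i}) \rvert, \lvert \mathsf{MC}(\overline{G_i}) \cup \mathsf{MJI}(\overline{G_{1-i}}) \rvert \},
\]
and translating each of the inner quantities back to $G_i$ via the dictionary reproduces exactly the expression in \Cref{prop:maxJoin} item 8. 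The self-dual cases (clique/independent set, split, polar) correspond to pairing an item with its own dual and again produce the claimed formulas after translation.

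The only real obstacle is bookkeeping: one must check that each of the twelve items in \Cref{prop:maxJoin} matches the dualized right-hand side of the correct item in \Cref{prop:maxUnion}, being particularly careful with the indices $i$ and $1-i$ in the asymmetric items (5, 8, 9, 11, 12) and verifying that max-of-symmetric-expressions on one side corresponds to max-over-$i$ expressions on the other. Since no new combinatorial argument is required beyond the complementation duality, I would either present the translation for one illustrative item (say item 8 or item 10) and leave the remaining items to the reader, exactly as the authors do by omitting the proof.
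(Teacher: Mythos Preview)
Your proposal is correct and follows exactly the approach the paper takes: the authors state that \Cref{prop:maxJoin} is an immediate consequence of \Cref{prop:maxUnion} via the identity $G_0 \oplus G_1 = \overline{\overline{G_0} + \overline{G_1}}$ and omit the proof entirely. Your complementation dictionary and the worked translation of item 8 from item 9 of \Cref{prop:maxUnion} carry this out explicitly, which is more than the paper itself provides.
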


In the next sections, we characterize maximum subgraphs related to polarity
properties in both $P_4$-sparse and $P_4$-extendible graphs, and we use such
characterizations to give linear time algorithms to find the largest subgraphs
with such properties in a given graph of the mentioned graph families.


\subsection{Largest polar subgraph in \texorpdfstring{$P_4$}{P4}-sparse graphs}

We start by introducing a tree representation for $P_4$-sparse graphs which is
the base for our algorithms. Let $G_1 = (V_1, \varnothing)$ and $G_2 = (V_2,
E_2)$ be disjoint graphs such that $V_2 = K \cup R \cup \{s_0\}$, where $K$ is a
clique completely adjacent to $R$, $\lvert K \rvert = \lvert V_1 \rvert +1 \ge
2$ and either $N_{G_2}(s_0) = \{k_0\}$ or $N_{G_2}(s_0) = K \setminus \{k_0\}$
for some vertex $k_0$ in $K$. Let $f$ be a bijection from $V_1$ to $K \setminus
\{k_0\}$. We define $G_1 \spider G_2$ as the graph $G$ with vertex set $V_1 \cup
V_2$ such that $G[V_1] \cong G_1$, $G[V_2] \cong G_2$ and, for each $s\in V_1$,
either $N_G(s) = \{f(s)\}$, provided $N_{G_2}(s_0) = \{k_0\}$, or $N_G(s) = K
\setminus \{f(s)\}$ otherwise.

\begin{proposition}
\cite{jamisonDAM35}
    If $G$ is a graph, then $G$ is a spider if and only if there exist graphs
    $G_1$ and $G_2$ such that $G = G_1 \spider G_2$.
\end{proposition}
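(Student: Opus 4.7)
The plan is to prove both directions of the equivalence by directly matching the data required by each definition. No substantial mathematical content is needed; both implications reduce to picking out the correct partition and bijection.

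For the forward direction, suppose $G = G_1 \spider G_2$ with the data from the definition: $V_1$ is independent, $V_2 = K \cup R \cup \{s_0\}$ with $K$ a clique completely adjacent to $R$, $\lvert K \rvert = \lvert V_1 \rvert + 1 \ge 2$, $f : V_1 \to K \setminus \{k_0\}$ is a bijection, and the edges between $V_1$ and $K$ follow the ``thin'' or ``thick'' pattern (consistent with the choice of $N_{G_2}(s_0)$). I will set $S = V_1 \cup \{s_0\}$ and keep $K$ and $R$ as in $G_2$. Then $S$ is independent (its vertices have neighbours only in $K$), $\lvert S \rvert \ge 2$, and $R$ is completely nonadjacent to $S$ by construction. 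Extend $f$ to a bijection $g : S \to K$ by $g(s_0) = k_0$; then the definition of $\spider$ guarantees that either $N_G(s) = \{g(s)\}$ for every $s \in S$ or $N_G(s) = K \setminus \{g(s)\}$ for every $s \in S$, depending on which case of $N_{G_2}(s_0)$ applies. Hence $(S, K, R)$ is a spider partition of $G$.

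For the converse, suppose $G$ is a spider with partition $(S, K, R)$ and bijection $f : S \to K$. I will pick any vertex $s_0 \in S$ (note $\lvert S \rvert \ge 2$ so $S \setminus \{s_0\} \ne \varnothing$), let $k_0 = f(s_0)$, set $V_1 = S \setminus \{s_0\}$, $V_2 = K \cup R \cup \{s_0\}$, $G_1 = G[V_1]$ and $G_2 = G[V_2]$. Since $S$ is independent, $G_1$ is edgeless. Inside $G_2$ the set $K$ remains a clique completely adjacent to $R$, the equality $\lvert K \rvert = \lvert S \rvert = \lvert V_1 \rvert + 1 \ge 2$ holds, and because $s_0$ has no neighbours in $R$, either $N_{G_2}(s_0) = \{k_0\}$ or $N_{G_2}(s_0) = K \setminus \{k_0\}$, matching the thin/thick alternative in the definition. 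The restriction of $f$ to $V_1$ is a bijection onto $K \setminus \{k_0\}$, and the neighbourhoods in $G$ of vertices of $V_1$ are exactly those prescribed by $\spider$. Thus $G = G_1 \spider G_2$.

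The only step that requires any care is verifying that the thin/thick alternative chosen in $G_2$ (through $N_{G_2}(s_0)$) forces the same alternative on all other legs in $V_1$; but this is immediate from the hypothesis that a single bijection $f$ governs all legs of the spider simultaneously. Consequently, both implications amount to bookkeeping, and the equivalence follows.
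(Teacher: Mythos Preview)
Your proof is correct. The paper does not supply its own proof of this proposition: it is stated with a citation to \cite{jamisonDAM35} and left without argument, since it is essentially a restatement of the spider definition in terms of the $\spider$ operation. Your argument is exactly the routine bookkeeping one would expect---splitting off one leg $s_0$ to form $G_2$ and putting the remaining legs into $G_1$ (and conversely)---and all the verifications you list are accurate, including the observation that the thin/thick alternative for $s_0$ propagates to all of $V_1$ because a single bijection governs every leg simultaneously.
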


By \Cref{thm:connCharSparse}, for any nontrivial $P_4$-sparse graph $G$, either
$G$ is disconnected, or $\overline{G}$ is disconnected, or $G$ is an spider.
Hence, for each $P_4$-sparse graph $G$, a labeled tree $T$ with $G$ as its root
and some subgraphs of $G$ as each node can be constructed in the following way.
Let $H$ be a node of $T$. If $H$ is a trivial graph, it is an unlabeled node in
$T$ with no children. If $H$ is a disconnected graph, it is labeled as a
\textit{$0$-node} and its children are its connected components. If
$\overline{H}$ is disconnected, $H$ is labeled as a \textit{$1$-node} and its
children are the complements of the connected components of $\overline{H}$.
Finally, if $H$ is a spider, let say $H = H_1 \spider H_2$, $H$ is labeled as a
\textit{$2$-node} and its children are $H_1$ and $H_2$. The labeled tree
constructed in this way is called the \textit{ps-tree} of $G$. The ps-tree of a
$P_4$-sparse graph was introduced by Jamison and Olariu in
\cite{jamisonSIAMJC21}, where they proved that such representation can be
computed in linear time.   It follows from the results in \cite{jamisonSIAMJC21}
that the ps-tree of any $P_4$-sparse graph or order $n$ has $O(n)$ nodes.
Particularly, it implies that we can compute the lists of children for each node
of a ps-tree $T$ in linear time and provide each node with such list preserving
the linear space representation for $T$. Additionally, having the lists of
children dor each node of a ps-tree, we can compute in $O(n)$ time the number of
unlabeled children that each node has.  This will be helpful later.   In what
follow, we assume that if $T$ is the ps-tree of $G$, and $x$ is a node of $T$,
then $c_1x, c_2x, \dots$ denote the children of $x$. We will use $G_x$ to
represent the subgraph of $G$ induced by the leaf descendants of $x$ in $T$.

The following proposition implies that, given a ps-tree, we can decide in linear
time whether the graphs associated to its nodes labeled 2 are thin spiders or
thick spiders.

\begin{proposition}
    Let $G = G_1 \spider G_2$ be a spider, and let $T$ be its ps-tree.  Let $w$
    be the only child of $G$ with label $1$ in $T$. If $w$ has two or more
    unlabeled children, then $G$ is a thick spider. Otherwise, $G$ is a thin
    spider.
\end{proposition}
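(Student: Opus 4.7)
The plan is to argue that $w$ must be the child $G_2$ produced by the factorization $G=G_1\spider G_2$, and then count the trivial components of $\overline{G_2}$ in the thin and thick cases separately. The whole argument is essentially a careful bookkeeping of what happens when we complement $G_2$.

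First I would verify that $w=G_2$. The children of the $2$-node $G$ in $T$ are precisely $G_1$ and $G_2$. Since $G_1$ is edgeless, it is either a single vertex (an unlabeled leaf) when $|V_1|=1$, or a disconnected graph, hence a $0$-node, when $|V_1|\geq 2$; in either case $G_1$ is not a $1$-node. On the other hand, $G_2$ is connected, because $K$ is a clique completely adjacent to $R$ and $s_0$ is adjacent to at least one vertex of $K$, while $\overline{G_2}$ is disconnected for reasons to be established in the next step. Hence $G_2$ is indeed the unique $1$-labeled child of $G$, i.e., $w=G_2$.

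Next I would determine the components of $\overline{G_2}$ in each case. In $\overline{G_2}$ the set $K$ is an independent set completely non-adjacent to $R$, and $s_0$ is completely adjacent to $R$ (since in $G_2$ the head $R$ and the leg $s_0$ are completely non-adjacent). In the \emph{thin} case $s_0$ is adjacent in $\overline{G_2}$ to $K\setminus\{k_0\}$ and non-adjacent to $k_0$, so $k_0$ becomes isolated and the remaining vertices $(K\setminus\{k_0\})\cup R\cup\{s_0\}$ form one connected component through $s_0$. In the \emph{thick} case $s_0$ is adjacent in $\overline{G_2}$ only to $k_0$ (among $K$) and to all of $R$, so every vertex of $K\setminus\{k_0\}$ is isolated in $\overline{G_2}$ while $\{k_0\}\cup R\cup\{s_0\}$ forms a single component. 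Both analyses also re-confirm that $\overline{G_2}$ is disconnected, closing the gap left above.

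Since the children of the $1$-node $w$ are the complements of the connected components of $\overline{G_2}$, we can now count the unlabeled (i.e., single-vertex) children: in the thin case $w$ has exactly $1$ unlabeled child (from the isolated $k_0$) together with one nontrivial child of order $|K|-1+|R|+1\geq 2$; in the thick case $w$ has exactly $|K|-1$ unlabeled children (one per vertex of $K\setminus\{k_0\}$) plus one nontrivial child. For $|K|\geq 3$, the thick count is at least $2$ while the thin count is always $1$, which gives the two implications of the proposition. The only borderline situation is $|K|=2$, where the two spider definitions coincide, so $G$ is simultaneously thin and thick; here $w$ has $1$ unlabeled child and the statement correctly classifies $G$ as thin. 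The main (and really only) obstacle is keeping the incidences straight when complementing $G_2$; everything else is a routine count.
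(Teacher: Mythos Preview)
Your proof is correct and follows essentially the same approach as the paper: both arguments identify $w$ with $G_2$ and then count the trivial components of $\overline{G_2}$ (equivalently, the unlabeled children of $w$). The paper compresses your case analysis into the single observation that a vertex of $G_2$ is universal if and only if it is adjacent to the unique leg $s_0\in V_2$, so the number of unlabeled children of $w$ equals $d_G(s_0)$; your explicit computation of the components of $\overline{G_2}$ in the thin and thick cases amounts to verifying exactly this.
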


\begin{proof}
  Let $v$ be the only leg of $G$ in $G_2$. Observe that a vertex of $G_2$ is a
  universal vertex if and only if it is adjacent to $v$. Additionally, a vertex
  of $G_2$ is universal if and only if it is an unlabeled child of $w$. Hence,
  if $w$ has two or more unlabeled children, the degree of $v$ in $G$ is at
  least two, so $G$ is a thick spider. Otherwise, if $w$ has precisely one
  unlabeled child, $d_G(v) = 1$ so that $G$ is a thin spider.
\end{proof}

Some of the algorithms we give in this section require us to be able to
recognize the spider partition of any spider from its associated ps-tree.
Nevertheless, this is not always possible, for instance, if we consider any thin
spider whose head complement is disconnected, there will be vertices for which
it is impossible to decide from the associated ps-tree if they belong to the
body or the head of the spider (see \Cref{fig:psTreeP4}).

\begin{figure}[ht!]
\centering
\begin{tikzpicture}

  \begin{scope}[scale=1, yscale=0.5]

  \node [node] (6) at (0,0)[]{$2$};
  \node [vertex] (7) at (-1,-1)[label=270:$s_1$]{};
  \node [node] (8) at (1,-1)[]{$1$};
  \node [vertex] (9) at (0,-2)[label=270:$k_0$]{};
  \node [node] (1) at (2,-2)[]{$0$};
  \node [vertex] (2) at (1,-3)[label=270:$s_0$]{};
  \node [node] (3) at (3,-3)[]{$1$};
  \node [blackV] (4) at (2,-4)[label=270:$k_1$]{};
  \node [blackV] (5) at (3,-4.3)[label=270:$r_1$]{};
  \node [node] (10) at (4,-4)[]{$0$};
  \node [vertex] (11) at (4,-5.3)[label=270:$r_0$]{};
  \node [vertex] (12) at (5,-5)[label=270:$r_2$]{};

  \foreach \from/\to in {6/7,6/8,8/9,8/1,1/2,1/3,3/4,3/5,3/10,10/11,10/12}
    \draw [edge] (\from) to (\to);
  \end{scope}

\end{tikzpicture}

\caption{The ps-tree associated to the thin spider with 2 legs whose head is
isomorphic to $P_3$. The solid vertices are indistinguishable, but one of them
belong to the body of the spider, and the other one belongs to its head.}
\label{fig:psTreeP4}
\end{figure}

However, it is clear that, given a ps-tree $T$, there is a unique $P_4$-sparse
graph (up to isomorphism) associated with $T$, and it results that if we fix a
spider partition for any node labeled 2 in $T$, the graph is completely
determined. Next, we explain how to fix the spider partition for such nodes, and
how to save this data maintaining the linear space needed for storing $T$.

Let $G = G_1 \spider G_2$ be a thin spider, and let $T$ be its associated
ps-tree. Let $V_1, V_2, K, R$, and $s_0$ be like in the definition of $G_1
\spider G_2$, and assume that $N_{G_2}(s_0) = \{k_0\}$. Clearly, the root $r$ of
$T$ is labeled 2, and it  has precisely two children in $T$, namely a child $v$
labeled 1 such that $G_v \cong G_2$, and a child $u$, which is unlabeled if $
\lvert V_1 \rvert =1$, or it is labeled 0 otherwise; we call $v$ the 1-child of
$r$. In addition, since $G$ is a thin spider, $G_2$ can be obtained from $G[R
\cup (K \setminus \{k_0\})]$ by adding first an isolated vertex $s_0$ and then a
universal vertex $k_0$. Thus, $v$ has precisely two children, namely an
unlabeled child ($k_0$) and a 0-labeled child $w$, which we will call the
0-child of $v$. Finally, if $\lvert K \rvert > 2$ or $R \ne \varnothing$, $w$
has exactly two children, one unlabeled ($s_0$) and one child $x$ labeled 1
($G[R \cup (K \setminus \{k_0\})]$) called the 1-child of $w$. Otherwise, if $
\lvert K \rvert = 2$ and $R = \varnothing$ (in which case $G \cong P_4$), $w$
has exactly two unlabeled children, namely $s_0$ and the only vertex $x$ in the
singleton $K \setminus \{k_0\}$ (see \Cref{fig:psTreeThin}).

  \begin{figure}[ht!]
  \centering
  \begin{tikzpicture}

    \begin{scope}[scale=1]

    \node [node] (0) at (0,0)[label=0:$r$]{$2$};

    \node [vertex] (1) at (-1,-1)[label=270:{\footnotesize $G_1$}]{};
    \node [vertex] (u) at (-1,-1)[label=180:{$u$}]{};
    \node [node] (2) at (1,-1)[label=0:$v$]{$1$};

    \node [vertex] (6) at (0,-2)[label=270:$k_0$]{};
    \node [node] (7) at (2,-2)[label=0:$w$]{$0$};

    \node [vertex] (8) at (1,-3)[label=270:$s_0$]{};
    \node [vertex] (9) at (3,-3)[label=270:{\footnotesize $G_2[R\cup (K
    \setminus\{k_0\})]$}]{};
    \node [vertex] (x) at (3,-3)[label=0:$x$]{};

    \foreach \from/\to in {0/1,0/2,2/6,2/7,7/8,7/9}
      \draw [edge] (\from) to (\to);
    \end{scope}

  \end{tikzpicture}
  
  \caption{General structure of the ps-tree of a thin spider.}
  \label{fig:psTreeThin}
  \end{figure}

As we mentioned before, if $\lvert K \rvert =2$ and $R = \varnothing$, then $w$
has precisely two children, $s_0$ and $x$, both of them unlabeled. Notice that
in $G$, precisely one child of $w$ is adjacent to the 0-child of $r$, but we are
not able to distinguish from the ps-tree which child of $w$ is such vertex, so
we must choose arbitrarily some of them to fix a spider partition (which will
completely determine a graph $G'$ isomorphic to $G$, but possibly different from
it, whose ps-tree is $T$ and has the fixed spider partition).  Now, if $R$
induces either a disconnected graph or a spider, then $x$ has precisely $\lvert
K \rvert -1$ unlabeled children, all of them elements of $K$. Nevertheless, if
the complement of $R$ is disconnected, then there are potentially more than $
\lvert K \rvert -1$ unlabeled children of $x$, and they will be
indistinguishable, so we must choose arbitrarily $\lvert K \rvert -1$ of them
to fix a spider partition.

Now, let $G = G_1 \spider G_2$ be a thick spider which is not a thin spider, and
let $T$ be its associated ps-tree. Let $V_1, V_2, K, R$, and $s_0$ be like in
the definition of $G_1 \spider G_2$, and assume that $N_{G_2}(s_0) = K \setminus
\{k_0\}$. As before, the root $r$ of $T$ is labeled 2, and it has a child $v$
labeled 1, and a child $u$ labeled 0. Since $G$ is a thick spider, $G_v$ is the
join of $G[K]-k_0$ with the disjoint union of the graph obtained from $G[R \cup
\{k_0\}]$ by adding an isolated vertex $s_0$.   Thus, $v$ has precisely $\lvert
K \rvert$ children, $\lvert K \rvert -1$ unlabeled children and a 0-labeled
child $w$. Finally, since $\lvert K \rvert \ge 3$ (because $G$ is not a thin
spider), $w$ has exactly two children, one unlabeled ($s_0$) and one child $x$
labeled 1 ($G[R \cup \{k_0\}]$). Similarly to the case of thin spiders, if $R$
induces either a disconnected graph or a spider, then $x$ has precisely one
unlabeled child, $k_0$. Nevertheless, if the complement of $R$ is disconnected,
then there are potentially more than one unlabeled children of $x$, and they
will be indistinguishable, so we must chose arbitrarily one unlabeled child to
fix a spider partition.

As we have seen, to fix the spider partition of a node labeled 2 it is enough to
select some unlabeled descendants of such node which will completely determine
the body of the associated spider, as well as the entire spider partition.
Moreover, we can simply mark the selected vertices for the body of any node
labeled 2 and, since these marked vertices are considered only for the spider
partition of their great great grandfather (or great grandfather) in the
ps-tree, we can save and process the vertices of the bodies of each node labeled
2 in $O(n)$ space and time, in such a way that any time we need a spider
partition of such nodes we use the same fixed partition. It is worth noticing
that we could simultaneously mark the vertices of the spider bodies while
constructing the ps-tree of a $P_4$-sparse graph, avoiding the extra processing
time and ensuring that we can recover with precision the original graph from the
ps-tree.

The following proposition is to thin spiders as \Cref{prop:maxUnion} is to
disconnected graphs. In it, we characterize maximum subgraphs of thin spiders
with some properties related to polarity.

\begin{proposition}
\label{prop:maxThinSp}
Let $G = (S, K, R)$ be a thin spider and let $f : S \to K$ be the bijection
such that $N(s) = \{f(s)\}$ for each $s \in S$. Let $H$ be the subgraph of
$G$ induced by $R$. The following statements hold for any subset $W$ of
$V_G$.
\begin{enumerate}
	\item $W$ is a maximum clique of $G$ if and only if $W$ is a witness of
  \( \max_{s\in S} \{
    \lvert \{s, f(s)\} \rvert,
    \lvert K \cup \mathsf{MC}(H) \rvert \}.
  \)

	\item $W$ is a maximum independent set in $G$ if and only if $W$ is a
  witness of $\max_{s\in S} \{ \lvert \{f(s)\} \cup (S \setminus \{s\}) \rvert,
  \lvert S \cup \mathsf{MI}(H) \rvert \}$.

  \item $W$ induces a maximum bipartite subgraph of $G$ if and only if $W$ is a
  witness of
  \[ \max_{k_1,k_2 \in K} \{
    \lvert S \cup \{k_1, k_2\} \rvert,
    \lvert \mathsf{MI}(H) \cup S \cup \{k_1\} \rvert,
    \lvert \mathsf{MB}(H) \cup S \rvert\}.
  \]

  \item $W$ induces a maximum co-bipartite subgraph of $G$ if and only if $W$ is
  a witness of
  \[ \max_{s_1,s_2\in S} \{
    \lvert \{s_1, s_2, f(s_1), f(s_2)\} \rvert,
    \lvert \mathsf{MC}(H) \cup K \cup \{s_1\} \rvert,
    \lvert \mathsf{McB}(H) \cup K \rvert \}.
  \]

  \item $W$ induces a maximum split subgraph of $G$ if and only if $W$ is a
	witness of \( \max \{ \lvert S \cup K \cup \mathsf{MS}(H) \rvert \}. \)

	\item $W$ induces a maximum cluster in $G$ if and only if $W$ is a witness
	of
    \[
        \max_{\substack{k \in K \\ W' \in \mathcal{X}}} \{ \lvert S \cup \{k\}
        \rvert,
        \lvert S \cup \mathsf{MUC}(H) \rvert,
        \lvert \mathsf{MC}(H) \cup W' \rvert \},
    \]
    where $\mathcal{X}$ is the family of all $\lvert S \rvert$-subsets $W'$ of
    $S \cup K$ such that $\{s, f(s)\} \not \subseteq W'$ for any $s \in S$.

	\item $W$ induces a maximum complete multipartite subgraph of $G$ if and
    only $W$ is a witness of
    \[ \max_{s_1,s_2 \in S} \{
        \lvert \{s_1, f(s_1), f(s_2)\} \rvert,
        \lvert \{f(s_1)\} \cup (S \setminus \{s_1\}) \rvert,
        \lvert \{s_1, f(s_1)\} \cup \mathsf{MI}(H) \rvert,
    \]
    \[
        \lvert S \cup \mathsf{MI}(H) \rvert,
        \lvert K \cup \mathsf{MJI}(H) \rvert
        \}.
    \]

  \item $W$ induces a maximum monopolar subgraph of $G$ if and only	$W$ is a
  witness of
  \[ \max_{k \in K} \{
    \lvert S \cup K \cup \mathsf{MS}(H) \rvert,
    \lvert S \cup \{k\} \cup \mathsf{MUC}(H) \rvert,
    \lvert S \cup \mathsf{MM}(H) \rvert
    \}.
  \]

  \item $W$ induces a maximum co-monopolar subgraph of $G$ if and only	$W$ is a
  witness of
  \[ \max_{s \in S} \{
    \lvert S \cup K \cup \mathsf{MS}(H) \rvert,
    \lvert K \cup \{s\} \cup \mathsf{MJI}(H) \rvert,
    \lvert K \cup \mathsf{McM}(H) \rvert
    \}.
  \]

    \item $W$ induces a maximum polar subgraph of $G$ if and only if $W$ is a
	    witness of \( \max \{ \lvert S \cup K \cup \mathsf{MP}(H) \rvert \}. \)

	\item $W$ induces a maximum unipolar subgraph of $G$ if and only if $W$ is a
	    witness of \( \max \{ \lvert S \cup K \cup \mathsf{MU}(H) \rvert \}. \)

	\item $W$ induces a maximum co-unipolar subgraph of $G$ if and only if $W$
	    is a witness of \( \max \{ \lvert S \cup K \cup \mathsf{McU}(H) \rvert
	    \}. \)
\end{enumerate}
\end{proposition}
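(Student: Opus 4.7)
My plan mirrors the proof of Proposition \ref{prop:maxUnion}: for each of the twelve items, I would argue by a case analysis on how the optimal $W$ intersects the three pieces $S$, $K$, and $R$ of the thin-spider partition. The three structural facts I would invoke throughout are (a) $S$ is an independent set completely nonadjacent to $R$, (b) $K$ is a clique completely adjacent to $R$, and (c) the bijection $f$ gives each $s \in S$ its unique neighbor $f(s) \in K$. Together with $|S| = |K| \ge 2$, these pin down how much a vertex of $S$ or $K$ ``costs'' for each target structure and force $W$ into a small number of canonical shapes.

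I would begin with the easiest items (clique, independent set, bipartite, co-bipartite, split). For these, either the clique-size or the independence-size constraint bounds how many vertices of $S$ or $K$ can lie on each side, leaving only a handful of templates. For instance, any clique contains at most one $s \in S$, and extending it by $S$-vertices requires only $f(s)$; otherwise the clique lives in $K \cup R$, where the complete adjacency of $K$ to $R$ collapses it to $K \cup \mathsf{MC}(H)$. The split, polar, unipolar and co-unipolar items (5 and 10--12) are the cleanest: placing $S$ and $K$ on opposite sides of the required partition is always safe because $S$ joins the independent/cluster side as isolated pieces and $K$ behaves as the clique/multipartite side completely adjacent to the $R$-part; this reduces the problem to optimizing only the $H$-part, producing expressions of the form $S \cup K \cup \mathsf{M{*}}(H)$.

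The delicate items are the cluster (6), complete multipartite (7), monopolar (8), and co-monopolar (9), because the join between $K$ and $R$ and the matching $f$ create interactions that force mutually exclusive choices. For the cluster, if $|W \cap K| \ge 2$ then every $r \in W \cap R$ is absorbed into the same component as $W \cap K$ and that component must be a clique; moreover no $s \in S$ with $f(s) \in W$ can be added without creating a $P_3$, which explains the family $\mathcal{X}$ of $|S|$-subsets of $S \cup K$ selecting at most one endpoint of each matching edge. The remaining cluster shapes arise from $|W \cap K| \le 1$ and yield the first two witnesses. Items (7)--(9) follow parallel case splits: for (7), a single $s_1 \in W \cap S$ forces any other part of the complete multipartite subgraph to consist only of $f(s_1)$, generating the $\{s_1, f(s_1)\} \cup \mathsf{MI}(H)$ and $\{s_1, f(s_1), f(s_2)\}$ witnesses, while $W \cap S = \varnothing$ yields the $K \cup \mathsf{MJI}(H)$ witness; items (8) and (9) split according to whether $K$-vertices sit on the cluster/multipartite or on the independent/clique side of the polar-type partition.

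The hard part will be pure bookkeeping: twelve items times several subcases each produce a long verification, and for every listed witness one must check both that it induces the desired structure and that no alternative configuration can exceed it. The plan is to write out one representative item in full (I would choose item 6, since the family $\mathcal{X}$ makes it the most intricate), and for the remaining items to indicate explicitly that they follow the identical template, listing only the case distinctions and the resulting maxima. The risk of omitting a case will be mitigated by always partitioning first on $|W \cap S|$ and $|W \cap K|$, which gives a uniform skeleton across all twelve items.
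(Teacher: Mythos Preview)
Your proposal is correct and follows essentially the same approach as the paper: both proceed by a case analysis on how an optimal $W$ meets the pieces $S$, $K$, $R$ of the thin-spider partition, using exactly the three structural facts you list, and both single out one item as representative (the paper picks item~7, you pick item~6) while declaring the rest analogous. The only cosmetic difference is that the paper tends to branch first on the shape of $W\cap R$ (empty, a clique, a non-complete cluster, etc.) rather than on $|W\cap S|$ and $|W\cap K|$; either organization leads to the same canonical witnesses, so your plan would produce an equivalent proof.
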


\begin{proof}
This proof is similar in flavor to the proof of \Cref{prop:maxUnion}. Moreover,
we consider that property 7 is a good example of the general arguments used for
proving the entire statement.

\begin{enumerate}
  \item Let $W$ be a maximum clique of $G$. If $s \in W \cap S$, then $W \cap S
    = \{s\}$, $W \cap K \subseteq \{f(s)\}$, and $W \cap R = \varnothing$, so in
    this case $W = \{s, f(s)\}$. Otherwise, $W \cap S = \varnothing$, and since
    the union of any clique of $H$ with $K$ is a clique, we have that $W$ is the
    union of $K$ with a maximum clique of $H$.

  \item Let $W$ be a maximum stable set in $G$. If $f(s) \in W \cap K$ for some
    $s\in S$, then $W \cap K = \{f(s)\}$, $s\notin W \cap S$, and $W \cap R =
    \varnothing$, so in this case $W = \{f(s)\} \cup (S \setminus \{s\})$.
    Otherwise, $W \cap K = \varnothing$, and since the union of any independent
    set in $H$ with $S$ is an independent set, we have that $W$ is the union of
    $S$ with a maximum independent set of $H$.

  \item Let $W$ be a set inducing a maximum bipartite subgraph of $G$. If $W
    \cap R$ is a nonempty independent set, then $\lvert W \cap K \rvert \le 1$.
    Furthermore, since the union of an independent subset of $R$ with $S \cup
    \{k\}$ induces a bipartite graph for any $k\in K$, in this case we have that
    $W$ is the union of $S \cup \{k\}$ with a maximum independent set of $H$. If
    $W \cap R$ induces a nonempty bipartite graph, then $W \cap K = \varnothing$
    and $W$ clearly is the union of $S$ with a maximum subset of $R$ inducing a
    bipartite graph. Otherwise, $W \cap R = \varnothing$. Since $W$ induces a
    bipartite graph and $K$ is a clique, we have that $\lvert W \cap K \rvert
    \le 2$. Moreover, since the union of $S$ with any 2-subset of $K$ induces a
    bipartite graph, in this case $W$ is the union of $S$ with a 2-subset of
    $K$.

  \item Let $W$ be a set inducing a maximum co-bipartite subgraph of $G$. If $W
    \cap R$ is a nonempty clique, $\lvert W \cap S \rvert \le 1$. Furthermore,
    since the union of a clique in $H$ with $K \cup \{s\}$ induces co-bipartite
    graph for any $s\in S$, in this case we have that $W$ is the union of $K
    \cup \{s\}$ with a maximum clique of $H$. If $W \cap R$ induces a
    co-bipartite graph which is not a clique, then $W \cap S = \varnothing$ and
    $W$ clearly is the union of $K$ with the vertex set of a maximum
    co-bipartite subgraph of $H$. Else, $W \cap R = \varnothing$. Since $W$
    induces a co-bipartite graph and $S$ is an independent set, we have that
    $\lvert W \cap S\rvert \le 2$. If $W \cap S = \{s_1, s_2\}$, then $W \cap K
    \subseteq \{f(s_1), f(s_2)\}$and it easily follows that $W = \{s_1, s_2,
    f(s_1), f(s_2)\}$. Notice that $W \cap S \ne \varnothing$, because otherwise
    $W \subseteq K$, but $K \cup \{s\}$ induces a co-bipartite graph for any $s
    \in S$, contradicting the election of $W$. Thus, $W \cap S = \{s_1\}$. In
    this case $R = \varnothing$ or, for any $r \in R$, $W \cup \{r\}$ would be a
    subset of $V_G$ inducing a co-bipartite graph, which is impossible by the
    election of $W$. Since $K \cup \{s_1\}$ induces a co-bipartite graph and $R
    = \varnothing$, it follows that $W$ is the union of a maximum clique of $R$
    (which is the empty set) with $K \cup \{s_1\}$.

  \item For any subset $W'$ of $R$ inducing a graph with split partition $(A,
    B)$, the graph $G[S \cup K \cup W']$ has $(A \cup S, K \cup B)$ as a split
    partition. Thus, if $W$ is a set inducing a maximum split subgraph of $G$,
    $W \cap R$ is a maximum split subgraph of $H$, $W \setminus R = S \cup K$,
    and the result follows.

  \item Let $W$ be a set inducing a maximum cluster of $G$.  First, assume that
    $W \cap R = \varnothing$. Since $S \cup \{k\}$ induces a cluster of $G$ for
    any $k \in K$, we have that $\lvert W \rvert \ge \lvert S \rvert +1$, so
    $\{s, f(s)\} \subseteq W$ for some $s\in S$. Moreover, since clusters are
    $P_3$-free graphs, if $\{s_1, f(s_1)\} \subseteq W$, $W \cap K =
    \{f(s_1)\}$. Thus, in this case $W = S \cup \{k\}$ for some $k\in K$.
    Otherwise, if $W \cap R \ne \varnothing$, $W \cap R$ induces a cluster and
    $\{s, f(s)\} \not \subseteq W$ for every $s \in S$, so $\lvert W \setminus R
    \rvert \le \lvert S \rvert$. It follows that, if $W \cap R$ is a clique,
    then $W \setminus R$ is an $\lvert S \rvert$-subset of $K \cup S$ such that
    $\{s, f(s)\} \not\subseteq W\setminus R$ for any $s \in S$, and $W \cap R$
    is a maximum clique of $H$. Otherwise, if $W \cap R$ has at least two
    connected components, then $W \cap K = \varnothing$, $W \setminus R = S$,
    and $W \cap R$ induces a maximum cluster in $H$.

  \item Let $W$ be a set inducing a maximum complete multipartite subgraph of
    $G$. Notice that, for any subset $R'$ of $R$ inducing a complete
    multipartite graph, $G[K \cup R']$ is a complete multipartite graph. In
    consequence, if $W \cap S = \varnothing$, then $W$ is the union of $K$ with
    a maximum subset of $R$ inducing a complete multipartite graph. Also observe
    that, since complete multipartite graphs are $\overline{P_3}$-free graphs,
    either $W \cap S = \varnothing$ or $W \cap R$ is an independent set.

    If $\lvert W \cap K \rvert \ge 3$, then $W \cap S = \varnothing$, so we are
    done. Now, suppose that $W \cap K = \{f(s_1), f(s_2)\}$ for some $s_1, s_2
    \in S$. Observe that in this case $W \cap S$ must be contained in either
    $\{s_1\}$ or $\{s_2\}$. In addition, some of $W \cap S$ or $W \cap R$ must
    be an empty set. As in the former case, if $W \cap S = \varnothing$, $W$ is
    the union of $K$ with a maximum subset of $R$ inducing a complete
    multipartite graph. Otherwise, if $W \cap R = \varnothing$, thus $W = \{s_1,
    f(s_1), f(s_2)\}$ for some $s_1, s_2 \in S$.
    
    Now, suppose that $W \cap K = \{f(s_1)\}$ for some $s_1 \in S$. Notice that
    either $s_1 \notin W$ or $W \cap S \subseteq \{s_1\}$. Also, $W \cap S \ne
    \varnothing$, otherwise $K$ would be a subset of $W$, but $\lvert K \rvert
    \ge 2$ and we are assuming $\lvert W \cap K \rvert = 1$. Thus, if $W \cap S
    \subseteq \{s_1\}$, then $W \cap S = \{s_1\}$ and $W \cap R$ is a maximum
    independent subset of $R$. Else, if $W \cap S \not \subseteq \{s_1\}$, then
    $s_1\notin W$ and there is a vertex $s_2 \in W \cap (S \setminus \{s_1\})$.
    Hence, $W \cap R = \varnothing$ and $W \cap S = S \setminus \{s_1\}$.

    Finally, if $W \cap K = \varnothing$, then $W \cap S \neq \varnothing$, and
    $W$ is the union of $S$ with a maximum independent subset of $R$.

  \item Let $W$ be a set inducing a maximum monopolar subgraph of $G$, and let
    $W' = W \cap R$. If $W'$ induces a graph with split partition $(A,B)$, then
    $G[S \cup K \cup W']$ is a graph with monopolar partition $(A \cup S, B \cup
    K)$. Thus, if $W'$ induces a split graph, $W$ is the union of $S \cup K$
    with a maximum subset of $R$ inducing a split graph.

    Otherwise, if $W'$ induces a cluster which is not a split graph, then $W'$
    has a subset inducing a $2K_2$; from here, since $K_2 \oplus 2K_2$ is not a
    monopolar graph, we have that $\lvert W \cap K \rvert \le 1$, and it follows
    that $W = W' \cup S \cup \{k\}$ for some $k \in K$.

    Finally, if $W'$ induces a monopolar graph which is neither a cluster or a
    split graph, then any monopolar partition $(A, B)$ of $G[W']$ is such that
    $A \ne \varnothing$ and $B$ has at least one pair of nonadjacent vertices;
    it follows that $W \cap K = \varnothing$, so $W$ is the union of $S$ with a
    maximum monopolar subgraph of $H$.

  \item Let $W$ be a set inducing a maximum co-monopolar subgraph of $G$, and
    let $W' = W \cap R$. If $W'$ induces a graph with split partition $(A,B)$,
    then $G[S \cup K \cup W']$ is a graph with co-monopolar partition $(B \cup
    K, A \cup S)$. Thus, if $W'$ induces a split graph, $W$ is the union of $S
    \cup K$ with a maximum subset of $R$ inducing a split graph.

    Otherwise, if $W'$ induces a complete multipartite graph which is not a
    split graph, then $W'$ has a subset inducing a $C_4$; from here, since $2K_1
    + C_4$ is not a co-monopolar graph, we have that $\lvert W \cap S \rvert \le
    1$, and it follows that $W = W' \cup K \cup \{s\}$ for some $s \in S$.

    Finally, if $W'$ induces a co-monopolar graph which is neither a complete
    multipartite graph or a split graph, then any monopolar partition $(A, B)$
    of $\overline{G[W']}$ is such that $A \ne \varnothing$ and $B$ has at least
    one pair adjacent vertices; it follows that $W \cap S = \varnothing$, so $W$
    is the union of $K$ with a maximum co-monopolar subgraph of $H$.

  \item Let $W$ be a set inducing a maximum polar subgraph of $G$. Notice that
    the union of $S \cup K$ with any subset of $R$ inducing a graph with polar
    partition $(A, B)$, is a graph with polar partition $(A \cup S, B \cup K)$.
    Hence, $W$ is the union of $S \cup K$ with a maximum polar subgraph of $H$.

  \item For any subset $R'$ of $R$ inducing a graph with unipolar partition $(A,
    B)$, the graph $G[S \cup K \cup R']$ has unipolar partition $(A \cup K, B
    \cup S)$. Thus, if $W$ is a set inducing a maximum unipolar subgraph of $G$,
    $W = S\cup K \cup R'$, for some subset $R'$ of $R$ inducing a maximum
    unipolar graph.

  \item For any subset $R'$ of $R$ inducing a graph with co-unipolar partition
    $(A, B)$, the graph $G[S \cup K \cup R']$ has co-unipolar partition $(A \cup
    S, B \cup K)$. Thus, if $W$ is a set inducing a maximum co-unipolar subgraph
    of $G$, $W = S\cup K \cup R'$, for some subset $R'$ of $R$ inducing a
    maximum co-unipolar graph.
\end{enumerate}
\end{proof}

In the following propositions we strongly use the fact that a thin spider is the
complement of a thick spider and vice versa. Notice that by a simple
complementary argument, analogous results can be given for computing
$\mathsf{MI}(G_x)$, $\mathsf{McB}(G_x)$, $\mathsf{MJI}(G_x)$,
$\mathsf{McM}(G_x)$, and $\mathsf{McU}(G_x)$.

\begin{proposition}
\label{prop:sparseGxLinear}
Let $G$ be a $P_4$-sparse graph, and let $T$ be its ps-tree. For any node $x$ of
$T$ the following assertions hold true.
\begin{multicols}{2}
  \begin{enumerate}
    \item $\mathsf{MC}(G_x)$ can be found in linear time.
    \item $\mathsf{MB}(G_x)$ can be found in linear time.
    \item $\mathsf{MS}(G_x)$ can be found in linear time.
    \item $\mathsf{MUC}(G_x)$ can be found in linear time.
    \item $\mathsf{MM}(G_x)$ can be found in linear time.
    \item $\mathsf{MP}(G_x)$ can be found in linear time.
    \item $\mathsf{MU}(G_x)$ can be found in linear time.
  \end{enumerate}
\end{multicols}
\end{proposition}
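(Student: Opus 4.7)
The plan is to establish all seven claims simultaneously by a bottom-up dynamic programme on the ps-tree $T$ of $G$. At each node $x$, the algorithm will maintain not only the sizes of the seven distinguished maximum subgraphs of $G_x$ listed in the statement, but also the sizes of their five complementary counterparts $\mathsf{MI}(G_x)$, $\mathsf{McB}(G_x)$, $\mathsf{MJI}(G_x)$, $\mathsf{McM}(G_x)$, and $\mathsf{McU}(G_x)$, since the recursive characterizations of \Cref{prop:maxUnion,prop:maxJoin,prop:maxThinSp} freely mix the twelve quantities. For each size, the algorithm also stores a pointer-based description of a witness (pointers to witnesses in the children that are combined, plus any auxiliary vertex chosen at the current node); from these, an explicit witness at the root is recovered in a single top-down pass.

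The induction on $T$ is organised according to the label of the node. At a leaf, whose subgraph is $K_1$, the twelve values are immediate. If $x$ is a $0$-node, then $G_x$ is the disjoint union of the subgraphs associated with the children of $x$, and we apply \Cref{prop:maxUnion}, generalised to arbitrary arity (or, equivalently, binarising the star of children into a chain of $0$-nodes). If $x$ is a $1$-node, then $G_x$ is the join of its children, and we apply \Cref{prop:maxJoin} analogously. If $x$ is a $2$-node, then $G_x$ is a spider; the marks maintained while constructing $T$ identify the legs set $S$ and the body $K$ in $O(1)$ time, and the explicit structural description of the ps-tree of a spider given before the proposition locates a descendant $y$ of $x$ such that $G_y \cong G[R]$. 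If the spider is thin, we apply \Cref{prop:maxThinSp} using the twelve values already computed at $y$; if the spider is thick, the same proposition is applied to the complementary spider $\overline{G_x}$ (which is thin), exploiting the identities $\mathsf{MC}(\overline{H}) = \mathsf{MI}(H)$, $\mathsf{MB}(\overline{H}) = \mathsf{McB}(H)$, $\mathsf{MS}(\overline{H}) = \mathsf{MS}(H)$, and so on, so that no separate statement is needed.

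For the complexity analysis, the ps-tree of a $P_4$-sparse graph on $n$ vertices has $O(n)$ nodes and $O(n)$ parent-child relations in total. Every expression appearing in \Cref{prop:maxUnion,prop:maxJoin} can be evaluated at a node with $k$ children in $O(k)$ time: quantities such as $\mathsf{MI}$ or $\mathsf{MUC}$ of a disjoint union are sums over children; quantities involving a maximum, such as $\mathsf{MC}$ of a join, $\mathsf{MP}$ of a union, or $\mathsf{MU}$ of a join, reduce either to a single maximum over children, or to picking one distinguished child whose witness is combined with a sum or maximum over the remaining children, each computable by a constant number of passes. The spider formulas in \Cref{prop:maxThinSp} take $O(1)$ time at a $2$-node once the values at the head are known, because the maxima over $s \in S$, $k \in K$, or over the family $\mathcal{X}$ of item~6 depend only on $|S|$, $|K|$, and the values already computed at $y$; for example, every term $|\{s, f(s)\}|$ equals $2$, and $|\mathsf{MC}(H) \cup W'|$ is maximised by any $|S|$-subset $W'$ avoiding $f$-related pairs, for instance $W' = S$ itself. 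Summing over all nodes, the total running time is linear in $n$.

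The main obstacle is the $2$-node case. As illustrated in \Cref{fig:psTreeThin}, the head of a spider is not a direct child of the corresponding $2$-node, but sits two or three levels below it, and in certain degenerate situations (short thin spiders, or spiders whose head complement is disconnected) the ps-tree alone does not determine the spider partition. This is precisely the reason for the preliminary bookkeeping on marked body vertices described in the paragraphs preceding the proposition; that preprocessing is what allows $S$, $K$, and the head descendant $y$ to be recognised in constant time per $2$-node, which in turn is what makes the spider formulas applicable in $O(1)$ extra work beyond the recursion at $y$.
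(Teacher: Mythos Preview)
Your argument is essentially the paper's own proof, spelled out more carefully: a bottom-up pass on the ps-tree maintaining all twelve quantities, invoking \Cref{prop:maxUnion}, \Cref{prop:maxJoin}, and \Cref{prop:maxThinSp} at $0$-, $1$-, and $2$-nodes respectively, with thick spiders handled via complementation.

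One small technical correction: it is not always true that some descendant $y$ of a $2$-node satisfies $G_y \cong G[R]$. When $\overline{G[R]}$ is disconnected, the co-components of the head appear as separate siblings of the marked body vertices among the children of the $1$-node a few levels below (this is precisely the ambiguity discussed around \Cref{fig:psTreeP4} and \Cref{fig:psTreeThin}), and $G[R]$ itself is never a node of $T$. The fix is immediate---once the body vertices are marked, the remaining children at that level are exactly the co-components of $G[R]$, so one extra application of \Cref{prop:maxJoin} recovers the twelve values for $G[R]$ from theirs---and the paper's own proof glosses over this same point. Also, identifying $S$ and $K$ costs $O(|S|)$ rather than $O(1)$, but that is absorbed in the linear budget anyway.
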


\begin{proof}
  The proofs of all items are similar, and we suggest to read 4 as a reference
  of the general arguments used in the demonstration. We include the proof of
  all items for the sake of completeness.

  \begin{enumerate}
    \item The assertion is trivially satisfied if $x$ is a leaf of $T$. If $x$
        has type 0, we have by part 1 from \Cref{prop:maxUnion} that
        $\mathsf{MC}(G_x)$ is a set realizing $\max_i\{\mathsf{MC}(G_{c_ix})\}$.
        If $x$ has type 1, we have by part 1 from \Cref{prop:maxJoin} that
        $\mathsf{MC}(G_x) = \bigcup_i{\mathsf{MC}(G_{c_ix})}$. Finally, let us
        assume that $x$ has type 2, and let $(S, K, R)$ be the spider partition
        of $G_x$. If $G_x$ is a thin spider, we have from item 1 of
        \Cref{prop:maxThinSp} that $\mathsf{MC}(G_x)$ is a witness of \(
        \max_{s\in S} \{ \lvert \{s, f(s)\} \rvert, \lvert K \cup
        \mathsf{MC}(G[R]) \rvert \}, \) where $f(s)$ is the only neighbor of $s$
        in $K$ for each $s\in S$. Otherwise, if $G_x$ is a thick spider, we have
        from item 2 of \Cref{prop:maxThinSp} that $\mathsf{MC}(G_x)$ is a
        witness of \( \max_{s \in S} \{ \lvert \{s\} \cup (K \setminus \{f(s)\})
        \rvert, \lvert K \cup \mathsf{MC}(G[R]) \rvert \}, \) where,  for each
        $s\in S$, $f(s)$ is the only vertex in $K$ which is not a neighbor of
        $s$. The result follows since $G_x$ has $O(n)$ descendants.

    \item The assertion is trivially satisfied if $x$ is a leaf of $T$. If $x$
        has type 0, we have by part 3 from \Cref{prop:maxUnion} that
        $\mathsf{MB}(G_x) = \bigcup_{i} \mathsf{MB}(G_{c_ix})$. If $x$ has type
        1, we have by part 3 from \Cref{prop:maxJoin} that $\mathsf{MB}(G_x)$ is
        a set realizing $\max_{i,j} \{ \mathsf{MB}(G_{c_ix}),
        \mathsf{MI}(G_{c_ix}) \cup \mathsf{MI}(G_{c_jx}) \}$. Finally, let us
        assume that $x$ has type 2, and let $(S, K, R)$ be the spider partition
        of $G_x$. If $G_x$ is a thin spider, we have from item 3 of
        \Cref{prop:maxThinSp} that $\mathsf{MB}(G_x)$ is a witness of \(
        \max_{k_1,k_2 \in K} \{ \lvert S \cup \{k_1, k_2\} \rvert, \lvert
        \mathsf{MI}(H) \cup S \cup \{k_1\} \rvert, \lvert \mathsf{MB}(G[R]) \cup
        S \rvert \}\). Otherwise, if $G_x$ is a thick spider, we have from item
        4 of \Cref{prop:maxThinSp} that $\mathsf{MB}(G_x)$ is a witness of \(
        \max_{s_1,s_2\in S} \{ \lvert \{f(s_1), f(s_2), s_1, s_2\} \rvert,
        \lvert \mathsf{MI}(G[R]) \cup S \cup \{f(s_1)\} \rvert,$ $\lvert
        \mathsf{MB}(G[R]) \cup S \rvert \}, \) where $f$ is the bijection from
        $S$ to $K$ such that $N(s) = K\setminus\{f(s)\}$ for each $s\in S$. The
        result follows since $G_x$ has $O(n)$ descendants.

    \item The assertion is trivially satisfied if $x$ is a leaf of $T$.  If $x$
        has type 0, we have by part 5 from \Cref{prop:maxUnion} that
        $\mathsf{MS}(G_x)$ is a set realizing $\max_i \{MS(G_{c_ix}) \cup
        \bigcup_{j\ne i} MI(G_{c_jx})\}$. If $x$ has type 1, we have by part 5
        from \Cref{prop:maxJoin} that $\mathsf{MS}(G_x)$ is a set realizing
        $\max_i \{MS(G_{c_ix}) \cup \bigcup_{j\ne i} MC(G_{c_jx})\}$. If $x$ has
        type 2, we have from item 5 of \Cref{prop:maxThinSp} that
        $\mathsf{MS}(G_x)$ is the union of a maximum subset of $R$ inducing a
        split graph with $S\cup K$.  The result follows since $G_x$ has $O(n)$
        descendants.

    \item   The assertion is trivially satisfied if $x$ is a leaf of $T$. If $x$
        has type 0, we have by part 6 of \Cref{prop:maxUnion} that
        $\mathsf{MUC}(G_x)$ is a set realizing $\bigcup_i
        \mathsf{MUC}(G_{c_ix})$. If $x$ has type 1, we have by part 6 of
        \Cref{prop:maxJoin} that $\mathsf{MUC}(G_x)$ is a set realizing $\max_i
        \{\mathsf{MC}(G_x), \mathsf{MUC}(G_{c_ix})\}$. Finally, let us assume
        that $x$ has type 2, and let $(S,K,R)$ be the spider partition of $G_x$.
        If $G_x$ is a thin spider, we have from item 6 of \Cref{prop:maxThinSp}
        that $\mathsf{MUC}(G_x)$ is a witness of \[ \max_{\substack{k \in K \\ X
        \in \mathcal{X}}} \{ \lvert S \cup \{k\} \rvert, \lvert S \cup
        \mathsf{MUC}(G[R]) \rvert, \lvert \mathsf{MC}(G[R]) \cup X \rvert \}, \]
        where $\mathcal{X}$ is the family of all $\lvert S \rvert$-subsets $X$
        of $S \cup K$ such that $\{s, f(s)\} \not \subseteq X$ for any $s \in
        S$, being $f$ as usual. Notice that, when computing the maximum
        described above, we do not need to check each member of $\mathcal{X}$,
        because all of them have the same number of vertices. In addition, if
        such maximum is attained by $\lvert \mathsf{MC}(G[R]) \cup X \rvert$ for
        an $X \in \mathcal{X}$, then any $X \in \mathcal{X}$ can be used to
        construct a maximum cluster of $G_x$, particularly, $\lvert
        \mathsf{MC}(G[R]) \cup K \rvert$ is a maximum cluster of $G_x$.
        
        Finally, if $G_x$ is a thick spider, we have from item 7 of
        \Cref{prop:maxThinSp} that $\mathsf{MUC}(G_x)$ is a witness of
        \[
            \max_{s_1, s_2 \in S} \{ \lvert \{f(s_1), s_1, s_2\} \rvert,
            \lvert \{ s_1 \} \cup (K \setminus \{ f(s_1) \}) \rvert,
            \lvert \{ s_1, f(s_1) \} \cup \mathsf{MC}(G[R]) \rvert,
        \]
        \[
            \lvert K \cup \mathsf{MC}(G[R]) \rvert,
            \lvert S \cup \mathsf{MUC}(G[R]) \rvert \},
        \]
        where $f$ is the bijection from $S$ to $K$ such that $N(s) =
        K\setminus\{f(s)\}$ for each $s\in S$. The result follows since $G_x$
        has $O(n)$ descendants.

    \item   The assertion is trivially satisfied if $x$ is a leaf of $T$. If $x$
        has type 0, we have by part 8 of \Cref{prop:maxUnion} that
        $\mathsf{MM}(G_x)$ is a set realizing $\bigcup_i \mathsf{MM}(G_{c_ix})$.
        If $x$ has type 1, we have by part 8 from \Cref{prop:maxJoin} that
        $\mathsf{MM}(G_x)$ is a set realizing
        $\max_{i,j}\{\mathsf{MM}(G_{c_ix}), \mathsf{MS}(G_{c_ix}) \cup
        \bigcup_{j \ne i}\mathsf{MC}(G_{c_jx}),$ $\mathsf{MI}(G_{c_ix}) \cup
        \bigcup_{j \ne i}\mathsf{MUC}(G_{c_jx})\}$. Finally, let us assume that
        $x$ has type 2, and let $(S,K,R)$ be the spider partition of $G_x$. No
        matter if $G_x$ is a thin or a thick spider, we have from items 8 and 9
        of \Cref{prop:maxThinSp} that $\mathsf{MM}(G_x)$ is a witness of
        \[
            \max_{k \in K} \{ \lvert S \cup K \cup \mathsf{MS}(G[R]) \rvert,
            \lvert S \cup \{k\} \cup \mathsf{MUC}(G[R]) \rvert,
            \lvert S \cup \mathsf{MM}(G[R]) \rvert \}.
        \]
        The result follows since $G_x$ has $O(n)$ descendants.

    \item The assertion is trivially satisfied if $x$ is a leaf of $T$. If $x$
        has type 0, we have by part 10 from \Cref{prop:maxUnion} that
        $\mathsf{MP}(G_x)$ is a set realizing $\max_i \{\mathsf{MM}(G_x),
        \mathsf{MP}(G_{c_ix}) \cup \bigcup_{j\ne i}\mathsf{MUC}(G_{c_jx})\}$. If
        $x$ has type 1, we have by part 10 from \Cref{prop:maxJoin} that
        $\mathsf{MP}(G_x)$ is a set realizing $\max_i \{\mathsf{McM}(G_x),$
        $\mathsf{MP}(G_{c_ix}) \cup \bigcup_{j\ne i}\mathsf{MJI}(G_{c_jx})\}$.
        Finally, let us assume that $x$ has type 2, and let $(S,K,R)$ be the
        spider partition of $G_x$. No matter if $G_x$ is a thin or a thick
        spider, we have from item 10 of \Cref{prop:maxThinSp} that
        $\mathsf{MP}(G_x)$ is the union of $S \cup K$ with a maximum subset of
        $R$ inducing a polar graph.  The result follows since $G_x$ has $O(n)$
        descendants.

    \item The assertion is trivially satisfied if $x$ is a leaf of $T$. If $x$
        has type 0, we have by part 11 from \Cref{prop:maxUnion} that
        $\mathsf{MU}(G_x)$ is a set realizing $\max_i \{\mathsf{MU}(G_{c_ix})
        \cup \bigcup_{j\ne i}\mathsf{MUC}(G_{c_jx})\}$. If $x$ has type 1, we
        have by part 11 from \Cref{prop:maxJoin} that $\mathsf{MU}(G_x)$ is a
        set realizing $\max_i \{\mathsf{McB}(G_x), \mathsf{MU}(G_{c_ix}) \cup
        \bigcup_{j\ne i}\mathsf{MC}(G_{c_jx})\}$. Finally, let us assume that
        $x$ has type 2, and let $(S,K,R)$ be the spider partition of $G_x$. No
        matter if $G_x$ is a thin or a thick spider, we have from items 11 and
        12 of \Cref{prop:maxThinSp} that $\mathsf{MU}(G_x)$ is the union of $S
        \cup K$ with a maximum subset of $R$ inducing a unipolar graph.  The
        result follows since $G_x$ has $O(n)$ descendants.The result follows
        since $G_x$ has $O(n)$ descendants.
  \end{enumerate}
\end{proof}

We obtain the main result of this section as a direct consequence of the
proposition above.

\begin{theorem}
\label{thm:linearSparse}
    For any $P_4$-sparse graph $G$, maximum order subgraphs of $G$ with the
    properties of being monopolar, unipolar, or polar, can be found in linear
    time. In consequence, the problems of deciding whether a $P_4$-sparse graph
    is either a monopolar graph, a unipolar graph, or a polar graph are
    linear-time solvable.
\end{theorem}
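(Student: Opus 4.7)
The plan is to read off the three maximum subgraphs from a single bottom-up pass on the ps-tree of $G$, using the recurrences already compiled in \Cref{prop:maxUnion,prop:maxJoin,prop:maxThinSp} and packaged in \Cref{prop:sparseGxLinear}. Concretely, I would first invoke the Jamison--Olariu algorithm to build the ps-tree $T$ of $G$ in linear time, together with the list of children of each node and the spider-partition marks for each $2$-node, exactly as described earlier in this section; the resulting data structure has size $O(n)$.

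Once $T$ is in hand, items 5, 6 and 7 of \Cref{prop:sparseGxLinear} applied to the root $r$ of $T$ yield witnesses of $\mathsf{MM}(G)$, $\mathsf{MP}(G)$ and $\mathsf{MU}(G)$, since $G_r = G$. For the decision versions, the three questions reduce to testing whether the cardinality of the computed witness equals $|V_G|$: the graph is monopolar (resp.\ polar, unipolar) iff $|\mathsf{MM}(G)|=|V_G|$ (resp.\ $|\mathsf{MP}(G)|=|V_G|$, $|\mathsf{MU}(G)|=|V_G|$). This comparison is $O(1)$ after the linear-time computation, so the three decision problems inherit the linear running time.

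The subtlety to watch for—and the only point where the argument could silently blow up—is that the recurrences for $\mathsf{MM}$, $\mathsf{MP}$ and $\mathsf{MU}$ in \Cref{prop:maxUnion,prop:maxJoin,prop:maxThinSp} reference the auxiliary quantities $\mathsf{MC}$, $\mathsf{MI}$, $\mathsf{MB}$, $\mathsf{McB}$, $\mathsf{MS}$, $\mathsf{MUC}$, $\mathsf{MJI}$, $\mathsf{McM}$, $\mathsf{McU}$ at the children of the current node. To keep the total cost linear I would run a single bottom-up sweep in which every one of these twelve quantities is updated at each node simultaneously; by \Cref{prop:sparseGxLinear} and its complementary counterparts (obtained by the fact that $H$ is a thin spider iff $\overline{H}$ is a thick spider, and analogously for $0$- and $1$-nodes), the combining rule at a node $x$ with children $c_1x,\dots,c_tx$ does only $O(t)$ work, plus $O(|K|)$ work at a $2$-node to handle the terms of \Cref{prop:maxThinSp} indexed by elements of the body or legs. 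Since $\sum_x t_x = O(n)$ and the total size of the spider partitions attached to $2$-nodes is also $O(n)$, the overall running time is $O(n)$, completing the proof.
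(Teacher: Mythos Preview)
Your proposal is correct and follows essentially the same approach as the paper: the paper's proof simply invokes \Cref{prop:sparseGxLinear} at the root of the ps-tree and observes that $G_r = G$, which is exactly what you do. Your additional discussion of the single bottom-up sweep, the $O(t)$ combining cost per node, and the handling of the auxiliary quantities is sound supplementary detail that the paper leaves implicit in the proof of \Cref{prop:sparseGxLinear} itself.
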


\begin{proof}
	From \Cref{prop:sparseGxLinear}, $\mathsf{MM}(G_x), \mathsf{MU}(G_x)$ and
	$\mathsf{MP}(G_x)$ can be found in linear time for any node $x$ of the
	ps-tree associated to a $P_4$-sparse graph. Particularly, it can be done for
	the root of the ps-tree, so the result follows.
\end{proof}


\subsection{Largest polar subgraph in \texorpdfstring{$P_4$}{P4}-extendible
graphs}

Based on \Cref{thm:connChar}, it is possible to represent each $P_4$-extendible
graph $G$ by means of a labeled tree $T$ with root $G$, which can be constructed
in the following way. Let $H$ be a node of $T$. If $H$ is a trivial graph, it is
an unlabeled node of $T$ with no children. If $H$ is a disconnected graph, it is
labeled 0 and its children are its connected components. If $\overline{H}$ is
disconnected, then $H$ is labeled 1 and its children are the components of
$\overline{H}$. If $H$ is an extension graph, it is a node labeled 2 with as
many children as the order of $H$ which has additional information encoding the
graph induced by its children. Finally, if $H$ is an $X$-spider with nonempty
head whose spider partition is $(S, K, R)$, $H$ is a node labeled 3 and has
exactly two children: its left child, $H[S\cup K]$, and its right child, $H[R]$.
We will call the tree constructed in this way the \textit{parse tree} of $G$.
Hochst\"attler and Schindler \cite{hochstattler1995} showed that the problems of
recognizing $P_4$-extendible graphs and computing the parse tree of a
$P_4$-extendible graph can be solved in linear time.\footnote{Actually, the
parse tree defined in \cite{hochstattler1995} is slightly different than the one
we introduce, due to the fact that they assume by convention that the father of
a node labeled 2 is always a node labeled 3, that the root is always a node
labeled 1, and that nodes labeled 1 and 3 may have only one child. Nevertheless,
with some minor changes, the algorithm in \cite{hochstattler1995} can be adapted
to construct our version of the parse tree.}   From the results by
Hochst\"attler and Schindler in \cite{hochstattler1995}, it follows that the
parse tree of a $P_4$-extendible graph of order $n$ has $O(n)$ nodes. It implies
that it takes linear time to compute the lists of children for all nodes of the
parse tree. Since such lists can be considered additional information for each
node, this preserves the condition that the parse tree uses only linear space

It is really easy to provide characterizations for maximal substructures
associated to polarity in extension graphs but, for the sake of brevity, we only
notice that, since there is a finite number of extension graphs, it is possible
to compute the aforementioned maximal substructures in constant time.

We continue with propositions characterizing maximal substructures associated to
polarity in both $P$-spiders and $F$-spiders.  As the reader can notice, the
proofs are very similar in nature to those of \Cref{prop:maxThinSp}.  Notice
that $P_4$-spiders are special cases of thin (and thick) spiders, so a Lemma
analogous to \Cref{lem:Fspider,lem:ovPspider}, for $P_4$-spiders, is simply a
particular case of \Cref{prop:maxThinSp}.

\begin{lemma}
\label{lem:ovPspider}
Let $G = (S, K, R)$ be a $\overline P$-spider, where $S=\{a,a',d\}$, $K=\{b,c\}$
and $\{a,a',b\}$ induces $C_3$. Let $W$ be a subset of $V_G$, and let $H =
G[R]$. The following statements hold.
\begin{enumerate}

	\item $W$ is a maximum clique of $G$ if and only it is a witness of $\max \{
	 \lvert \{a, a', b\} \rvert, \lvert \mathsf{MC}(H) \cup K \rvert \}$.

	\item $W$ is a maximum independent set of $G$ if and only if it is a witness
	of
  \[
    \max \{ \lvert \{a, c\} \rvert,
    \lvert \{a, d\} \rvert,
    \lvert \{a', c\} \rvert,
    \lvert \{a', d\} \rvert,
    \lvert \{b, d\} \rvert,
    \lvert \mathsf{MI}(H) \cup \{a, d\} \rvert,
    \lvert \mathsf{MI}(H) \cup \{a', d\} \rvert \}.
  \]

  \item $W$ induces a maximum bipartite graph if and only if $W$ is a witness of
  \[
				\max \{ \lvert (S \cup K) \setminus \{a\} \rvert, \lvert (S \cup
				K) \setminus \{a' \} \rvert, \lvert \mathsf{MI}(H) \cup S \cup
                (K \setminus \{b\}) \rvert, \lvert \mathsf{MB}(H) \cup S \rvert
                \}.
	\]

  \item $W$ induces a maximum co-bipartite graph if and only if $W$ is a witness
  of \[
    \max \{ \lvert S \cup K \rvert,
    \lvert \mathsf{MC}(H) \cup S \cup (K \setminus\{d\}) \rvert,
    \lvert \mathsf{McB}(H) \cup \{c,b\} \rvert \}.
  \]

  \item $W$ induces a maximum split graph in $G$ if and only it is a witness of
  \begin{multline*}
    \max \{ \lvert \{a,a',b,d\} \rvert,
    \lvert \{a,a',b,c\} \rvert,
    \lvert \{a',b,c,d\} \rvert,
    \lvert \{a,b,c,d\} \rvert,
    \lvert \mathsf{MI}(H) \cup \{a,a',b,d\} \rvert, \\
    \lvert \mathsf{MS}(H) \cup \{a',b,c,d\} \rvert,
    \lvert \mathsf{MS}(H) \cup \{a,b,c,d\} \rvert \}.
  \end{multline*}
  
	\item $W$ induces a maximum cluster in $G$ if and only if it is a witness of
	\[
    \max \{ \lvert \{a,a',b,d\} \rvert,
    \lvert \{a,a',c,d\} \rvert,
    \lvert \mathsf{MC}(H) \cup \{a, a', c\} \rvert,
    \lvert \mathsf{MC}(H) \cup S \rvert,
    \lvert \mathsf{MUC}(H) \cup S \rvert \}.
  \]

	\item $W$ induces a maximum complete multipartite graph in $G$ if and only
	if it is a witness of
  \begin{multline*}
    \max \{ \lvert \{a,a',b\} \rvert,
    \lvert \{a,b,c\} \rvert,
    \lvert \{a',b,c\} \rvert,
    \lvert \{b,c,d\} \rvert,
    \lvert \mathsf{MI}(H) \cup \{a,b\} \rvert,
    \lvert \mathsf{MI}(H) \cup \{a',b\} \rvert, \\
    \lvert \mathsf{MI}(H) \cup \{c,d\} \rvert,
    \lvert \mathsf{MI}(H) \cup \{a,d\} \rvert,
    \lvert \mathsf{MI}(H) \cup \{a',d\} \rvert,
    \lvert \mathsf{MJI}(H) \cup K \rvert \}.
  \end{multline*}

	\item $W$ induces a maximum monopolar graph in $G$ if and only if it is a
  witness of
  \begin{multline*}
    \max \{ \lvert \mathsf{MC}(H) \cup S\cup K \rvert,
    \lvert \mathsf{MUC}(H) \cup \{a,a',c,d\} \rvert,
    \lvert \mathsf{MUC}(H) \cup \{a,a',b,d\} \rvert, \\
    \lvert \mathsf{MS}(H) \cup \{a,b,c,d\} \rvert,
    \lvert \mathsf{MS}(H) \cup \{a',b,c,d\} \rvert,
    \lvert \mathsf{MS}(H) \cup \{a,a',c,d\} \rvert,
    \lvert \mathsf{MM}(H) \cup S \rvert 
    \}.
  \end{multline*}

	\item $W$ induces a maximum co-monopolar graph in $G$ if and only if it is a
  witness of
  \[
    \max \{ \lvert \mathsf{MI}(H) \cup S\cup K \rvert,
    \lvert \mathsf{MS}(H) \cup \{a,b,c,d\} \rvert,
    \lvert \mathsf{MS}(H) \cup \{a',b,c,d\} \rvert,
  \]
  \[
    \lvert \mathsf{MJI}(H) \cup \{a,a',b,c\} \rvert,
    \lvert \mathsf{McM}(H) \cup K \rvert \}.
  \]

	\item $W$ induces a maximum polar graph in $G$ if and only if $W$ is the
	union of a maximum subset of $R$ inducing a polar graph with $S \cup K$.

	\item $W$ induces a maximum unipolar graph in $G$ if and only if $W$ is the
	union of a maximum subset of $R$ inducing a unipolar graph with $S \cup K$.

  \item $W$ induces a maximum co-unipolar graph in $G$ if and only if and only
  if $W$ is a witness of
    \[
      \max \{ \lvert \mathsf{MI}(H) \cup S \cup K \rvert, \lvert
      \mathsf{MB}(H) \cup S \cup \{b\} \rvert, \lvert \mathsf{McU}(H)
      \cup K \cup \{a, d\} \rvert, \lvert \mathsf{McU}(H) \cup K \cup
      \{a', d\} \rvert \}.
    \]

\end{enumerate}
\end{lemma}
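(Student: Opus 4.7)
The plan is to first make the adjacency structure of $G$ fully explicit: beyond the triangle $\{a,a',b\}$ provided by the hypothesis, the remaining edges of $\overline{P}$ on $S\cup K$ are exactly $bc$ and $cd$, so $b$ is adjacent to $c$ but not to $d$, while $c$ is adjacent to $b$ and $d$ but to neither $a$ nor $a'$. Together with the $\overline{P}$-spider axiom that $R$ is completely adjacent to $K=\{b,c\}$ and completely nonadjacent to $S=\{a,a',d\}$, this determines $G$ up to the choice of $H=G[R]$. Each of the twelve items is then established by structural case analysis, writing $W=W_S\cup W_K\cup W_R$ with $W_X=W\cap X$ and enumerating the compatible configurations of $W_R$ for every admissible choice of $W_S\cup W_K$.

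Items 10 and 11 are the cleanest, and I handle them first. Observe that $(K,S)$ is a polar partition of $\overline{P}$: $G[K]$ is $K_2$ and $G[S]=K_2+K_1$ is $P_3$-free, hence a cluster. For any subset $W_R'\subseteq R$ with polar partition $(A_R',B_R')$ of $H[W_R']$, the pair $(K\cup A_R',S\cup B_R')$ is a polar partition of $G[S\cup K\cup W_R']$, since $R$ is completely adjacent to $K$ and nonadjacent to $S$; the same works for unipolar partitions, because $K$ is a clique. Heredity of (uni)polarity on $R$ then gives $|W\cap R|\le|\mathsf{MP}(H)|$ (respectively $|\mathsf{MU}(H)|$), and combined with $|W\setminus R|\le 5$ this shows that $W=S\cup K\cup\mathsf{MP}(H)$ (respectively $S\cup K\cup\mathsf{MU}(H)$) attains the maximum. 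Items 1, 2, 6, 7, 9 and 12 are handled by the same template: classify the cliques, independent sets, clusters, complete-multipartite subgraphs, co-monopolar and co-unipolar subgraphs of $G$ according to what they induce on $S\cup K$, use the small size of $\overline{P}$ to enumerate the admissible patterns, and then determine, for each pattern, the maximum substructure of $H$ that can be attached to it.

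The main obstacle is items 3, 4, 5 and 8 (maximum bipartite, co-bipartite, split and monopolar subgraphs), where the partition of $G[W]$ interacts non-trivially with the adjacencies between $R$ and $S\cup K$. For these the plan is to fix a putative 2-colouring, two-clique cover, split partition, or monopolar partition $(A,B)$ of $G[W]$ and to use that $R$ is completely adjacent to $K$ and nonadjacent to $S$ to force each of $b$ and $c$ onto a specific side of $(A,B)$, depending on which of the $\overline{P}$-edges $aa'$, $bc$, $cd$ are active in $W$. This is where the asymmetry between $b$ (lying in the triangle $\{a,a',b\}$) and $c$ (lying in the edge $cd$) produces the several distinct witnesses listed in each item: the removal of $a$ versus $a'$ in item 3, the two $\mathsf{MS}(H)$-witnesses in item 5, and the analogous splits in items 4 and 8.

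The overall layout mirrors the proof of \Cref{prop:maxThinSp} for thin spiders: since the adjacencies of $\overline{P}$ are the only new ingredient, each of the twelve items is proved by the same two-step argument, first exhibiting the listed witnesses as subgraphs of the claimed type and then arguing maximality by ruling out any larger configuration through the adjacency-based case analysis. The verifications are mechanical but numerous, and will be organised to avoid repeating closely related arguments across items.
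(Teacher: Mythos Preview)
Your proposal is correct and follows essentially the same approach as the paper: a mechanical case analysis exploiting the explicit adjacencies of $\overline{P}$ together with the complete/non-adjacency of $R$ to $K$ and $S$, treating items 10 and 11 as the immediate base cases and handling the remaining items by enumerating admissible configurations. The only organisational difference is that the paper cases primarily on the structure of $W\cap R$ (empty, clique, independent, split, cluster, etc.) and then deduces the admissible $W\setminus R$, whereas you case on the pattern $W\cap(S\cup K)$ and then determine the largest compatible substructure of $H$; these are dual viewpoints leading to the same verifications.
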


\begin{proof}
Since the nature of the proofs is very similar, we consider that reading 8 is
enough to get an idea of the general arguments used throughout the entire proof.
Nonetheless, for the sake of completeness, we include the entire argument.

\begin{enumerate}
  \item Let $W$ be a maximum clique of $G$. If $W \cap R = \varnothing$, then $W
  = \{a, a', b\}$. Otherwise, if $W \cap R \ne \varnothing$, then $W \cap S =
  \varnothing$ and $W$ is the union of $K$ with a maximum clique in $H$.

  \item Let $W$ be a maximum independent set of $G$. If $W \cap R =
  \varnothing$, then $W$ is a maximum independent subset of $S \cup K$, i.e., $W
  \in \{\{a, c\}, \{a, d\}, \{a', c\}, \{a', d\}, \{b, d\}\}$. Otherwise, if $W
  \cap R \ne \varnothing$, then $W \cap K = \varnothing$, and $W$ is the union
  of a maximum independent set in $H$ with a maximum independent subset of $S$.

  \item Let $W$ be a set inducing a maximum bipartite subgraph of $G$. Notice
  that, since $\{a,a',b\}$ induces a triangle, $\lvert W \cap \{a,a',b\} \rvert
  \le 2$. It follows from the previous observation that, if $W \cap R =
  \varnothing$, $W$ is some of $(S \cup K) \setminus \{a\}$, $(S \cup K)
  \setminus \{a' \}$, or $(S \cup K) \setminus \{b\}$. Else, if $W \cap R$ is a
  nonempty independent set, $\lvert W \cap K \rvert \le 1$. Moreover, it is a
  simple observation that the union of any independent subset of $R$ with $S
  \cup (K \setminus \{b\})$ induces a bipartite graph, but the union of an
  independent subset of $R$ with any other 4-subset of $S \cup K$ does not
  induce a bipartite graph. Thus, when $W \cap R$ is a nonempty independent set,
  $W$ is the union of a maximum independent subset of $R$ with $S \cup (K
  \setminus \{b\})$. Finally, if $W \cap R$ induces a nonempty bipartite graph,
  then $W \cap K = \varnothing$ and we trivially have that $W \setminus R = S$.

  \item Let $W$ be a set inducing a maximum co-bipartite subgraph of $G$. Since
  $\overline{P}$ admits a partition in two cliques, if $W \cap R = \varnothing$,
  $W = S \cup K$. Else, if $W \cap R$ induces a nonempty clique, neither
  $\{a,d\}$ or $\{a',d\}$ is a subset of $W$. Moreover, the union of a clique
  contained in $R$ with $(S \cup K) \setminus \{d\}$ induces a co-bipartite
  graph, and the union of a nonempty subset of $R$ with any other 4-subset of $S
  \cup K$ does not induce a co-bipartite graph. Thus, if $W \cap R$ induces a
  nonempty clique, $W$ is the union of a maximum clique in $H$ with $(S \cup K)
  \setminus \{d\}$. Otherwise, $W \cap R$ is a co-bipartite graph which is not a
  clique, and then $W \cap S = \varnothing$, so clearly $W \setminus R = K$; the
  result follows.

  \item Let $W$ be a set inducing a split subgraph of $G$. If $W \cap R =
  \varnothing$, $W$ is a maximum subset of $S \cup K$ inducing a split graph, so
  $W$ is one of $\{a,a',b,d\}$, $\{a,a',b,c\}$, $\{a',b,c,d\}$, or
  $\{a,b,c,d\}$. Now, assume that $W \cap R \ne \varnothing$. Notice that in
  this case $\{a,a',c\} \not\subseteq W$, otherwise $\{a,a',c,r\}$ would induce
  $2K_2$ for any $r\in W \cap R$. Thus, if $W \cap R$ is an independent set, $W
  \setminus R$ is any of $\{a,b,c,d\}, \{a',b,c,d\}$, or $\{a,a',b,d\}$. Else,
  if $W \cap R$ induces a split graph which is not empty, $\{a,a' \}$ could not
  be a subset of $W$, because $\{a,a',r,r' \}$ would induce $2K_2$ for any
  adjacent vertices $r,r' \in W \cap R$. Thus, if $W \cap R$ is not an
  independent set, $W \setminus R$ must be one of $\{a,b,c,d\}$, or
  $\{a',b,c,d\}$, and the result follows.

  \item Let $W$ be a set inducing a maximum cluster of $G$. If $W \cap R =
  \varnothing$, $W$ is a maximum subset of $S \cup K$ inducing a cluster, i.e.,
  $W \in \{ \{a,a',b,d\}, \{a,a',c,d\} \}$. Now, assume that $W \cap R \ne
  \varnothing$. If $W \cap R$ is a clique, then $W$ cannot have simultaneously
  $c$ and $d$, or $b$ and any of $a$ or $a'$. Thus, in this case $W$ is the
  union of a maximum subset of $R$ inducing a clique with one of $\{a, a', c\}$
  or $\{a, a', d\}$. Otherwise, if $W \cap R$ induces a cluster which is not a
  complete graph, then $W \cap K = \varnothing$, and $W$ is the union of $S$
  with a maximum subset of $R$ inducing a cluster.

  \item Let $W$ be a set inducing a maximum complete multipartite subgraph of
  $G$. If $W \cap R = \varnothing$, $W$ is a maximum subset of $S \cup K$
  inducing a complete multipartite graph, i.e., $W$ is one of $\{a,a',b\},
  \{a,b,c\}, \{a',b,c\}$, or $\{b,c,d\}$. Now, assume that $W \cap R \ne
  \varnothing$. Notice that in this case, $W \cap S$ is completely adjacent to
  $W \cap K$. In addition, $W$ cannot have both, $a$ and $a'$. It follows that,
  if $W \cap R$ is an independent set, then $W \setminus R$ is one of $\{a,b\},
  \{a',b\}, \{c,d\}, \{a,d\}, \{a',d\}$, or $K$. Otherwise, if $W \cap R$
  induces a maximum complete multipartite graph of $R$ which is not empty, $W
  \cap S = \varnothing$ and $W \setminus R = K$, so the result follows.

  \item Let $W$ be a set inducing a maximum monopolar subgraph of $G$. If $W
  \cap R$ is a clique, then $\{a,a',c\}\cup (W \cap R)$ induces a cluster and,
  since $\{b,d\}$ is an independent set, we have that $W \setminus R = S \cup
  K$, so $W$ is the union of a maximum clique of $H$ with $S \cup K$.

  When $W \cap R$ induces a noncomplete graph which is simultaneously a split
  graph and a cluster, since $W \cap R$ is not a clique, $\{a,a',b,c\}$ could
  not be a subset of $W$ or, for any nonadjacent vertices $r,r' \in W \cap R$,
  $\{a,a',b,c,r,r' \}$ would induce $K_1 \oplus (K_2+P_3)$, which is not a
  monopolar graph. Moreover, some simple verifications show that $W \setminus R$
  is any of $\{a,b,c,d\}, \{a',b,c,d\}, \{a,a',c,d\}$, or $\{a,a',b,d\}$.

  Else, if $W \cap R$ induces a cluster which is not a split graph, then it has
  a subset $U$ inducing $2K_2$, so $\{b,c\} \not\subseteq W$, or $G[\{b,c\} \cup
  U] \cong K_2 \oplus 2K_2$, which is not a monopolar graph. From here, it is
  easy to verify that $W \setminus R$ is any of $\{a,a',b,d\}$, or
  $\{a,a',c,d\}$.

  Now, assume that $W \cap R$ induces a split graph which is not a cluster.
  Since $K_1 \oplus (K_2 +P_3)$ is not a monopolar graph and $W \cap R$ has a
  subset $W'$ inducing $P_3$, we have that $\{a,a',b\}$ is not a subset of $W$.
  From here, we can easily check that $W \setminus R$ is any of $\{a,b,c,d\}$,
  $\{a',b,c,d\}$, or $\{a,a',c,d\}$.

  Finally, suppose that $W \cap R$ induces a monopolar graph which is neither a
  cluster or a split graph. Suppose that there exists $k \in K \cap W$, and let
  $(A,B)$ be a monopolar partition of $G[W]$. If $k \in A$, then $W \cap R
  \subseteq B$, implying that $W \cap R$ induce a cluster, which is not the
  case. Then, it must be that $k \in B$, but then $W \cap R \cap B$ would be a
  clique and, since $(W \cap R )\setminus B \subseteq A$, we have that $W \cap
  R$ would induce a split graph, but we are assuming it does not. Therefore, $K
  \cap W = \varnothing$, and it follows that $W$ is the union of $S$ with a
  maximum subset of $R$ inducing a monopolar graph.

  \item Let $W$ be a set inducing a maximum co-monopolar subgraph of $G$. If $W
  \cap R$ is an independent set, then $(\{a,a',b\}, \{c,d\} \cup (W \cap R))$ is
  a co-monopolar partition of $G[(W \cap R) \cup S \cup K]$. Hence, if $W \cap
  R$ is an independent set, $W \setminus R = S \cup K$.

  Notice that, if $W \cap R$ is not an independent set, then $S \not\subseteq
  W$, otherwise $W$ would have a subset inducing $K_1 + 2K_2$, which is not a
  co-monopolar graph. In addition, if $W \cap R$ induces a graph with split
  partition $(A,B)$ and $W \setminus R$ is any of $\{a,b,c,d\}$, or
  $\{a',b,c,d\}$, then $W$ induces a graph with co-monopolar partition $(A \cup
  \{a,d\}, B \cup \{b,c\})$ or $(A \cup \{a',d\}, B \cup \{b,c\})$. Also, if $W
  \cap R$ induces a complete multipartite graph and $W \setminus R =
  \{a,a',b,c\}$, then $G[W]$ has the co-monopolar partition $(\{a,a' \}, (W \cap
  R) \cup \{b,c\})$.

  If $W \cap R$ induces a split graph which is not a complete multipartite
  graph, then $\{a,a' \} \not\subseteq W$ or, for any subset $\{r_1,r_2,r_3\}$
  of $W$ inducing $\overline{P_3}$, $\{a,a',r_1,r_2,r_3\}$ would induce $K_1 +
  2K_2$, which is not a co-monopolar graph. In addition, since $W \cap R$ is a
  split graph, $\{a,b,c,d\} \cup (W \cap R)$ and $\{a',b,c,d\} \cup (W \cap R)$
  induce split graphs, and hence co-monopolar graphs, so in this case $W$ is the
  union of a maximum subset of $R$ inducing a split graph with one of
  $\{a,b,c,d\}$ or $\{a',b,c,d\}$.

  Else, if $W \cap R$ induces a complete multipartite graph which is not a split
  graph, then $W$ has a subset $W'$ inducing $C_4$. Therefore, neither $\{a,d\}
  \subseteq W$ or $\{a',d\} \subseteq W$, otherwise $W$ would have a subset
  inducing $C_4+2K_1$, which is not a co-monopolar graph. Moreover, the union of
  any subset of $R$ inducing a complete multipartite graph with $\{a,a',b,c\}$
  induces a co-monopolar graph, so in this case $W$ is precisely the union of
  a maximal subset of $R$ inducing a complete multipartite graph with
  $\{a,a',b,c\}$.

  Finally, assume that $W \cap R$ induces a co-monopolar graph which is neither
  a split graph or a complete multipartite graph. Suppose for a contradiction
  that there exist a vertex $s \in S\cap W$, and let $(A,B)$ be a co-monopolar
  partition of $G[W]$. If $s\in A$, then $W \cap R \subseteq B$, which is
  impossible since $G[W \cap R]$ is not a complete multipartite graph. Then,
  $s\in B$, but in such a case $B\cap W \cap R$ is an independent set, and
  $(W \cap R) \setminus B \subseteq A$, implying that $W \cap R$ induces a split
  graph, contradicting our initial assumption. Hence $S \cap W = \varnothing$.
  Additionally, for any subset $W'$ of $R$ inducing a co-monopolar graph, $W'
  \cup K$ is also a co-monopolar graph, so in this case $W$ is the union of $K$
  with a maximum subset of $R$ inducing a co-monopolar graph.

  \item Let $W$ be a set inducing a maximum polar subgraph of $G$. If $(A, B)$
  is a polar partition of $G[W \cap R]$, then $(A\cup K, B \cup S)$ is a polar
  partition of $G[W]$.

  \item Let $W$ be a set inducing a maximum unipolar subgraph of $G$. If $(A,
  B)$ is a unipolar partition of $G[W \cap R]$, then $(A\cup K, B \cup S)$ is a
  polar partition of $G[W]$.

  \item Let $W$ be a set inducing a maximum co-unipolar subgraph of $G$. Notice
  that, for any independent subset $R'$ of $R$, $(\{a, d\} \cup R', \{a'\} \cup
  K)$ is a co-unipolar partition of $G[S \cup K \cup R']$. Therefore, if $W \cap
  R$ is an independent subset of $R$, we have that $W$ is the union of a maximum
  independent subset of $R$ with $S \cup K$.
  
  Observe that $K_2 + K_3$ is not a co-unipolar graph. Hence, if $W \cap R$ is
  not an independent set, either $\{a, a'\} \not\subseteq W$ or $c \notin W$. It
  easily follows from the previous observation that, if $W \cap R$ induces a
  nonempty graph with bipartition $(A, B)$, then $W$ is the union of a maximum
  subset of $R$ inducing a bipartite graph with some of $K \cup \{a,d\}$, $K
  \cup \{a',d\}$, or $S \cup \{b\}$.

  Now, assume that $W \cap R$ induces co-unipolar graph which is a nonempty
  bipartite graph. We claim that, in such case, $\{a, a'\} \not\subseteq W$, and
  we prove it by means of contradiction. Suppose that $a, a' \in W$, and let
  $(A, B)$ be a co-unipolar partition of $G[W]$. Since $G[W \cap R]$ is not an
  empty graph, $W \cap B \neq \varnothing$, and thus, either $a \in A$ and $a'
  \in B$, or vice versa. However, due to $B \cap \{a, a'\} \neq \varnothing$ we
  have that $W \cap R \cap B$ is an independent set, but then $W\cap R$ induces
  a bipartite graph, reaching a contradiction. From here, it is easy to conclude
  that, in this case, $W$ is the union of a maximum subset of $R$ inducing a
  co-unipolar graph with some of $K \cup \{a, d\}$ or $K \cup \{a', d\}$.
\end{enumerate}
\end{proof}

\begin{lemma}
\label{lem:Fspider}
Let $G = (S, K, R)$ be an $F$-spider, where $S=\{a,a',d\}$, $K=\{b,c\}$ and
$\{a,a',b\}$ induces $P_3$. Let $W$ be a subset of $V_G$, and let $H = G[R]$.
The following statements hold true.
\begin{enumerate}
\item $W$ is a maximum clique of $G$ if and only if $W$ is a witness of
\[
  \max \{ \lvert \{a,b\} \rvert,
  \lvert \{a',b\} \rvert,
  \lvert \{b,c\} \rvert,
  \lvert \{c,d\} \rvert,
  \lvert \mathsf{MC}(H) \cup K \rvert \}.
\]

\item $W$ is a maximum independent set of $G$ if and only if $W$ is a witness of
\[
  \max \{ \lvert \{a,a',c\} \rvert,
  \lvert \mathsf{MI}(H) \cup S \rvert \}.
\]

\item $W$ is a set inducing a maximum bipartite subgraph of $G$ if and only if
$W$ is a witness of
\[
  \max \{ \lvert S \cup K \rvert, \lvert \mathsf{MI}(H) \cup S
  \cup (K \setminus\{b\}) \rvert, \lvert \mathsf{MI}(H) \cup S
  \cup (K \setminus\{c\}) \rvert, \lvert \mathsf{MB}(H) \cup S
  \rvert \}.
\]

\item $W$ is a set inducing a maximum co-bipartite subgraph of $G$ if and only
if $W$ is a witness of
\begin{multline*}
  \max \{ \lvert (S \cup K) \setminus \{a\} \rvert,
  \lvert (S \cup K) \setminus \{a' \} \rvert,
  \lvert \mathsf{MC}(H) \cup K \cup \{a\} \rvert,
  \lvert \mathsf{MC}(H) \cup K \cup \{a' \} \rvert, \\
  \lvert \mathsf{MC}(H) \cup K \cup \{d\} \rvert,
  \lvert \mathsf{McB}(H) \cup K \rvert \}.
\end{multline*}

\item $W$ induces a maximum split graph in $G$ if and only if $W$ is the union
of a maximum subset of $R$ inducing a split graph with $S \cup K$.

\item $W$ induces a maximum cluster of $G$ if and only if $W$ is a witness of
\[
  \max \{ \lvert \{a,a',c,d\} \rvert,
  \lvert \mathsf{MC}(H) \cup \{a,a',c\} \rvert,
  \lvert \mathsf{MUC}(H) \cup S \rvert \}.
\]

\item $W$ induces a maximum complete multipartite graph in $G$ if and only if
$W$ is a witness of
\[
  \max \{ \lvert \{a,a',b,c\} \rvert,
  \lvert \mathsf{MI}(H) \cup S \rvert,
  \lvert \mathsf{MI}(H) \cup \{a,a',b\} \rvert,
  \lvert \mathsf{MIJ}(H) \cup K \rvert \}.
\]

\item $W$ induces a maximum monopolar graph in $G$ if and only if $W$ is a
witness of
\[
  \max \{ \lvert \mathsf{MS}(H) \cup S \cup K \rvert,
  \lvert \mathsf{MUC}(H) \cup \{a,a',b,d\} \rvert,
  \lvert \mathsf{MUC}(H) \cup \{a,a',c,d\} \rvert,
  \lvert \mathsf{MM}(H) \cup S \rvert \}.
\]

\item $W$ induces a maximum co-monopolar graph in $G$ if and only if $W$ is a
witness of
\[
  \max \{ \lvert \mathsf{MS}(H) \cup S\cup K \rvert,
    \lvert \mathsf{MJI}(H) \cup \{a,b,c\} \rvert,
    \lvert \mathsf{MJI}(H) \cup \{a',b,c\} \rvert,
  \]
  \[
    \lvert \mathsf{MJI}(H) \cup \{b,c,d\} \rvert,
    \lvert \mathsf{McM}(H) \cup K \rvert \}.
  \]

\item $W$ induces a maximum polar graph in $G$ if and only if $W$ is the union
of a maximum subset of $R$ inducing a polar graph with $S \cup K$.

\item $W$ induces a maximum unipolar graph in $G$ if and only if $W$ is the
union of a maximum subset of $R$ inducing a unipolar graph with $S \cup K$.

\item $W$ induces a maximum co-unipolar graph in $G$ if and only if $W$ is the
union of a maximum subset of $R$ inducing a co-unipolar graph with $S \cup K$.

\end{enumerate}
\end{lemma}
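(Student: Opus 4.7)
My plan is to follow, almost verbatim, the template of the proof of \Cref{lem:ovPspider}. In an $F$-spider, the head $R$ is completely adjacent to $K$ and completely nonadjacent to $S$, exactly as in the $\overline{P}$-spider case, so the interaction between $W \cap R$ and $W \cap (S \cup K)$ is governed by the same two principles: vertices of $K$ may be joined to any clique-like piece of $G[W \cap R]$ while staying on a clique-like side of a partition, and vertices of $S$ may be absorbed into any independent-like piece while staying on an independent-like side. Only the internal structure of $G[S \cup K] \cong F$ changes, and this is what produces the different enumerations in each of the twelve items.

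First I would record the small-subgraph data about $F$ that drives the case analysis: the edges are $ab, a'b, bc, cd$; the vertices $a, a'$ are twins with neighborhood $\{b\}$ while $d$ is a pendant at $c$; the maximal cliques of $F$ are $\{a,b\}, \{a',b\}, \{b,c\}, \{c,d\}$, its unique maximum independent set is $\{a, a', c\}$, and $(S, K)$ is a split partition of $F$; the induced $P_3$'s are $\{a, a', b\}, \{a, b, c\}, \{a', b, c\}, \{b, c, d\}$; the induced $\overline{P_3}$'s are $\{a, b, d\}, \{a', b, d\}, \{a, c, d\}, \{a', c, d\}$; and the induced $2K_2$'s are $\{a,b,c,d\}$ and $\{a',b,c,d\}$. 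With these in hand, each item reduces to a short case split on the type of graph induced by $W \cap R$ (empty, or nonempty independent, clique, cluster, complete multipartite, split, bipartite, co-bipartite, monopolar, co-monopolar, polar, unipolar, or co-unipolar, as appropriate for the item). For each case I determine which subsets of $S \cup K$ are compatible with the target property on $G[W]$, and take the maximum; these are exactly the terms in the witness formula.

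Items $5, 10, 11, 12$ are almost immediate from the fact that $(S, K)$ is a split partition of $F$. Given any split, polar, unipolar, or co-unipolar partition $(A', B')$ of $G[W \cap R]$, the pair $(A' \cup S, B' \cup K)$ (respectively its dual in the co-unipolar case) is a partition of the same type of $G[W \cap R \cup S \cup K]$, since $R$ is nonadjacent to $S$ and completely adjacent to $K$, and the join of a clique with a complete multipartite graph is again complete multipartite. This simultaneously shows that any maximum witness must contain all of $S \cup K$, yielding the simple union formulas stated.

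The main obstacle will be items 8 and 9, as in the proof of \Cref{lem:ovPspider}. For these the case split is on what $G[W \cap R]$ is: a clique; a noncomplete graph that is simultaneously split and a cluster; a cluster that is not a split graph; a split graph that is not a cluster; or a monopolar graph that is none of the above (and symmetrically for item 9 with \emph{cluster} replaced by \emph{complete multipartite}). The workhorses are the forbidden induced subgraphs $K_1 \oplus (K_2 + P_3)$ and $K_2 \oplus 2K_2$ for monopolarity, and $K_1 + 2K_2$ and $C_4 + 2K_1$ for co-monopolarity; together with the twin-pendant structure of $F$ they force which vertices of $F$ can coexist with each possible $G[W \cap R]$, leaving precisely the configurations listed in the statement. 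In each subcase I will display a forbidden induced subgraph to exclude specific vertices of $F$ from $W$, and then exhibit a (co-)monopolar partition for the surviving configurations. The remaining items $1$--$4, 6, 7$ are direct finite enumerations carried out identically in spirit to their $\overline{P}$-spider counterparts.
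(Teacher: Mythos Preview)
Your plan matches the paper's proof almost exactly: the paper also runs the template of \Cref{lem:ovPspider}, case-splitting on the type of graph induced by $W\cap R$, using $K_2\oplus 2K_2$ and $C_4+2K_1$ as the key obstructions for items 8 and 9, and disposing of items 5, 10, 11, 12 in one line via the observation that $(S,K)$ is a split partition of $F$ so any split/polar/unipolar/co-unipolar partition $(A',B')$ of $G[W\cap R]$ extends to $(A'\cup S,\,B'\cup K)$.

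Two small factual slips in your recorded data about $F$ that you should fix before writing it out: the maximum independent set $\{a,a',c\}$ is not unique, since $S=\{a,a',d\}$ is also independent of size three (this is exactly why item 2 lists both $\{a,a',c\}$ and $\mathsf{MI}(H)\cup S$); and $F$ contains no induced $2K_2$ at all, since $\{a,b,c,d\}$ and $\{a',b,c,d\}$ each induce $P_4$, not $2K_2$. Neither slip affects your strategy, but the second one means that in item 8 the obstruction $K_2\oplus 2K_2$ is built from $K=\{b,c\}$ together with a $2K_2$ inside $W\cap R$, not from inside $F$.
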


\begin{proof}
Again, due to the similarities in the proofs, we consider that reading 4 is
enough to have a general idea of the arguments used throughout the whole proof.

\begin{enumerate}
  \item Let $W$ be a maximum clique of $G$. If $R = \varnothing$, $W$ clearly is
    one of $\{a,b\}$, $\{a',b\}$, $\{b,c\}$, or $\{c,d\}$. Otherwise, if $R \neq
    \varnothing$, $R' \cup K$ is a clique, for any clique $R'$ contained in $R$,
    so in this case $W \cap R$ is a nonempty clique. It follows that $W \cap S =
    \varnothing$ and $W$ is the union of $K$ a maximum clique contained in $R$.

  \item Let $W$ be a maximum independent set of $G$. If $R = \varnothing$, $W$
    evidently is one of $\{a,a',c\}$ or $S$. Otherwise, if $R \neq \varnothing$,
    $R' \cup S$ is an independent set, for any independent subset $R'$ of $R$.
    Thus, if $R \neq \varnothing$, $W \cap R$ is a nonempty independent subset
    of $R$, so $W \cap K = \varnothing$. Hence, in this case $W$ is the union of
    $S$ with a maximum independent subset of $R$.

  \item Let $W$ be a set inducing a maximum bipartite subgraph of $G$. If $W
    \cap R = \varnothing$, then clearly $W = S \cup K$. Else, if $W \cap R$ is a
    nonempty independent set, then $\lvert W \cap K \rvert \le 1$. In addition,
    for any independent subset $R'$ of $R$, both $R' \cup S \cup \{b\}$ and $R'
    \cup S \cup \{c\}$ induce bipartite graphs, so in this case $W$ is the union
    of a maximum independent set of $R$ with either $S \cup \{b\}$ or $S \cup
    \{c\}$. Otherwise, $W \cap R$ induces a nonempty bipartite graph and $W \cap
    K = \varnothing$, where it easily follows that $W$ is the union of $S$ with
    a maximum bipartite subgraph of $H$.

  \item Let $W$ be a set inducing a maximum co-bipartite subgraph of $G$. It is
    an easy observation that the only subsets of $S \cup K$ inducing a maximum
    co-bipartite graph are $(S \cup K) \setminus \{a\}$ and $(S \cup K)
    \setminus \{ a' \}$; hence, if $W \cap R = \varnothing$, $W$ must be one of
    these sets. Notice that if $W \cap R \ne \varnothing$ then $\lvert W \cap S
    \rvert \le 1$. From here, it is easy to observe that if $W \cap R$ is a
    nonempty clique, then $W \setminus R$ is one of $K \cup \{a\}$, $K \cup \{a'
    \}$, or $K \cup \{d\}$, so in this case $W$ is the union of one of these
    sets with a maximum clique of $H$. Finally, if $W \cap R$ induces a
    co-bipartite graph which is not a clique, then $W \cap S = \varnothing$ and
    $W$ clearly is the union of $K$ with a maximum set inducing a co-bipartite
    subgraph of $H$.

  \item Let $W$ be a set inducing a maximum split subgraph of $G$. Just notice
    that, for any subset $R'$ of $R$ inducing a graph with split partition $(A,
    B)$, $(A \cup S, B \cup K)$ is a split partition of $G[S\cup K \cup R']$.

  \item Let $W$ be a set inducing a maximum cluster of $G$. If $R =
    \varnothing$, then $W = \{a,a',c,d\}$. Otherwise, the union of $S$ with any
    subset of $R$ inducing a cluster is also a cluster. Thus, we may assume that
    $ \lvert W \setminus R \rvert \ge 3$. Moreover, if $W \neq \{a,a',c,d\}$,
    then $W \cap R \neq \varnothing$ and then, none of $\{a,b\}$, $\{a',b\}$, or
    $\{c,d\}$, is a subset of $W$, or $W$ would have a subset inducing $P_3$.
    From here, it is a easy to conclude that, if $W \cap R$ is a clique, then $W
    \setminus R \in \{ S, \{a,a',c\} \}$, while, if $W \cap R$ induces a cluster
    which is not complete graph, then $W \setminus R = S$.

  \item Let $W$ be a set inducing a maximum complete multipartite subgraph of
    $G$. If $W \cap R = \varnothing$, then $W$ is a maximum subset of $S \cup K$
    inducing a complete multipartite graph, so $W = \{a,a',b,c\}$. Otherwise, $W
    \cap R \neq \varnothing$, and since $G[W]$ is $\overline{P_3}$-free, none of
    $\{c,a\}$, $\{c,a' \}$, or $\{b,d\}$, could be a subset of $W$. It follows
    that, in this case, $\lvert W \setminus R \rvert \le 3$. Notice that the
    union $S$ with any independent subset of $R$ is an independent set, so it
    induces a complete multipartite graph. Hence, if $W \cap R$ is an
    independent set, $\lvert W \setminus R \rvert = 3$ and a simple verification
    yields that $W \setminus R$ can be any of $S$ or $\{a,a',b\}$. Finally, if
    $W \cap R$ induces a complete multipartite graph which is not an empty
    graph, then $W \cap S = \varnothing$, and $W \setminus R = K$.

  \item Let $W$ be a set inducing a maximum monopolar subgraph of $G$. If $W
    \cap R$ induces a graph with split partition $(A,B)$, then $(A \cup S, B
    \cup K)$ is a split partition of $G[S \cup K \cup (W \cap R)]$. Else, if $W
    \cap R$ induces a cluster which is not a split graph, then $W \cap R$ has a
    subset inducing $2K_2$, so $K \not\subseteq W$, because $K_2 \oplus 2K_2$ is
    not a monopolar graph. In addition, it is easy to corroborate that for any
    subset $R'$ of $R$ inducing a cluster, $R \cup \{a,a',b,d\}$ and $R'
    \cup\{a,a',c,d\}$ induce monopolar graphs. Finally, assume that $W \cap R$
    induces a monopolar graph which is neither a split graph or a cluster.
    Suppose for a contradiction that there exists a vertex $k \in K \cap W$, and
    let $(A,B)$ be a monopolar partition of $G[W]$. If $k \in A$, then $W \cap R
    \subseteq B$, so $W \cap R$ induces a cluster, but we are assuming this is
    not the case. Thus, $k \in B$, but then, $B \cap W \cap R$ is a clique, and
    $(W \cap R) \setminus B \subseteq A$, so $W \cap R$ induces a split graph,
    which is impossible. Therefore, $K \cap W = \varnothing$. Moreover, if $R'$
    is a subset of $R$ inducing a graph with monopolar partition $(A,B)$, then
    $(A \cup S, B)$ is a monopolar partition of $G[R' \cup S]$, where the result
    follows.

  \item Let $W$ be a set inducing a maximum co-monopolar subgraph of $G$. If a
    subset $R'$ of $R$ induces a graph with split partition $(A, B)$, then $(B
    \cup K, A \cup S)$ is a co-monopolar partition of $G[R' \cup S \cup K]$.
    Thus, if $W \cap R$ induces a split graph, then $W \setminus R = S \cup K$.

    Now, if $W \cap R$ induces a complete multipartite graph which is not a
    split graph, there exists a subset $W'$ of $W \cap R$ inducing a 4-cycle.
    Hence, since $C_4+2K_1$ is not a co-monopolar graph, $\lvert W \cap S \rvert
    \le 1$. Moreover, for any subset $R'$ of $R$ inducing a complete
    multipartite graph and any $s\in S$, $(\{s\}, R' \cup K)$ is a co-monopolar
    partition of $G[R' \cup K \cup \{s\}]$. Thus, if $W \cap R$ induces a
    complete multipartite graph which is not a split graph, then $W \setminus R$
    is one of $\{a,b,c\}$, $\{a',b,c\}$, $\{b,c,d\}$.

    Finally, assume that $W \cap R$ induces a co-monopolar graph which is
    neither a complete multipartite graph or a split graph. Suppose for a
    contradiction that there exists a vertex $s \in S \cap W$, and let $(A, B)$
    be a co-monopolar partition of $G[W]$. If $s \in A$, then $W \cap R \cap A =
    \varnothing$, so $W \cap R$ must induce a complete multipartite graph, which
    is not the case. Thus, $s \in B$, so $B \cap W \cap R$ is an independent
    set, because complete multipartite graphs are $\overline{P_3}$-free graphs.
    But then, $W \cap R$ induces a split graph, which is impossible. Therefore
    $W \cap S = \varnothing$. In addition, if $R'$ is any subset of $R$ inducing
    a graph with co-monopolar partition $(A,B)$, then $(A\cup K, B)$ is a
    co-monopolar partition of $G[R' \cup K]$. Hence, if $W \cap R$ induces a
    co-monopolar graph which is neither a split graph or a complete multipartite
    graph, then $W \setminus R = K$.

  \item Let $W$ be a set inducing a maximum polar subgraph of $G$. The result
    follows since, for any subset $R'$ of $R$ inducing a graph with polar
    partition $(A, B)$, $(A \cup K, B \cup S)$ is a polar partition of $G[S\cup
    K \cup R']$.

  \item Let $W$ be a set inducing a maximum unipolar subgraph of $G$. It is
    enough to notice that, for any subset $R'$ of $R$ inducing a graph with
    unipolar partition $(A, B)$, $(A \cup K, B \cup S)$ is a unipolar partition
    of $G[S\cup K \cup R']$.

  \item Let $W$ be a set inducing a maximum co-unipolar subgraph of $G$. The
    result follows since, for any subset $R'$ of $R$ inducing a graph with
    co-unipolar partition $(A, B)$, we have that $(A \cup S, B \cup K)$ is a
    co-unipolar partition of $G[S \cup K \cup R']$.
  \end{enumerate}
\end{proof}

For the proof of the next proposition we strongly use, without explicit mention,
that the complements of $P$-spiders and the complements of $F$-spiders are,
respectively, $\overline{P}$-spiders and $\overline{F}$-spiders. Notice that by
a simple complementary argument, analogous results can be given for computing
$\mathsf{MI}(G_x)$, $\mathsf{McB}(G_x)$, $\mathsf{MJI}(G_x)$,
$\mathsf{McM}(G_x)$, and $\mathsf{McU}(G_x)$.

\begin{proposition}
\label{prop:extGxLinear}
    Let $G$ be a $P_4$-extendible graph, and let $T$ be its associated parse
    tree. For any node $x$ of $T$ the followings assertions are satisfied.
    \begin{multicols}{2}
    \begin{enumerate}
    \item $\mathsf{MC}(G_x)$ can be computed in linear time.
    \item $\mathsf{MB}(G_x)$ can be computed in linear time.
    \item $\mathsf{MS}(G_x)$ can be computed in linear time.
    \item $\mathsf{MUC}(G_x)$ can be computed in linear time.
    \item $\mathsf{MM}(G_x)$ can be computed in linear time.
    \item $\mathsf{MP}(G_x)$ can be computed in linear time.
    \item $\mathsf{MU}(G_x)$ can be computed in linear time.
    \end{enumerate}
    \end{multicols}
\end{proposition}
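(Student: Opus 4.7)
The plan is to mimic the proof of \Cref{prop:sparseGxLinear}, replacing the case analysis on ps-tree node types with a case analysis on parse-tree node types, and using the machinery developed in the previous lemmas as the base cases of a bottom-up recursion on $T$. For each item, I would proceed by structural induction on the subtree rooted at $x$. If $x$ is a leaf the claim is trivial; if $x$ has label $0$, the recurrences in \Cref{prop:maxUnion} express the desired maximum subgraph at $x$ as either a union of, or a maximum over, the corresponding values at the children of $x$ (possibly combined with auxiliary quantities such as $\mathsf{MI}$, $\mathsf{MC}$, $\mathsf{MJI}$, or $\mathsf{MUC}$ at the children, which by a simple complementary argument can also be computed in linear time); symmetrically, if $x$ has label $1$, one applies \Cref{prop:maxJoin}.

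For $x$ with label $2$, the associated graph $G_x$ is one of the eight extension graphs depicted in \Cref{fig:extSets}. Since this is a finite family of fixed graphs of order at most $5$, each of the required maximum subgraphs can be obtained in constant time by brute-force enumeration, contributing only $O(1)$ work per such node. For $x$ with label $3$, the graph $G_x$ is an $X$-spider for some separable extension graph $X\in\{P_4,P,F,\overline{P},\overline{F}\}$, and its two children in $T$ correspond respectively to $V_X=S\cup K$ and to the head $R$. The case $X=P_4$ is a particular case of a thin (or thick) spider, so the recurrences in \Cref{prop:maxThinSp} apply verbatim with $H=G[R]$. The cases $X\in\{F,\overline{F}\}$ and $X\in\{P,\overline{P}\}$ are handled by \Cref{lem:Fspider} and \Cref{lem:ovPspider} (together with their complement-dual versions), each of which expresses the maximum subgraph at $x$ in terms of a constant number of maxima involving the corresponding quantities for $H=G[R]$, which by induction are already available from the right child of $x$.

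For the complexity bound I would argue as in the $P_4$-sparse case. The parse tree has $O(n)$ nodes, and at each node labeled $0$ or $1$ the work is proportional to the number of children (we scan the children's stored values and take a union or a maximum); at each node labeled $2$ or $3$ the work is $O(1)$, since all recurrences involve only a bounded number of precomputed values at the two children. Summing the work over all nodes gives $O(n)$ total time, as the sum of the degrees in the parse tree is $O(n)$. To keep all needed values available at every node, one stores alongside each node $x$ the twelve quantities $\mathsf{MC}(G_x),\mathsf{MI}(G_x),\mathsf{MB}(G_x),\mathsf{McB}(G_x),\mathsf{MS}(G_x),\mathsf{MUC}(G_x),\mathsf{MJI}(G_x),\mathsf{MM}(G_x),\mathsf{McM}(G_x),\mathsf{MP}(G_x),\mathsf{MU}(G_x),\mathsf{McU}(G_x)$ (or rather their sizes plus a pointer to a witness), so that the recurrences at the parent can be evaluated in constant time per operand.

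The main obstacle I anticipate is purely bookkeeping rather than conceptual: at label-$3$ nodes one has to verify that every maximum listed in \Cref{lem:Fspider} and \Cref{lem:ovPspider} (and in their complementary counterparts for the $\overline{F}$- and $\overline{P}$-spider cases) can indeed be assembled from a constant number of the twelve stored values at the right child, and that the enumeration of $S$ and $K$ inside the spider partition is uniformly bounded (since $|S\cup K|\le 5$). Once this is checked case-by-case, the induction goes through, and applying it to the root of $T$ yields \Cref{thm:linearSparse}'s analogue for $P_4$-extendible graphs.
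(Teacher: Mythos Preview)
Your proposal is correct and follows essentially the same approach as the paper: both argue by structural induction on the parse tree, reusing \Cref{prop:maxUnion,prop:maxJoin} for nodes labeled $0$ and $1$, handling label-$2$ nodes in constant time since extension graphs form a finite family, and invoking \Cref{prop:maxThinSp,lem:ovPspider,lem:Fspider} (together with their complement duals) for the $X$-spider nodes labeled $3$. Your complexity accounting is somewhat more explicit than the paper's, but the underlying argument is the same.
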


\begin{proof}
  The assertions trivially hold whenever $x$ is a leaf of $T$. Also, if $x$ is a
  node labeled 0 or 1, the proof follows exactly as in
  \Cref{prop:sparseGxLinear}. Thus, we will assume for the rest of the proof
  that $x$ has label either 2 or 3. Even in these cases the proof is similar in
  flavor to \Cref{prop:sparseGxLinear}, but we use
  \Cref{lem:ovPspider,lem:Fspider} besides \Cref{prop:maxThinSp}. Hence, we only
  write the proof for item 6.

  If $x$ is a node labeled 2, it is not hard to verify that $\mathsf{MP}(G_x) =
  G_x$. Otherwise, $x$ is a node labeled 3, so $G_x$ is an $X$-spider. By
  \Cref{prop:maxThinSp,lem:ovPspider,lem:Fspider}, if $G_x$ is a graph with
  $X$-spider partition $(S, K, R)$, then $\mathsf{MP}(G_x)$ is the union of
  $S\cup K$ with a maximum subset of $R$ inducing a polar graph. The result
  follows since $G_x$ has $O(n)$ descendants.
\end{proof}

The main results of this section are summarized in the next theorem, which is a
direct consequence of the proposition above.

\begin{theorem}
\label{thm:linearExtendible}
    For any $P_4$-extendible graph $G$, maximum order subgraphs of $G$ with the
    properties of being monopolar, unipolar, or polar, can be found in linear
    time. In consequence, the problems of deciding whether a $P_4$-extendible
    graph is either a monopolar graph, a unipolar graph, or a polar graph are
    linear-time solvable.
\end{theorem}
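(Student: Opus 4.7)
The plan is to derive this theorem as a direct corollary of \Cref{prop:extGxLinear}, following exactly the same template used to prove \Cref{thm:linearSparse} from \Cref{prop:sparseGxLinear}. First, I would invoke the algorithm of Hochst\"attler and Schindler~\cite{hochstattler1995} to compute, in linear time, the parse tree $T$ associated with the input $P_4$-extendible graph $G$; recall that $T$ has only $O(n)$ nodes and that the lists of children of its nodes can be precomputed and stored within linear space, as discussed before the statement of \Cref{prop:extGxLinear}.

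Next, I would apply items 5, 6, and 7 of \Cref{prop:extGxLinear} to the root $r$ of $T$, observing that $G_r = G$. This yields $\mathsf{MM}(G)$, $\mathsf{MP}(G)$, and $\mathsf{MU}(G)$, each computed in linear time, which produces witness subsets of $V_G$ of maximum size inducing a monopolar, a polar, and a unipolar subgraph of $G$, respectively. This settles the first claim of the theorem.

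For the consequence, I would note that $G$ is monopolar (respectively, unipolar, polar) if and only if the whole vertex set induces such a subgraph, i.e., if and only if $\lvert \mathsf{MM}(G) \rvert = \lvert V_G \rvert$ (respectively, $\lvert \mathsf{MU}(G) \rvert = \lvert V_G \rvert$, $\lvert \mathsf{MP}(G) \rvert = \lvert V_G \rvert$). After the subgraphs have been computed, this comparison takes constant time, so the decision problems are solved within the same linear running time.

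The only real obstacle, already addressed in the proof of \Cref{prop:extGxLinear}, is verifying that a bottom-up traversal of $T$ computing all the required maximum subgraphs at intermediate nodes does not exceed linear total work; this is guaranteed because $T$ has linearly many nodes, because at each node labeled $0$ or $1$ the recurrences from \Cref{prop:maxUnion,prop:maxJoin} spend time proportional to the number of children, and because at each node labeled $2$ or $3$ the recurrences coming from \Cref{prop:maxThinSp,lem:ovPspider,lem:Fspider} involve only a constant number of set unions over previously computed subgraphs of the unique nontrivial subtree.
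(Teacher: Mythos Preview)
Your proposal is correct and follows essentially the same approach as the paper: both derive the theorem directly from \Cref{prop:extGxLinear} by applying it to the root of the parse tree, exactly mirroring the derivation of \Cref{thm:linearSparse} from \Cref{prop:sparseGxLinear}. Your write-up is in fact more detailed than the paper's (you make explicit the parse-tree construction, the comparison $\lvert \mathsf{MM}(G)\rvert = \lvert V_G\rvert$ for the decision problems, and the linear-time accounting over the tree), but the underlying argument is identical.
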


\begin{proof}
    From \Cref{prop:extGxLinear}, $\mathsf{MM}(G_x), \mathsf{MU}(G_x)$ and
    $\mathsf{MP}(G_x)$ can be found in linear time for any node $x$ of the parse
    tree associated to a $P_4$-extendible graph. Particularly, it can be done
    for the root of the parse tree, so the result follows.
\end{proof}

\section{Conclusions}
\label{sec:conc}

This work must be considered a sequel and a complement of
\cite{contrerasCogGen1}, where, among other things, some properties related to
polarity on $P_4$-sparse and $P_4$-extendible graphs were characterized by
finite families of forbidden induced subgraphs. Specifically, the families of
minimal $(s,1)$-polar obstructions for any nonnegative integer $s$, as well as
the families of minimal monopolar, unipolar, and polar obstructions, when
restricted to the mentioned graph classes, were exhibited in the aforementioned
paper. It is worth noticing that, from such characterizations, it directly
follows that there exist brute force algorithms of polynomial-time complexity
for deciding whether a $P_4$-sparse or a $P_4$-extendible graph is monopolar,
unipolar, or polar.

The results in this work are divided in two parts. First, we adapt the
techniques used in \cite{hellDAM261} to generalize the characterization of
cograph minimal $2$-polar obstructions given in that paper, by explicitly
exhibiting complete lists of minimal $2$-polar obstructions when restricted to
either $P_4$-sparse or $P_4$-extendible graphs. The following proposition
summarize our main result on this topic.

\begin{theorem}
    Let $\mathcal{G}$ be any subfamily of either $P_4$-sparse or
    $P_4$-extendible graphs which is both, hereditary and closed under
    complements. Let $\mathcal{F}$ be the family of graphs depicted in
    \Cref{fig:basic2obsmin}. A graph $G$ in $\mathcal{G}$ is a minimal $2$-polar
    obstruction if and only $G$ can be obtained from some graph in $\mathcal{G}
    \cap \mathcal{F}$ by a finite sequence of partial complementations.
\end{theorem}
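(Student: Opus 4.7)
The plan is to leverage the complete characterizations of minimal $2$-polar obstructions proved in the previous subsections---$50$ graphs in the $P_4$-sparse case ($F_1,\dots,F_{25}$ together with their complements) and $82$ in the $P_4$-extendible case ($F_1,\dots,F_{41}$ together with their complements)---together with \Cref{lem:hellDAM261partialComp}. The set $\mathcal{F}$ in \Cref{fig:basic2obsmin} is designed to contain exactly one representative of each orbit under partial complementation.

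First I would strengthen \Cref{lem:hellDAM261partialComp} to the statement that partial complementation preserves being a minimal $2$-polar obstruction within any hereditary class closed under complement and disjoint union. Indeed, the lemma already prevents any partial complement $H'$ of an obstruction $H$ from being $2$-polar, since partial complementation is involutive in the sense that $H$ is itself a partial complement of $H'$. For minimality, a vertex-deleted subgraph $H' - v$ is itself a partial complement of the corresponding $H - w$, which is $2$-polar by minimality of $H$; applying the lemma again shows $H' - v$ is $2$-polar. Thus orbits under partial complementation consist entirely of minimal $2$-polar obstructions, and in particular the orbit of each representative in $\mathcal{F}$ exhausts the corresponding equivalence class within the full list.

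The backward implication is now immediate: given $F \in \mathcal{G} \cap \mathcal{F}$ and any $G \in \mathcal{G}$ obtained from $F$ by a sequence of partial complementations, the preceding step shows $G$ is a minimal $2$-polar obstruction. For the forward implication, let $G \in \mathcal{G}$ be a minimal $2$-polar obstruction; by the hereditary property of $\mathcal{G}$, $G$ is also a minimal obstruction in the ambient $P_4$-sparse or $P_4$-extendible class, so by the complete characterizations $G$ lies in the orbit of some $F \in \mathcal{F}$. It remains to verify $F \in \mathcal{G}$.

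The main obstacle is precisely this last step. Since partial complementation mixes components via complement and disjoint union, ensuring $F \in \mathcal{G}$ from $G \in \mathcal{G}$ alone requires $\mathcal{G}$ to absorb the disjoint-union reassembly, which is not directly granted by heredity plus complement-closure. I would handle this by a case analysis of each orbit depicted in \Crefrange{fig:2obsmin7v}{fig:2obsP4ext4}: for every orbit member expressible as $\overline{H_1} + \overline{H_2}$, both $H_1$ and $H_2$ appear as induced subgraphs of some orbit member (and are therefore in $\mathcal{G}$ by heredity), while $\overline{H_1}$ and $\overline{H_2}$ lie in $\mathcal{G}$ by complement-closure; the resulting disjoint union lies in the ambient class by \Cref{lem:hellDAM261partialComp}, and a direct inspection of the small orbits forces it to lie in $\mathcal{G}$ whenever the starting orbit member does, closing the argument.
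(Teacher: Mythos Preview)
Your approach is the natural one and tracks the paper's implicit argument: partition the known minimal $2$-polar obstructions into orbits under partial complementation, with $\mathcal{F}$ supplying one representative per orbit. The backward implication and your strengthening of \Cref{lem:hellDAM261partialComp} to preservation of \emph{minimal} $2$-polar obstructions are correct.

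The step you flag as ``the main obstacle'', however, is a genuine gap that your proposed case analysis cannot close. Take $\mathcal{G}$ to be the hereditary closure of $\{F_2,\overline{F_2}\}$, where $F_2=P_3+C_4$. This $\mathcal{G}$ is hereditary and closed under complements, and consists entirely of cographs (so it sits inside both ambient classes). Now $F_2\in\mathcal{G}$ is a minimal $2$-polar obstruction lying in the orbit of $F_1=K_1+3K_2$; but $F_1$ has seven vertices and equals neither $F_2$ nor the connected graph $\overline{F_2}$, so $F_1\notin\mathcal{G}$, while every other member of $\mathcal{F}$ has at least eight vertices. Hence $\mathcal{G}\cap\mathcal{F}=\varnothing$ and the forward implication fails outright. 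Your claim that ``a direct inspection of the small orbits forces it to lie in $\mathcal{G}$'' is therefore false.

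What is missing is closure under \emph{disjoint union}, precisely the hypothesis already present in \Cref{lem:hellDAM261partialComp}. With that extra assumption, each partial complementation stays inside $\mathcal{G}$, so the entire orbit of $G$ lies in $\mathcal{G}$ and no case analysis is needed at all. You should either add this hypothesis, or restrict the statement to $\mathcal{G}$ being one of the two ambient classes (in which case the theorem is just a compact restatement of the complete lists obtained in \Cref{sec:2obsmin}).
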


\begin{figure}[ht!]
\centering
\begin{tikzpicture}

\begin{scope}[scale=0.8]

\begin{scope}[scale=0.73]
\foreach \i in {0,1}
	\foreach \j in {0,1,2}
		\node [vertex] (\i\j) at (\i*1.5,\j)[]{};
\node [vertex] (7) at (0.75,3)[]{};

\foreach \j in {0,1,2}
	\draw [edge]  (0\j) to (1\j);
\node [rectangle] (n) at (0.75,-0.7){$F_1$};
\end{scope}

\begin{scope}[xshift=3.5cm, scale=0.73]
\foreach \i in {0,1,2}
	\foreach \j in {2,3}
		\node [vertex] (\i\j) at (\i,\j-1.5)[]{};
\foreach \i in {0,1}
	\node [vertex] (\i) at (\i+0.5,2.5)[]{};

\foreach \i/\j in {0/1,03/13,13/23,02/12,12/22}
	\draw [edge]  (\i) to (\j);

\node [rectangle] (n) at (1,-0.7){$F_{6}$};
\end{scope}

\begin{scope}[xshift=7cm, yshift=0.53cm, scale=0.55]
\foreach \i in {0,2}
	\foreach \j in {0,1,2,4}
		\node [vertex] (\i\j) at (\i,3-\j)[]{};

\foreach \i/\j in {04/24,24/22,22/02,02/04,01/21}
	\draw [edge]  (\i) to (\j);

\node [rectangle] (n) at (1,-1.9){$F_{13}$};
\end{scope}

\begin{scope}[xshift=10.5cm, scale=0.73]
\foreach \i in {0,1}
	\foreach \j in {1,2,3}
		\node [vertex] (\i\j) at (\i*2,\j-1)[]{};
\foreach \i in {0,1,2}
	\node [vertex] (\i) at (\i,3)[]{};

\foreach \i in {01,02,03}
	\foreach \j in {11,12,13}
	\draw [edge]  (\i) to (\j);

\node [rectangle] (n) at (1,-0.7){$F_{21}$};
\end{scope}

\begin{scope}[xshift=0.5cm, yshift=-3cm, scale=0.85]
\foreach \i in {0,...,4}
	\node [vertex] (\i) at ($ (0,1) + ({90+(\i*72)}:0.9) $)[]{};
\foreach \i in {7,9,12}
	\node [vertex] (\i) at ($ (0,1) + ({90+(\i*18)}:1.9) $)[]{};

\foreach \i in {2,3}
	\foreach \j in {0,1,4}
		\draw [edge]  (\i) to (\j);

\foreach \i/\j in {2/7,2/9,7/9,3/12,2/3,4/0,0/1}
	\draw [edge]  (\i) to (\j);
\node [rectangle] (n) at (0,-1.5){$F_{26}$};
\end{scope}

\begin{scope}[xshift=4.25cm, yshift=-3.5cm, scale=0.85]
\foreach \i in {0,...,4}
	\node [vertex] (\i) at ({(\i-2)*0.75},2)[]{};
\node [vertex] (5) at (0,1)[]{};
\node [vertex] (6) at (0,0)[]{};
\node [vertex] (7) at (-1,0)[]{};

\foreach \i in {0,...,3}
	\draw let \n1={int(\i+1)} in [edge] (\i) to node [above] {} (\n1);

\draw [-, shorten <=1pt, shorten >=1pt, >=stealth, line width=.7pt, bend left=40]  (0) to (4);

\foreach \i in {0,...,4}
	\draw [edge]  (\i) to (5);

\draw [edge]  (6) to (5);

\node [rectangle] (n) at (0,-0.9){$F_{27}$};
\end{scope}

\begin{scope}[xshift=7.1cm, yshift=-3.5cm, scale=0.82, xscale=1.2]
\foreach \i in {0,1}
	\foreach \j in {0,...,2}
		\node [vertex] (\i\j) at (\i,\j)[]{};
\node [vertex] (03) at (0,2.6)[]{};
\node [vertex] (13) at (1,2.6)[]{};

\foreach \i/\j in {03/13,02/12,01/11,00/01,01/02,11/12}
	\draw [edge] (\i) to node [above] {} (\j);

\node [rectangle] (n) at (0.5,-0.9){$F_{32}$}; 
\end{scope}

\begin{scope}[xshift=11.25cm, yshift=-3.75cm, scale=0.8]
\foreach \i in {0,...,4}
	\node [vertex] (\i) at ($ (0,2) + ({90+(\i*72)}:1) $)[]{};
\foreach \i in {0,...,2}
	\node [vertex] (\i0) at (\i-1,0.3)[]{};

\foreach \i in {0,...,4}
	\draw let \n1={int(mod(\i+1,5))} in [edge] (\i) to node [above] {} (\n1);
\foreach \i/\j in {00/10,1/4}
	\draw [edge] (\i) to node [above] {} (\j);

\node [rectangle] (n) at (0,-0.65){$F_{37}$};
\end{scope}

\end{scope}

\end{tikzpicture}

\caption{Some minimal $2$-polar obstructions.}
\label{fig:basic2obsmin}
\end{figure}
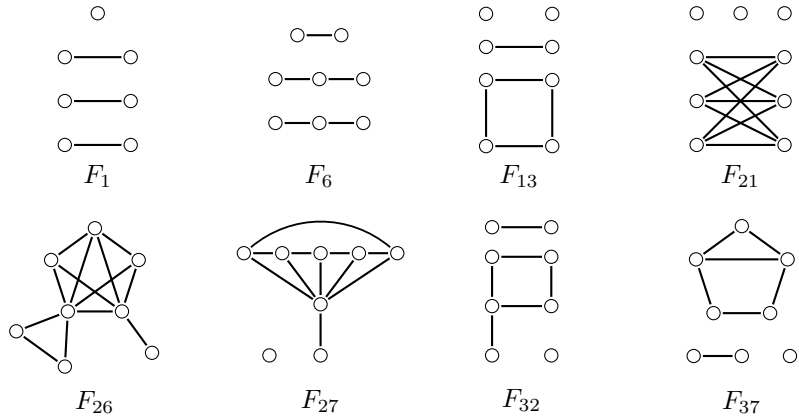

For the second part, based on unique tree representations for $P_4$-sparse and
$P_4$-reducible graphs, we present linear time algorithms for finding largest
subgraphs with properties related to polarity in any graph of such families (see
\Cref{thm:linearSparse,thm:linearExtendible}). These results generalize the one
given by Ekim, Mahadev and de Werra \cite{ekimDAM156} for finding the largest
polar subgraph in cographs based on their cotree.

Our algorithms can be easily adapted to give back yes-certificates, so we wonder
whether it can be adapted, preserving its time-complexity, to also return
no-certificates.

\begin{problem}
Can we adapt our algorithms to make them linear-time certifying algorithms?
\end{problem}

We also think it is possible to use an approach similar to the one used for
proving \Cref{thm:linearSparse,thm:linearExtendible}, to extend such results to
wider classes of graphs having a simple enough tree representation.
Specifically, we pose the next problem.

\begin{problem}
Can we give a linear time algorithm to find maximum monopolar, maximum unipolar,
and maximum polar subgraphs on $P_4$-tidy or extended $P_4$-laden graphs?
\end{problem}

In the context of matrix partitions, it was shown by Feder, Hell and Xie in
\cite{federENDM28} that, for any pair of fixed nonnegative integers, $s$ and
$k$, there is only a finite number of minimal $(s, k)$-polar obstructions, so
that theoretically there is a polynomial-time brute force algorithm to decide
whether a given graph is an $(s,k)$-polar graph. Moreover, Feder, Hell, Klein
and Motwani present in \cite{federSIAMJDM16} an explicit polynomial-time
algorithm for solving the problem of deciding whether an input graph admits a
fixed sparse-dense partition. Particularly, since both, complete $s$-partite
graphs and $k$-clusters can be recognized in quadratic time, we have that
$(s,k)$-polar graphs can be recognized in $O( \lvert V \rvert ^{4 + 2 \max
\{s,k\} })$-time. The aforementioned results make us wonder if it is possible to
improve the time complexity of such algorithms by restricting the input graph to
some of the graph classes with relatively few induced paths on four vertices.

\begin{problem}
Given arbitrary fixed nonnegative integers $s$ and $k$, can we a give
linear-time algorithm for finding a maximum order $(s,k)$-polar subgraph of a
cograph $G$?
\end{problem}

We also propose to solve the next natural problem which is closely related to
the previous question.

\begin{problem}
Give an efficient algorithm for computing the minimum value of $z = s + k$ such
that an input cograph $G$ is an $(s,k)$-polar graph.
\end{problem}

Finally, we think that an approach similar to the one used here can be helpful
to find the complete family of minimal $2$-polar obstructions for general
graphs, so we pose such problem as a future line of work.

\end{document}